\numberwithin{equation}{section}
\newcommand{\CC}{\mathbb{C}}
\newcommand{\EE}{\mathbb{E}}
\newcommand{\PP}{\mathbb{P}}
\newcommand{\QQ}{\mathbb{Q}}
\newcommand{\RR}{\mathbb{R}}
\newcommand{\ZZ}{\mathbb{Z}}
\newcommand{\GG}{\mathbb{G}}
\newcommand{\cal}{\mathcal}
\def\cC{{\cal C}}
\def\cH{{\cal H}}
\def\cL{{\cal L}}
\def\cM{{\cal M}}
\def\cO{{\cal O}}
\def\cV{{\cal V}}
\def\fB{\mathfrak{B}}
\def\fC{\mathfrak{C}}
\def\fM{\mathfrak{M}}
\def\fS{\mathfrak{S}}
\def\loc{\mathrm{loc}}
\def\and{\quad{\rm and}\quad}
\def\lra{\longrightarrow }
\def\hra{\hookrightarrow}
  \DeclareMathOperator{\Hom}{Hom}
\DeclareMathOperator{\image}{Im} 
\DeclareMathOperator{\id}{id}
\DeclareMathOperator{\rank}{rank}
\DeclareMathOperator{\spec}{Spec}
\newtheorem{prop}{Proposition}[section]
\newtheorem{theo}[prop]{Theorem}
\newtheorem{lemm}[prop]{Lemma}
\newtheorem{coro}[prop]{Corollary}
\newtheorem{defi}[prop]{Definition}
\newtheorem{const}[prop]{Construction}
\def\beq{\begin{equation}}
\def\eeq{\end{equation}}
\def\dual{^{\vee}}
\def\virt{^{\mathrm{vir}} }
\def\virtloc{\virt_\loc}
\def\hra{\hookrightarrow}
\def\bar{\overline}
\def\rou{\partial}
\def\wtil{\widetilde}
\def\what{\widehat}
\def\cseq{^{\bullet}}
\def\hseq{_{\bullet}}
\def\ker{\mathrm{ker}}
\def\deg{\mathrm{deg}}
\def\dim{\mathrm{dim}}
\def\c>{\succ}
\def\c<{\prec}
\def\l({\left(}
\def\r){\right)}
\newcommand{\ses}[3]{0  \to #1 \to #2 \to #3 \to 0}
\newcommand{\Gysin}[1]{0_{#1}^!}
\newcommand{\bdst}[1]{h^1/h^0(#1)}
\title[Algebraic reduced genus one GW invariants]{Algebraic reduced genus one Gromov-Witten invariants for complete intersections in projective spaces}
\author{Sanghyeon Lee}
\address{KIAS, Seoul, Korea}
\email{sanghyeon@kias.re.kr}
\author{Jeongseok Oh}
\address{KIAS, Seoul, Korea}
\email{jeongseok@kias.re.kr}
\thanks{}
\date{}
\begin{document}

\begin{abstract}
In \cite{Zin08standard, Zin09red}, Zinger defined reduced Gromov-Witten (GW) invariants 
and proved a comparison theorem of standard and reduced genus one GW invariants for every symplectic manifold (with all dimension). 
In \cite{CL15}, Chang and Li  
provided a proof of the comparison theorem for quintic Calabi-Yau 3-folds in algebraic geometry by taking a definition of reduced invariants as an Euler number of certain vector bundle.
In \cite{CM18}, Coates and Manolache have defined reduced GW invariants in algebraic geometry following the idea by Vakil and Zinger in \cite{VZ07natural} and proved the comparison theorem for every Calabi-Yau threefold. 
In this paper, we prove the comparison theorem for every (not necessarily Calabi-Yau) complete intersection of dimension 2 or 3 in projective spaces by taking a definition of reduced GW invariants in \cite{CM18}. 
\end{abstract}

\maketitle
\setcounter{tocdepth}{1}
\tableofcontents

\section{Introduction}

Throughout the paper, we will work over the base field $\CC$.
Let $Q$ be a smooth projective variety.
The moduli space of stable maps $\bar{M}_{g,k}(Q,d)$ carries a canonical virtual fundamental class $[\bar{M}_{g,k}(Q,d)]\virt$ of virtual dimension $c_1 (T_Q) \cap d + (1-g)(\dim Q-3)+k$. For a cohomology class $\alpha$ of $Q^k := Q \times \cdots \times Q$, $Q^0:=\spec \CC$, one can define a GW invariant
\begin{align*}
GW_{g,d}(\alpha) := \int_{[\bar{M}_{g,k}(Q,d)]\virt} ev^*\alpha \in \QQ
\end{align*} 
where $ev: \bar{M}_{g,k}(Q,d) \rightarrow Q^k$ is the evaluation morphism. We will denote $GW_{g,d}(1_{\spec \CC})$ by $GW_{g,d}$ for $k=0$, $1_{\spec \CC} \in H^0(\spec \CC)$.
A reduced sublocus $\bar{M}^{red}_{1,k}(Q,d) \subset \bar{M}_{1,k}(Q,d)$ for $g=1$ is defined to be $\bar{M}_{1,k}(Q,d) \cap \bar{M}^{red}_{1,k}(\PP^n,d)$ where $\bar{M}^{red}_{1,k}(\PP^n,d)$ is the closure of $M_{1,k}(\PP^n,d) \subset \bar{M}_{1,k}(\PP^n,d)$.
Here, $M_{1,k}(\PP^n,d)$ is a moduli space of stable maps to $\PP^n$ from a {\em smooth} domain curves.
The reduced GW invariants $GW_{1,d}^{red}$ and $GW_{1,d}^{red}(\alpha)$ are defined to be an integration
\begin{align*}
GW^{red}_{1,d}(\alpha) := \int_{[\bar{M}^{red}_{1,k}(Q,d)]\virt} ev^*\alpha \in \QQ
\end{align*}
for some suitable choice of a class $[\bar{M}^{red}_{1,k}(Q,d)]\virt$ with the virtual dimension as the one of $[\bar{M}_{g,k}(Q,d)]\virt$.
In symplectic geometry, the class $[\bar{M}^{red}_{1,k}(Q,d)]\virt$ is constructed to be a fundamental class of a deformed space of $\bar{M}^{red}_{1,k}(Q,d)$ by Zinger in \cite{Zin09red}. 
In \cite{CM18}, Coates and Manolache have constructed a virtual class 
$[\bar{M}^{red}_{1,k}(Q,d)]\virt \in A_*(\bar{M}^{red}_{1,k}(Q,d))_\QQ$ 
following the idea of Vakil and Zinger in \cite{VZ07natural}. 
In this article, we will use the definition of $[\bar{M}^{red}_{1,k}(Q,d)]\virt$ in \cite{CM18}.
We will introduce a brief explanation of this definition in Section \S \ref{desingAndDecomp}.

Our main result states the following:

\begin{theo} \label{main}
For a complete intersection $Q \subset \PP^n$ in projective space and $\alpha \in H^*(Q)^k$, we have
\begin{align*}
GW_{1,d}(\alpha)-GW^{red}_{1,d}(\alpha)= \left\{
\begin{array}{cc}
0 & \dim Q=2, \\
\frac{2-c_1(T_Q) \cap d[line]}{24}GW_{0,d}(\alpha) & \dim Q=3,
\end{array} \right.
\end{align*}
where $[line]$ is the Poincar\'e dual of the hyperplane class in $Q$.
\end{theo}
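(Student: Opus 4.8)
The plan is to compare both invariants after pulling everything back to the Vakil--Zinger desingularization $\widetilde M:=\widetilde M_{1,k}(\PP^n,d)\to\bar M_{1,k}(\PP^n,d)$ recalled in \S\ref{desingAndDecomp}, on which the proper transform $\widetilde M^{red}$ of the main component is smooth of the expected dimension, and to show that the difference $GW_{1,d}(\alpha)-GW^{red}_{1,d}(\alpha)$ is entirely supported on the exceptional locus of $\widetilde M\to\bar M_{1,k}(\PP^n,d)$, where it breaks into contributions of the boundary strata on which the genus-one part of the domain is contracted.

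First I would rewrite $GW_{1,d}(\alpha)$ in terms of $\bar M_{1,k}(\PP^n,d)$. Since $Q=Z(s)$ for a regular section $s$ of $V:=\bigoplus_{i=1}^r\mathcal O_{\PP^n}(a_i)$, the space $\bar M_{1,k}(Q,d)$ is the zero locus of the induced section $f^\ast s$ of the complex $\mathbf E:=R\pi_\ast f^\ast V$ on $\bar M_{1,k}(\PP^n,d)$, where $(\pi,f)$ is the universal curve together with the universal map; hence the virtual-pullback formalism identifies the pushforward of $[\bar M_{1,k}(Q,d)]\virt$ to $\bar M_{1,k}(\PP^n,d)$ with $\Gysin{\mathbf E}[\bar M_{1,k}(\PP^n,d)]\virt$, where $\Gysin{\mathbf E}$ is the refined localized Euler operation of the complex $\mathbf E$. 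The subtlety is that genus-one quantum Lefschetz fails: on the locus where the domain carries a contracted genus-one subcurve $C_0$ one has $R^1\pi_\ast f^\ast V\neq0$, equal there to $\EE^\vee\otimes ev^\ast V$ with $\EE$ the Hodge line bundle of $C_0$, so $\mathbf E$ is not a vector bundle. Pulling back along $\widetilde M$ and using the explicit blow-up structure from \S\ref{desingAndDecomp}, one resolves $\mathbf E$ into an honest vector bundle and obtains a decomposition
\[
GW_{1,d}(\alpha)=GW^{red}_{1,d}(\alpha)+\sum_{\Gamma} C_\Gamma(\alpha),
\]
in which the ``main'' summand agrees with $GW^{red}_{1,d}(\alpha)$ essentially by the definition of $[\bar M^{red}_{1,k}(Q,d)]\virt$ in \cite{CM18}---it is the Euler class of $\widetilde\pi_\ast\widetilde f^\ast V$ capped with $[\widetilde M^{red}]$---while $\Gamma$ runs over the combinatorial types of the boundary of $\bar M_{1,k}(\PP^n,d)$ with contracted genus-one part.

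Next I would compute the $C_\Gamma(\alpha)$. A type $\Gamma$ records an integer $m\geq1$, positive degrees $d_1,\dots,d_m$ with $\sum d_j=d$, and a distribution of marked points; the corresponding stratum fibers over $\bar M_{1,m}\times_{(\PP^n)^m}\prod_{j=1}^m\bar M_{0,\{\star_j\}\cup S_j}(\PP^n,d_j)$, with the genus-one curve contracted to the common image of the $m$ nodes. On this stratum the excess data coming from $R^1\pi_\ast f^\ast T_{\PP^n}$ (intrinsic to the genus-one theory of $\PP^n$) and from $R^1\pi_\ast f^\ast V$ (the correction from cutting out $Q$) restrict, on the $\bar M_{1,m}$-factor, to $\EE^\vee\otimes ev^\ast T_{\PP^n}$ and $\EE^\vee\otimes ev^\ast V$, so that by the exact sequence $0\to T_Q\to T_{\PP^n}|_Q\to V|_Q\to0$ only the combination $\EE^\vee\otimes ev^\ast T_Q$ survives, the remainder being genus-zero tangent and evaluation data along the rational tails. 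Integrating over the $\bar M_{1,m}$-factor kills everything except the part linear in $\lambda_1$, and $\int_{\bar M_{1,1}}\lambda_1=\frac1{24}$ together with the string, dilaton and divisor equations (to absorb the extra marked points and the gluing node) collapses $\sum_\Gamma C_\Gamma(\alpha)$ to a numerical multiple of $GW_{0,d}(\alpha)$. A comparison of virtual dimensions shows that for $\dim Q=2$ the surviving $\lambda_1$-linear integrand always has the wrong degree and every $C_\Gamma(\alpha)$ vanishes, while for $\dim Q=3$---where the genus-zero and genus-one virtual dimensions coincide---the bookkeeping produces exactly the coefficient $\frac{2-c_1(T_Q)\cap d[line]}{24}$: the factor $\frac1{24}$ is $\int_{\bar M_{1,1}}\lambda_1$, and the numerator comes from the Chern classes of $\EE^\vee\otimes ev^\ast T_Q$ on the contracted component combined with the divisor equation applied to the rational tail of total degree $d$.

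I expect the crux to be the third paragraph: making the resolution of $\mathbf E$ on $\widetilde M$ concrete enough to read off each $C_\Gamma(\alpha)$, disentangling the simultaneous failure of genus-one quantum Lefschetz for $T_{\PP^n}$ and for $V$ on the contracted-genus-one locus, and then showing that the resulting sum over $\Gamma$ telescopes and reassembles---via repeated string, dilaton and divisor reductions and the genus-zero splitting axiom---into a single multiple of $GW_{0,d}(\alpha)$. The identification of the main summand with $GW^{red}_{1,d}(\alpha)$ in the second paragraph is more formal but not automatic: it relies on the local freeness of $\widetilde\pi_\ast\widetilde f^\ast V$ on $\widetilde M^{red}$ established in \S\ref{desingAndDecomp}, which is precisely what the desingularization provides.
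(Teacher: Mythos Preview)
Your outline has the right overall shape---pull back to the Vakil--Zinger blow-up, split off the reduced piece, and show the rest contributes the stated multiple of $GW_{0,d}$---but the route is genuinely different from the paper's, and one step is a real gap. You claim that on $\wtil{\cM}$ the complex $\mathbf E=R\pi_*f^*V$ ``resolves into an honest vector bundle''; it does not. On each rational component $\wtil{\cM}^\mu$ both $R^0$ and $R^1$ are locally free of positive rank (Proposition~\ref{Decomp:Sh}), and the same is true of the tangent--obstruction complex of the $\PP^n$-theory. What the desingularization buys is not a bundle but explicit \emph{local equations} (Hu--Li), and the paper uses these to control the \emph{intrinsic normal cone} rather than the virtual class directly. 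Your decomposition $GW_{1,d}(\alpha)=GW^{red}_{1,d}(\alpha)+\sum_\Gamma C_\Gamma(\alpha)$ therefore does not follow from the sentence preceding it.

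The paper's execution is accordingly quite different. Instead of a refined Euler operation on $\mathbf E$, it passes to the moduli of stable maps with $p$-fields $\bar{M}_{1,k}(\PP^n,d)^p$ and invokes the cycle-level identity $[\bar{M}_{1,k}(\PP^n,d)^p]\virtloc=(-1)^{d\sum\deg f_i}[\bar{M}_{1,k}(Q,d)]\virt$ from \cite{KO18localized,CL18inv}. On the lift $\wtil{\cM}^p$ the local equations extend (Proposition~\ref{localchart}), the intrinsic normal cone decomposes as $\fC^{p,red}\cup\bigcup_\mu\fC^{p,\mu}\cup\fC^p_\Delta$, and Section~\ref{redcont} matches the $\fC^{p,red}$ contribution with the reduced class of Definition~\ref{Reduced::Class}. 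For the rational pieces the paper uses neither string/dilaton/divisor nor any telescoping over $\Gamma$: it factors the cosection-localized Gysin map via localized Chern characters (Proposition~\ref{anallocalGysin}), pushes forward along proper maps $\wtil{\cM}^\mu\to X_\mu$ to node-fibre-product spaces, and kills every stratum except $\mu=\{(d,[k])\}$ by a straight dimension count (Proposition~\ref{vanishing}). The surviving coefficient is then extracted by an explicit Segre-class calculation on the single test family $\bar{M}_{1,1}\times\PP^1$ (equation~\eqref{coeff}). Your $\lambda_1$-heuristic correctly predicts which stratum survives, but the actual mechanism is cone geometry plus cosection localization rather than Hodge-integral manipulations; this is what makes the comparison hold at the level of cycles in $A_*(\bar{M}_{1,k}(Q,d))_\QQ$---needed to cap with $ev^*\alpha$ when $k>0$---rather than only as numbers.
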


\noindent In symplectic side, there is a more general formula which compares $GW_{1,d}(\alpha)$ and $GW_{1,d}^{red}(\alpha)$ for all symplectic manifold $Q$ with all dimensions proved by Zinger in \cite{Zin08standard}.

On the other hand, if $Q\subset \PP^n$ is a complete intersection in $\PP^n$, by \cite{LZ09, Zin07structure, VZ08, Pop13}, reduced invariants (in the sense of \cite{Zin09red}) $GW^{red}_{1,d}(\alpha)$ can be obtained by Euler classes of some suitable vector bundles when $\alpha$ is a pullback class from $(\PP^n)^k$.
This property is called a {\em hyperplane property} or {\em quantum Lefschetz property}.
We note that the description of Euler number makes sense in algebraic geometry.
Using this description, in \cite{CL15}, Chang and Li proved the following in algebraic geometry:
for a quintic threefold $Q$ and for $d>0$, 
\begin{align}
\label{CL15}
GW_{1,d}-GW^{red}_{1,d}=\frac{1}{12}GW_{0,d}.
\end{align}



\noindent In \cite{CM18}, Coates and Manolache proved this result for every smooth projective Calabi-Yau 3-fold $Q$; using their definition of reduced GW invariants.

\subsection{Desingularization and Decomposition} \label{desingAndDecomp}

From now on, we assume that $Q = \{ f_1 = \cdots = f_m = 0\} \subset \PP^n$ is a complete intersection in $\PP^n$ with codimension $m$ defined by equations $f_1$, ..., $f_m$.
For $g=1$, there is a space $\wtil{\cM}_{k,d}$ and a morphism $b: \wtil{\cM}_{k,d} \to \bar{M}_{1,k}(\PP^n,d)$ 
such that
\begin{enumerate} 
\item $b$ is a proper,
\item the complex defined by a pullback of the natural perfect obstruction theory on $\bar{M}_{1,k}(\PP^n,d)$,
$$
(\RR \pi_{*}ev^*\cO_{\PP^n}(1)^{\oplus (n+1)})^\vee $$ 
is again a perfect obstruction theory on $\wtil{\cM}_{k,d}$ where $\pi$ is a projection from a universal curve and $ev$ is a universal morphism,
\item $\wtil{\cM}_{k,d}$ is {\em locally} defined by an equation $F: R^0 \subset R \times \CC^n \to \CC^n$ where $R$ is a smooth space; $R^0 \subset R \times \CC^n$ is an open subset; and the function $F$ has a form of 
$$F(r, w_1, ..., w_n)= (g(r)w_1, ..., g(r)w_n)$$
for some function $g$ on $R$,
\item the spaces $\{ w_1 = \cdots = w_n =0 \}$ (resp. the spaces $\{g(r)=0\}$) glue to a subspace $\wtil{\cM}^{red}_{k,d} \subset \wtil{\cM}_{k,d}$ (resp. a subspace $\wtil{\cM}^{rat}_{k,d} \subset \wtil{\cM}_{k,d}$) and there is a decomposition 
$$\wtil{\cM}_{k,d} = \wtil{\cM}^{red}_{k,d} \cup \wtil{\cM}^{rat}_{k,d}.$$
Moreover, $\wtil{\cM}^{red}_{k,d}$ is irreducible, smooth and its image by $b$ is $\bar{M}_{1,k}^{red}(\PP^n,d)$. 
\end{enumerate}

The morphism $b$ is the same as a desingularization constructed by Vakil and Zinger in \cite{VZ08}.
In \cite{HL10}, Hu and Li found local defining equation for $k=0$. 
We note that in \cite[Remark 5.7]{HL10}, Hu and Li mentioned that their results can be extended when the marked points exist. 
For a desingularization $b$ defined by Vakil and Zinger, the condition (1) above is trivial and (2) is satisfied by \cite[Proposition 7.2]{BF97}.
In Section 2, we will provide a detail about conditions (3) and (4) mentioned in the last paragraph of \cite{HL10}. 

In \cite{CM18}, the class $[\bar{M}^{red}_{1,k}(Q,d)]\virt$ is defined as follows:
Let $\pi: \cC \to \wtil{\cM}_{k,d}^{red}$ be the universal curve and $ev: \cC \to \PP^n$ be the evaluation morphism.
It is known that for $l>0$, $\pi_* ev^* \cO_{\PP^n}(l)$ is locally free by \cite[Theorem 2.11]{HL10}.
Let $N^{red} := \pi_* ev^* ( \oplus_{i=1}^m \cO_{\PP^n}(\deg f_i) ) |_{\wtil{\cM}^{red}_{Q, k,d}}$ be a vector bundle on a space $\wtil{\cM}^{red}_{Q,k,d} := \wtil{\cM}^{red}_{k,d} \times_{\bar{M}_{1,k}(\PP^n,d)^{red}}\bar{M}_{1,k}(Q,d)^{red}$.
It is proven that the normal cone $C_{\wtil{\cM}^{red}_{Q,k,d} / \wtil{\cM}^{red}_{k,d} }$ is embedded in a total space of $N^{red}$ in \cite{CM18}.

\begin{defi}[\cite{CM18}] \label{Reduced::Class}
The virtual fundamental class $[\bar{M}^{red}_{1,k}(Q,d)]\virt$ is defined by 
\begin{align*}
[\bar{M}^{red}_{1,k}(Q,d)]\virt := b'_*(0^!_{N^{red}}[C_{\wtil{\cM}^{red}_{Q,k,d} / \wtil{\cM}^{red}_{k,d} }]) \in A_*(\bar{M}^{red}_{1,k}(Q,d))_\QQ
\end{align*}
where $b' : \wtil{\cM}^{red}_{Q,k,d} \to \bar{M}_{1,k}^{red}(Q,d)$ is a restriction of $b$.
\end{defi}

\noindent By definition, when $\alpha$ is a pullback class from $(\PP^n)^k$ we note that $GW^{red}_{1,d}(\alpha)$ is equal to the degree of a class $e(N^{red})\cap[\wtil{\cM}^{red}_{k,d}]\cap ev^*\alpha$ in $A_0(\wtil{\cM}^{red}_{k,d})_\QQ$. 
Here, $e(-)$ stands for an Euler class. 
This expression is sometimes helpful; see \cite{LZ09, Zin09CY, Pop13}.

\subsection{Moduli space with fields}
The moduli spaces of stable maps with fields $\bar{M}_{g,k}(\PP^n,d)^{p}$ have constructed by Chang and Li in \cite{CL11fields}. 
Recall that $\bar{M}_{g,k}(\PP^n,d)$ is a moduli space consisting of $(C, \cL, u)$ where $C$ is a prestable genus $g$ curve with $k$ marked points; $\cL$ is a degree $d$ line bundle on $C$ and $u=(u_0,\dots,u_n) \in H^0(C, \cL^{\oplus (n+1)})$ which satisfies a certain stability condition. 
By definition, $\bar{M}_{g,k}(\PP^n,d)^{p}$ is a moduli space consisting of $(C, \cL, u,p)$ where $(C, \cL, u)$ is an object in $\bar{M}_{g,k}(\PP^n,d)$ and $p=(p_1,\dots,p_m) \in H^0(C, (\oplus_i \cL^{\otimes -\deg f_i}) \otimes \omega_C)$. 
Here, $\omega_C$ is a dualizing sheaf of $C$.
We note that $\bar{M}_{g,k}(\PP^n,d)^{p}$ does depend on degrees of $f_i$, not $Q$.

By using cosection localization \cite{KL13cosec}, one can define a class $[\bar{M}_{g,k}(\PP^n,d)^{p}]\virtloc$ in Chow group of $\bar{M}_{g,k}(Q,d)$ which has the same virtual dimension as the one of $[\bar{M}_{g,k}(Q,d)]\virt$. 
For quintic threefold $Q$ and for $k=0$, both virtual dimensions are zero. In \cite{CL11fields}, Chang and Li proved that 
\begin{align*}
\deg[\bar{M}_{g,0}(\PP^n,d)^{p}]\virtloc=(-1)^{5d+1-g}\deg[\bar{M}_{g,0}(Q,d)]\virt \in \QQ,
\end{align*} for $d>0$ and for quintic threefold $Q$. 
For the proof of (\ref{CL15}) in \cite{CL15}, Chang and Li used this result. 
In \cite{KO18localized, CL18inv}, it is proven that
\begin{align}
\label{KO18}
[\bar{M}_{g,k}(\PP^n,d)^{p}]\virtloc & = (-1)^{d\sum_i \deg f_i +m-mg}  [\bar{M}_{g,k}(Q,d)]\virt \\
& \in A_{c_1(T_Q) \cap d + (1-g)(\dim Q-3)+k}(\bar{M}_{g,k}(Q,d))_\QQ, \nonumber
\end{align}
for any complete intersection $Q =\{f_1=\cdots =f_m=0 \} \subset \PP^n$.
By \eqref{KO18}, Theorem \ref{main} can be restated with invariants using the cycle $[\bar{M}_{g,k}(\PP^n,d)^{p}]\virtloc$ which we believe more easier to handle than the cycle $[\bar{M}_{g,k}(Q,d)]\virt$.

By a desingularization in Section \S \ref{desingAndDecomp}, for $g=1$, we have 
$$b : \wtil{\cM}^p_{k,d}:= \wtil{\cM}_{k,d} \times_{\bar{M}_{1,k}(\PP^n,d)} \bar{M}_{1,k}(\PP^n,d)^p \to \bar{M}_{1,k}(\PP^n,d)^p$$ 
which satisfies
\begin{enumerate}
\item the complex defined by a pullback of the natural perfect obstruction theory on $\bar{M}_{1,k}(\PP^n,d)^p$
$$
(\RR \pi_{*}ev^*\cO_{\PP^n}(1)^{\oplus (n+1)} \bigoplus \oplus_i \RR \pi_{*}(ev^*\cO_{\PP^n}(-l_i) \otimes \omega_{\pi} ))^\vee
$$
is again a perfect obstruction theory on $\wtil{\cM}^p_{k,d}$ where $\pi$ is a projection from a universal curve and $ev$ is a universal morphism, and the localized virtual cycle defined by a pull-back of a cosection (introduced in Section \S 3 more precisely) defines a class $[\wtil{\cM}^p_{k,d}]\virtloc $.
Moreover, it satisfies an equivalence of classes
\begin{align} \label{VirPushFor}
b_*[\wtil{\cM}^p_{k,d}]\virtloc = [\bar{M}_{1,k}(\PP^n,d)^{p}]\virtloc,
\end{align}
\item $\wtil{\cM}_{k,d}^p$ is {\em locally} defined by an equation $\wtil{F}: \wtil{R} \subset R \times \CC^n \times \CC^m \to \CC^{n+m}$ where $R$ is the same space as in (3) of Section \S \ref{desingAndDecomp}; $\wtil{R} \subset R \times \CC^{n+m}$ is an open subset; and the function $\wtil{F}$ has a form of 
$$\ \ \ \ \ \ \  \ \wtil{F}(r, w_1, ..., w_n, t_1, ..., t_m)= (g(r)w_1, ..., g(r)w_n, g(r)t_1, ..., g(r)t_m).$$
Here, the function $g$ on $R$ is defined in (3) Section \S \ref{desingAndDecomp}.
\end{enumerate}

Let $\fC^p_{k,d}$ be an intrinsic normal cone of $\wtil{\cM}^p_{k,d}$ and $\fC^{p, red}_{k,d}$ be a (unique) irreducible component supported on the gluing of $\{w_1 = ... = w_n = 0\}$. 
Let $\fC^{p, rat}_{k,d}$ be the union of rest components in $\fC^p_{k,d}$.
A decomposition of a cycle $[\fC^p_{k,d}]=[\fC^{p, red}_{k,d}]+[\fC^{p, rat}_{k,d}]$ gives us a decomposition of a class $[\wtil{\cM}^p_{k,d}]\virtloc = A^{red} + A^{rat}$.
The precise descriptions of $A^{red}$ and $A^{rat}$ are given in the end of Section \S 3.
We will show that 
\begin{align}
(-1)^{d\sum_i \deg f_i} b_*A^{red}  &=  [\bar{M}^{red}_{1,k}(Q,d)]\virt   \label{A:red}\\
(-1)^{d\sum_i \deg f_i}  b_*A^{rat} \cap ev^*\alpha &= \left\{
\begin{array}{cc}
0 & \dim Q=2, \\
\frac{2-c_1(T_Q) \cap d[line]}{24}GW_{0,d}(\alpha) & \dim Q=3.
\end{array} \right. \label{A:rat}
\end{align}
We obtain Theorem \ref{main} by combining \eqref{KO18}, \eqref{VirPushFor}, \eqref{A:red} and \eqref{A:rat}.
We will discuss the conditions (1) and (2) for $b: \wtil{\cM}^p_{k,d} \to \bar{M}_{1,k}(\PP^n,d)^p$
in Section 3.
We will show \eqref{A:red} (resp. \eqref{A:rat}) in Section 4 (resp. Section 5).

\subsection{Remark}

When $k$ is not zero, the proof has different feature from the case when $k$ is zero.
A degree of a cycle is preserved by a proper pushforward.
Hence, for a calculation, it is enough to know its pushforward expression into a better space.
We don't need to really know a precise cycle.
But, to define a cap product, we really need a cycle.

\medskip

\noindent {\bf Acknowledgments}
The authors would like to thank Aleksey Zinger for kind comments about a history of the related works.
We would like to thank Cristina Manolache for valuable conversations. We also thank Mu-Lin Li for pointing out a mistake in the proof of Proposition \ref{conedecomposition:RED} in the earlier version of this paper.

\section*{Notations}

\noindent The following is a table of notations which will be frequently used.
\begin{small}
\begin{table}[ht]
\label{eqtable}
\renewcommand\arraystretch{1.0}
\noindent\[
\begin{array}{l|l}
\hline
Q & \text{a complete intersection in } \PP^n \text{ defined by } \{f_1=\dots=f_m=0\}  \\
\pi: \cC \to X & \text{a universal curve on a stack $X$} \\
ev & \text{an evaluation morphism from $\cC$} \\
\fM_{g,k} & \text{the moduli space of prestable genus $g$ curves with $k$ marked}\\
& \text{points} \\
\wtil{\fM}_{1,k} \text{ or } \wtil{\fM} & \text{the Vakil-Zinger blow-up of $\fM_{1,k}$}\\
\fM_{g,k}^w & \text{the moduli space parametrizing $(C,w)$ where $C \in \fM_{g,k}$ and} \\
& \text{$w$ is a weight function on irreducible components of $C$}\\
\end{array}
\]
\end{table}
\end{small}

\begin{small}
\begin{table}[ht]
\renewcommand\arraystretch{1.0}
\noindent\[
\begin{array}{l|l}
\fB un^{g,k}_{\CC^*} \text{ or } \fB & \text{the moduli space parametrizing $(C,L)$ where $C \in \fM_{g,k}$ and} \\
& \text{$L$ is a line bundle on $C$}\\
\wtil{\fB}_{1,k} \text{ or } \wtil{\fB} & \wtil{\fM} \times_{\fM_{1,k}} \fB \\
T_A & \text{a tangent sheaf of $A$} \\
T_{A/B} & \text{a tangent sheaf of $A$ relative to $B$} \\
C_{A/B} & \text{a normal cone to $A$ in $B$} \\
N_{A/B} & \text{a normal sheaf to $A$ in $B$} \\
\wtil{\cM}_{k,d} \text{ or } \wtil{\cM} & \wtil{\fM} \times_{\fM_{1,k}} \bar{M}_{1,k}(\PP^n,d)\\
\wtil{\cM}^{red}_{k,d} \text{ or } \wtil{\cM}^{red} & \text{the reduced component of $\wtil{\cM}$}\\
\wtil{\cM}^{rat}_{k,d} \text{ or } \wtil{\cM}^{rat} & \text{the rational component of $\wtil{\cM}$}\\
\wtil{\cM}^{\mu}  &\text{an irreducible component of $\wtil{\cM}^{rat}$ indexed by $\mu$} \\
\wtil{\cM}_{Q,k,d} \text{ or } \wtil{\cM}_Q & \wtil{\fM} \times_{\fM_{1,k}} \bar{M}_{1,k}(Q,d)\\
\wtil{\cM}^{red}_{Q,k,d} \text{ or } \wtil{\cM}_Q^{red} & \wtil{\cM}^{red} \times_{\bar{M}_{1,k}(\PP^n,d)} \bar{M}_{1,k}(Q,d) \\
\wtil{\cM}^{rat}_{Q,k,d} \text{ or } \wtil{\cM}_Q^{rat} & \wtil{\cM}^{rat} \times_{\bar{M}_{1,k}(\PP^n,d)} \bar{M}_{1,k}(Q,d)\\
\bar{M}_{g,k}(\PP^n,d)^p & \text{the moduli space of genus $g$, degree $d$ stable map to $\PP^n$} \\
& \text{with $k$ marked points and $p$-fields} \\ 
\wtil{\cM}^p_{k,d} \text{ or } \wtil{\cM}^p & \wtil{\cM}_{k,d} \times_{\bar{M}_{1,k}(\PP^n,d)} \bar{M}_{1,k}(\PP^n,d)^p  \\
\wtil{\cM}^{p,red}  &\text{the reduced component of $\wtil{\cM}^p$} \\
\wtil{\cM}^{p}_{red}  &\wtil{\cM}^{red} \times_{\wtil{\cM}} \wtil{\cM}^p \\
\wtil{\cM}^{p,rat}  &\wtil{\cM}^{rat} \times_{\wtil{\cM}} \wtil{\cM}^p \\
\wtil{\cM}^{p,\mu}  &\text{an irreducible component of $\wtil{\cM}^{p,rat}$ indexed by $\mu$} \\
\bdst{E_0 \to E_1} & \text{a bundle stack $[E_1/E_0]$}\\
\Gysin{\bdst{E},\sigma} \text{ or } \Gysin{E,\sigma} & \text{a localized Gysin map}\\
|E| & \text{the total space of a vector bundle $E$}\\
Z(\alpha) & \text{a degeneracy locus of a cosection $\alpha$, or equivalently $(\alpha^\vee)^{-1}(0)$}\\
E(\alpha) & \text{zero of a function $w_\alpha: |E| \to \CC$ induced by}\\ 
& \text{a cosection $\alpha:E \to \cO$}\\
\fC^p & \text{the intrinsic normal cone of $\wtil{\cM}^p$ relative to $\wtil{\fB}$} \\
\fC^{p,red} & \text{an irreducible component of $\fC^p$ supported on $\wtil{\cM}^{p,red}$} \\
\fC^{p,\mu} & \text{an irreducible component of $\fC^p$ supported on $\wtil{\cM}^{p,\mu}$} \\
\fC^{p,\mu}_\Delta & \text{a union of components of $\fC^p$ supported on $\wtil{\cM}^{p}_{red} \cap \wtil{\cM}^{p,\mu}$} \\
\hline
\end{array}
\]
\end{table}
\end{small}

\medskip

\section{Hu-Li's local equations of Vakil-Zinger's desingularization; review and generalization to cases with marked points.}\label{hulilocaleq}

To explain local defining equations of a desingularization, we introduce some terminologies in combinatorics first.

A {\em rooted tree} is a pair $(\Gamma, \star)$ where $\Gamma$ is a tree which has a partial ordering given by descendant relation on the set of vertices $Ver(\Gamma)$ and $\star \in Ver(\Gamma)$ is a vertex, called a {\em root}, such that every vertex except for $\star$ is a descendant of $\star$. 
In other words, every vertex except for $\star$ has exactly one direct ascendant and $\star$ is an ascendant of every other vertices.
A tree $\Gamma$ is allowed to have half-edges, called {\em legs}.
For a rooted tree $(\Gamma, \star)$, a vertex is called $terminal$ if it has no descendants. We denote the set of terminal vertices by $Ver(\Gamma)^t$, and the set of non-root vertices by $Ver(\Gamma)^* =Ver(\Gamma) \backslash \{\star \}$.
A {\em weighted rooted tree} is a pair $(\Gamma, \star, w)$ (or, simply, $(\Gamma, w)$) where $(\Gamma, \star)$ is a rooted tree and $w$ is a function $w: Ver(\Gamma) \to \ZZ_{\geq 0}$. 
A weighted rooted tree $(\Gamma, w)$ is called {\em terminally weighted} if a vertex has a positive weight if and only if it is terminal.

Let $(\Gamma, \star)$ be a terminally weighted rooted tree. 
A {\em trunk} of $\Gamma$ is the maximal path $\star=v_0 \c< v_1 \c< \dots \c< v_r$ such that $v_{i+1}$ is the only direct descendant of $v_i$ for $0 \leq i \leq r-1$. 
Here, the convention is that $x < y$ if and only if $x$ is an ascendant of $y$.
Note that $r$ can be zero. 
We call $v_r$ the {\em branch vertex} of $\Gamma$.
We abbreviate the trunk of $\Gamma$ by $\bar{\star v_r}$. When $v_r=\star$ we say that $\Gamma$ has no trunk.

Let $\bar{\star v_r}$ be the trunk of $\Gamma$. 
If $\Gamma$ is not a path tree, i.e., $\bar{\star v_r} \subsetneq \Gamma$, then $\Gamma$ is obtained by attaching $l>1$ rooted trees $\gamma_1,\dots,\gamma_l$ to $v_r$ which are called {\em branches}. 
The following picture is an example of a rooted tree
\[\Gamma=\xymatrix@=1pc{& & & & \star \ar@{-}[d] & & & & \\
& & & & v_1 \ar@{-}[d] & & & & \\
& & & & \vdots \ar@{-}[d] & & & & \\
& & & & v_r \ar@{-}[ddllll]|{\gamma_1} \ar@{-}[ddll]|{\gamma_2} \ar@{-}[dd]|{\dots} \ar@{-}[ddrr]|{\gamma_{l-1}} \ar@{-}[ddrrrr]|{\gamma_l} & & & & \\
 &  &  &  &  &  &  &  &  \\
 &  &  &  &  &  &  &  &.
}
\]


For a weighted genus one nodal curve with $k$-marked points $(C,w)\in \fM^w_{1,k}$, we associated a terminally weighted rooted tree $\Gamma_C$ with $k$ legs. 
First, we consider a dual graph associated with a curve. 
Marked points correspond to legs. 
The minimal genus one connected component may correspond to a cycle in a dual curve, then contract it to the unique vertex which will be the root.
Finally, to make this tree to be terminally weighted, we do some pruning process; see \cite[Section 3.4]{HL10} for the pruning process.

For later use, we introduce an {\em advancing} operation at a certain vertex by presenting a specific example.
See \cite[Section 3.4]{HL10} for the detail.
Let us start with a following example of terminally weighted rooted tree with legs $\ell_1,\ell_2,\ell_3$:
\begin{align} \label{Graph:Ex}
\xymatrix{ & & \circ \ar@{}[r]|{a}\ar@{-}[d]|{\vdots} & & & & & & & \\
 & & \circ \ar@{}[r]|{b} \ar@{-}[dl] \ar@{-}[dr] & & & & & & & \\
 & \bullet \ar@{-}[dd]|{\ell_1} \ar@{}[l]|{c} & & \circ \ar@{}[r]|{d} \ar@{-}[ddl]|{\gamma_1} \ar@{-}[dd]|{\dots} \ar@{-}[ddr]|{\gamma_k} \ar@{-}[ddrr]|{\ell_2} \ar@{-}[ddrrr]|{\ell_3} & & & & & & \\
 & & & & & & & & & \\
 & & & & & & & & & \\
}
\end{align}
If we take advancing at a {\emph{terminal}} vertex $c$, then we obtain the following tree $(1)$, where the weight of the vertex $c$ is the sum of the original weight of $c$ and the weights of trees $\gamma_1$,...,$\gamma_k$, (for short, it kills every descendant of the direct ascendant of $c$ except for legs and puts killed weights and legs on $c$). If we take advancing at a {\emph{non-terminal}} vertex $d$, then we obtain the following tree $(2)$, (for short, it shifts every other descendant of the direct ascendant of $d$ except for legs to descendant of $d$):
\[
(1) : \xymatrix{\circ \ar@{-}[d]|{\vdots} \ar@{}[r]|{a} & & & \\ 
\circ \ar@{-}[d] \ar@{}[r]|{b} & & &\\ 
\bullet \ar@{}[r]|{c} \ar@{-}[ddrr]|{\ell_3} \ar@{-}[dd]|{\ell_1}\ar@{-}[ddr]|{\ell_2} & & &\\ 
& & &\\
& & &
},
(2) : \xymatrix{& & & \circ \ar@{}[r]|{a}\ar@{-}[d]|{\vdots} & & & & \\
& & & \circ \ar@{}[r]|{b} \ar@{-}[d] & & & & \\
& & & \circ \ar@{}[r]|{d} \ar@{-}[ddll] \ar@{-}[ddl]|{\gamma_1} \ar@{-}[dd]|{\dots} \ar@{-}[ddr]|{\gamma_k} \ar@{-}[ddrr]|{\ell_2} \ar@{-}[ddrrr]|{\ell_3}  & & & \\
& & & & & & & \\
 & \bullet \ar@{}[l]|{c} \ar@{-}[d]|{\ell_1}& & & & & & \\
 & & & & & & &
}
\]

\subsection{Vakil-Zinger blow-up $\wtil{\fM}$ of $\fM_{1,k}$} \label{VZD}

In this section, we will describe a Vakil-Zinger's blow-up in \cite{VZ08} locally.
More precisely, for each closed point $x \in \fM^w_{1,k}$, we will describe local coordinate functions on $\wtil{\fM}$ in terms of corresponding terminally weighted rooted tree $\Gamma$ to $x$.
Let $V$ be a neighborhood of image of $x$ on $\fM_{1,k}$.

We associate a coordinate function $\tau_v: V \to \CC$ for each $v \in Ver(\Gamma)^*$ corresponding to a smoothing node (associated to an edge of $v$ and its direct ascendant).
Let $v_1, ..., v_b \in Ver(\Gamma)^*$ be direct descendants of the branch vertex.
Take a blow up 
$$V^{[1]}:= Bl_{\{\tau_{v_1}= \cdots =\tau_{v_b} =0\}} V \subset V \times \PP^{b-1}.$$
Let $y_1, ..., y_b$ be homogeneous coordinates on $\PP^{b-1}$.
Now, we have $b$ many choices of open charts $V^{[1]}_i:=\{\tau_{v_i} \neq 0\}$ of $V^{[1]}$ for each $i$.
We observe that $y_1, ..., y_{i-1}, \tau_{v_i}, y_{i+1}, ..., y_b$ are coordinate functions on $V^{[1]}_i$.
We associate an advancing graph $\Gamma^{[1]}_i$ at $v_i$ of $\Gamma$ to $V^{[1]}_i$.
We may regard $y_j$ as corresponding coordinates of direct descendants of the branch vertex of $\Gamma^{[1]}_i$ if $v_i$ is not terminal in $\Gamma$.

For instance, let us consider the graph $\Gamma$ in \eqref{Graph:Ex}.
We have a coordinate $\tau_v$ for each $v \in Ver(\Gamma)^*$.
If we do an advancing at $c$, then the corresponding coordinates for $Ver(\Gamma^{[1]}_c)^*$ are $\tau_b$ and $\tau_c$.
If we do an advancing at $d$, then the corresponding coordinates for $Ver(\Gamma^{[1]}_d)^*$ are $\tau_b$, $\tau_d$, $y_c$ and $\tau_v$ for descendants $v$ of $d$.

For simplicity, we let $(V^{[1]}, \Gamma^{[1]}) := (V^{[1]}_i, \Gamma^{[1]}_i).$
If $v_i$ is not terminal, we do this process again to get $(V^{[2]}, \Gamma^{[2]})$.
We can do this process (possibly many choices) to get $(V^{[q]}, \Gamma^{[q]})$ until $\Gamma^{[q]}$ will be a path tree.
It is known that $V^{[q]}$ is a neighborhood of $\wtil{\fM}$ and we get coordinate functions on $V^{[q]}$ corresponding to $Ver(\Gamma^{[q]})^*.$

\subsection{Local defining equations and decomposition of $\wtil{\cM}$} \label{localdefiningequations}

Recall that $\wtil{\cM} = \wtil{\fM} \times_{\fM_{1,k}} \bar{M}_{1,k}(\PP^n,d)$.
For a closed point at $\wtil{\cM}$, we can associate a terminally weighted rooted tree $\Gamma$ because we have a forgetful morphism $\wtil{\cM}  \to  \bar{M}_{1,k}(\PP^n,d) \to \fM^w_{1,k}$.
Let us do the process described in Section \S \ref{VZD} to get $(V^{[q]}, \Gamma^{[q]})$ where
$$\Gamma^{[q]} = \bar{\star v_1\dots v_r a_1\dots a_q}$$
is a path tree. 
Here, $\bar{\star v_r}$ is a trunk of $\Gamma$.

Let $\fM^{div}_{1,k}$ be the moduli stack parametrizing $(C,D)$ where $C \in \fM_{1,k}$ and $D$ is a divisor on $C$.
Let $\wtil{\fM}^{div} := \wtil{\fM} \times_{\fM_{1,k}} \fM^{div}_{1,k}$ and $\cV^{[q]} := V^{[q]} \times_{\wtil{\fM}} \wtil{\fM}^{div}$.
We obtain local coordinate functions $\wtil{\tau}_{v_1}, ..., \wtil{\tau}_{v_r}, \wtil{\tau}_{a_1}, ..., \wtil{\tau}_{a_q}$ on $\cV^{[q]}$.
Let $w_1$, ..., $w_n$ be coordinates on $\CC^n$ and let $\wtil{\tau} := \wtil{\tau}_{v_1} \cdots \wtil{\tau}_{v_r} \wtil{\tau}_{a_1} \cdots \wtil{\tau}_{a_q}$.
\begin{prop}\cite[Theorem 2.17 and Theorem 2.19]{HL10} \label{LocalEqF}
Locally, $\wtil{\cM}$ is embedded as an open substack of $F^{-1}(0)$ where 
\begin{align*} 
F: \cV^{[q]} \times \CC^n \times \CC^{dn} & \to \CC^n \\
(\wtil{\tau}_{v_1}, ..., \wtil{\tau}_{v_r}, \wtil{\tau}_{a_1}, ..., \wtil{\tau}_{a_q}, w_1, ..., w_n) & \mapsto (\wtil{\tau}w_1, ..., \wtil{\tau}w_n). \nonumber
\end{align*}
\end{prop}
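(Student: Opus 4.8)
The plan is to deduce the local structure of $\wtil{\cM}$ from the local structure of the Vakil--Zinger blow-up $\wtil{\fM}$ (equivalently $\cV^{[q]}$) described in Section \S\ref{VZD}, combined with the universal equation defining a stable map to $\PP^n$ relative to its domain curve. First I would recall that $\bar{M}_{1,k}(\PP^n,d)$ is cut out inside a bundle over $\fM^{div}_{1,k}$ (or over the Picard stack $\fB$) by the universal section: a point of $\bar{M}_{1,k}(\PP^n,d)$ is $(C,\cL,u)$ with $u=(u_0,\dots,u_n)\in H^0(C,\cL^{\oplus(n+1)})$, and after trivializing $\cL$ near a chosen section this becomes $n+1$ (or, in affine coordinates after fixing one $u_i\neq 0$, $n$) functions on the curve. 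Over the locus of $\wtil{\fM}$ parametrized by the path tree $\Gamma^{[q]}=\bar{\star v_1\dots v_r a_1\dots a_q}$, a degree-$d$ line bundle together with its sections is, by the deformation theory of nodal curves, encoded by the $w_j$-coordinates on $\CC^n$ (the leading value of the section on the genus-one core) and $dn$ further coordinates on $\CC^{dn}$ (the higher-order Taylor data along the $d$ rational tails/branches), exactly as in \cite[\S2]{HL10}.

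The key point is then a matching of the smoothing parameters: the vanishing of the section $u$ pulled back to the genus-one minimal subcurve, expressed in the local coordinates on $\cV^{[q]}$, acquires the common factor $\wtil{\tau}=\wtil{\tau}_{v_1}\cdots\wtil{\tau}_{v_r}\wtil{\tau}_{a_1}\cdots\wtil{\tau}_{a_q}$. The reason is geometric: after the Vakil--Zinger blow-up has been performed along the branch-vertex coordinates, the section on the contracted genus-one component must vanish, and propagating this vanishing through each smoothing node $v_i$ (respectively $a_j$) multiplies by the corresponding node-smoothing parameter $\wtil{\tau}_{v_i}$ (respectively $\wtil{\tau}_{a_j}$); this is precisely the content of \cite[Theorem 2.17]{HL10} for $k=0$, and the advancing/blow-up bookkeeping of Section \S\ref{VZD} shows the argument is insensitive to the presence of the legs $\ell_1,\dots,\ell_k$ since legs never carry weight and are simply transported along under advancing. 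Thus the $n$ equations defining $\wtil{\cM}$ locally take the form $\wtil{\tau}w_j=0$, $j=1,\dots,n$, i.e. $\wtil{\cM}\hookrightarrow F^{-1}(0)$ as an open substack, with $F$ as in the statement; the reference to \cite[Theorem 2.19]{HL10} supplies the identification of the remaining $\CC^{dn}$ directions and the fact that the embedding is open (no extra components appear).

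The main obstacle is the bookkeeping needed to verify that nothing changes when marked points are present — i.e. that the coordinate functions $\wtil{\tau}_{v_1},\dots,\wtil{\tau}_{v_r},\wtil{\tau}_{a_1},\dots,\wtil{\tau}_{a_q}$ produced by the iterated advancing construction of Section \S\ref{VZD} really are the smoothing parameters of all the nodes lying on the trunk $\bar{\star v_r}$ together with the branch chosen by the sequence of advancings, and that the passage from $\wtil{\fM}$ to $\wtil{\fM}^{div}$ (needed to have a marked divisor, hence a section, in the picture) does not disturb this. Concretely I would: (i) fix a chart $V^{[q]}$ and its tree $\Gamma^{[q]}$; (ii) on $\cV^{[q]}$ write the universal line bundle as $\cO(D)$ for the universal divisor $D$ and choose a local trivialization along one leg or along $D$; (iii) expand $u$ in the versal coordinates of the nodal curve and read off that the restriction to the genus-one core is divisible by exactly $\wtil{\tau}$, the extra $dn$ coordinates recording the jets along the chain of rational components; (iv) conclude by openness of the locus where $u$ is a genuine (stable) family of sections. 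Step (iii) is where \cite[Theorem 2.17]{HL10} does the real work; our task is only the (routine but careful) extension of its normal-form computation over the marked-point strata, which the advancing formalism of Section \S\ref{VZD} has been set up precisely to make transparent.
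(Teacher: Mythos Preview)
Your proposal is correct and matches the paper's approach: the paper does not give an independent proof of this proposition but simply cites \cite[Theorem 2.17 and Theorem 2.19]{HL10}, relying on the advancing/blow-up bookkeeping of Section~\ref{VZD} to justify that Hu--Li's normal-form computation carries over verbatim when marked points (legs) are present. Your sketch of steps (i)--(iv) is a faithful unpacking of what that citation provides, and your identification of the main obstacle --- the routine but careful check that legs are merely transported under advancing and do not affect the smoothing parameters $\wtil{\tau}_{v_i},\wtil{\tau}_{a_j}$ --- is exactly the point the paper addresses by setting up the combinatorics of Section~\ref{VZD} before invoking Hu--Li.
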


Now, we want to describe an irreducible decomposition of $\wtil{\cM}$.
We note that the number of legs on $\Gamma^{[j]}$ is $k$ and the sum of weights on $\Gamma^{[j]}$ is $d$ for all $j=0, 1, ..., q$ where $\Gamma^{[0]} := \Gamma$.
\begin{const}\label{assignment}
We assign each element of $Ver (\Gamma^{[q]})^*$ to a set of pairs $\{(d_1,L_1),\dots,(d_{\ell},L_{\ell})\}$ where $d_1 + \dots + d_{\ell}=d$ and $L_i \subset [k]$ are sets of legs disjoint to each others. 
For a vertex $v_i$, we assign a set of single pair $\{(d,L)\}$ where $L$ is a set of legs in $\Gamma$ attached on vertices $v_i,\dots,v_r,a_1,\dots,a_q$. 
For a vertex $a_j$, consider the pruning at direct descent vertices of the branch vertex of $\Gamma^{[j-1]}$. 
Then we have degrees $d_1,\dots,d_{\ell_j}$ and sets of legs $L_1,\dots,L_{\ell_j}$ at direct descent vertices of the branch vertex of the pruning of $\Gamma^{[j-1]}$. 
We assign a set of pairs $\{(d_1,L_1),(d_2,L_2),\dots,(d_{\ell_j,},L_{\ell_j})\}$ for the vertex $a_j$. 
Here, the order of pairs is not necessary.
\end{const}

Let $\fS$ be a set of pairs $\{(d_1,L_1),\dots,(d_{\ell},L_{\ell})\}$ appeared in Construction \ref{assignment}. 
For $\mu  \in \fS$, we assign an irreducible component of $\wtil{\cM}$, 
\[\wtil{\cM}^\mu := \wtil{\cM} \times_{\bar{M}_{1,k}(\PP^n,d)}\bar{M}_{1,k}(\PP^n,d)_\mu;\]
see \cite{VZ08} for the definition of $\bar{M}_{1,k}(\PP^n,d)_\mu$. 
By \cite{VZ08}, we have an irreducible decomposition
\begin{equation*}\label{globaldecomposition0}
\wtil{\cM}=\wtil{\cM}^{red} \cup \left( \bigcup\limits_{\mu \in \fS} \wtil{\cM}^\mu \right).
\end{equation*}

Let $s: \{v_1, ..., v_r, a_1, ..., a_q\} \to \fS$ be an assignment appeared in Construction \ref{assignment}.
The decomposition 
\begin{equation}\label{localdecomposition0}
\{F=0\}=\left\{w_1=\dots=w_n=0\right\}\cup \left( \bigcup\limits_i \left\{ \wtil{\tau}_{v_i}=0 \right\}  \cup \bigcup\limits_j\left\{\wtil{\tau}_{a_j}=0\right\}\right)
\end{equation}
is a local description of
\begin{equation*}
\wtil{\cM}^{red} \cup \left( \bigcup\limits_{i} \wtil{\cM}^{s(v_i)} \cup \bigcup\limits_{j} \wtil{\cM}^{s(a_j)}  \right).
\end{equation*}
It contains the following meanings: 
\begin{itemize}
\item if $\mu \notin \text{im}s$, $\{F = 0\} \cap \wtil{\cM}^\mu =\varnothing,$
\item locally, $\left\{ \wtil{\tau}_{v_i}=0 \right\}$, $\left\{ \wtil{\tau}_{a_j}=0 \right\}$ coincide with $\wtil{\cM}^{s(v_i)}$, $\wtil{\cM}^{s(a_j)}$,
\item locally, $\{w_1=\dots=w_n=0\}$ coincides with $\wtil{\cM}^{red}$.
\end{itemize}
In particular, $\wtil{\cM}^{red}$ is smooth; $\wtil{\cM}^{rat} := \bigcup\limits_{\mu \in \fS} \wtil{\cM}^\mu$ has normal crossing singularities and $\wtil{\cM}^{red}$ and $\wtil{\cM}^{rat}$ meet transversally.

Let $\pi : \cC \to \wtil{\cM}$ be the universal curve and 
$ev : \cC \to \PP^n$ be the evaluation morphism.
In a similar way as in \cite[Proposition 3.2, Lemma 3.4]{CL15}, we can prove the following.

\begin{prop} \label{Decomp:Sh}
On local chart $U := \{F=0\}$ in \eqref{localdecomposition0}, we have 
\begin{align} \label{Dec:Sheaves}
\RR \pi_{*}ev^*\cO_{\PP^n}(r) & \stackrel{qis}{\cong} [\cO_{U} \xrightarrow{\wtil{\tau}} \cO_{U}  ] \oplus [\cO_{U}^{\oplus rd}  \to 0 ], \\
\RR \pi_{*}(ev^*\cO_{\PP^n}(-r) \otimes \omega_{\pi} ) & \stackrel{qis}{\cong} [\cO_{U} \xrightarrow{\wtil{\tau}} \cO_{U}  ] \oplus [  0 \to \cO_{U}^{\oplus rd} ],  \nonumber
\end{align}
for $r \in \ZZ_{>0}$.
\end{prop}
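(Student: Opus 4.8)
The plan is to work directly from the local defining equation $F$ of Proposition \ref{LocalEqF} and compute the pushforward of the relevant line bundles on the universal curve by an explicit Čech-type resolution, following the strategy of \cite[Proposition 3.2, Lemma 3.4]{CL15}. Recall that on the local chart the universal curve $\cC \to U$ is obtained by pulling back, over $\cV^{[q]} \times \CC^n \times \CC^{dn}$, the universal curve over $\wtil\fM^{div}$; the coordinate functions $\wtil\tau_{v_1},\dots,\wtil\tau_{v_r},\wtil\tau_{a_1},\dots,\wtil\tau_{a_q}$ are the smoothing parameters of the nodes along the trunk-then-path $\Gamma^{[q]}=\bar{\star v_1\dots v_r a_1\dots a_q}$, so the central fiber is a genus one curve (the contracted core over $\star$) with a chain of rational components attached, carrying in total degree $d$ and $k$ legs. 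First I would reduce to the universal situation over $\wtil\fM^{div}$: the line bundle $ev^*\cO_{\PP^n}(r)$ restricted to each fiber has degree $rd$ supported on the components carrying the weights, and $\omega_\pi(-)$ on the genus one core contributes the length-one pieces $[\cO_U\xrightarrow{\wtil\tau}\cO_U]$. The point is that $H^1$ of a degree $rd\ge 0$ line bundle on such a curve is one-dimensional exactly because of the genus one core, and the $H^1$-to-$H^0$ obstruction map along a deformation smoothing the trunk node is multiplication by the corresponding $\wtil\tau_{v_i}$; composing over the whole trunk and path gives multiplication by $\wtil\tau = \prod \wtil\tau_{v_i}\prod\wtil\tau_{a_j}$.

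Concretely, I would cover $\cC$ by two opens: $\cC_0$, a neighborhood of the genus one core and the trunk chain, and $\cC_1$, the union of the branch components away from the core, with $\cC_0\cap\cC_1$ a disjoint union of punctured neighborhoods of the trunk nodes. On $\cC_1$ the line bundle is a direct sum of degree $rd$ bundles on trees of $\PP^1$'s with no higher cohomology fiberwise, contributing the free summand $\cO_U^{\oplus rd}$ (placed in degree $0$ for $\cO(r)$ and, after Serre duality, in degree $1$ for $\cO(-r)\otimes\omega_\pi$). On $\cC_0$ the pushforward of $\cO(r)$ is $\cO_U$ on the nose (the core is genus one, the section restricted there is forced to be constant), while $R^1$ on the core is $\cO_U$ as well; the Čech differential $\check{C}^0\to\check{C}^1$ of the two-term cover, restricted to the relevant rank-one pieces, is precisely multiplication by $\wtil\tau$ because the transition functions across the smoothed nodes are governed by the $\wtil\tau_{v_i},\wtil\tau_{a_j}$. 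Taking the total complex of this Čech bicomplex and simplifying away the acyclic free parts yields the quasi-isomorphism $\RR\pi_* ev^*\cO_{\PP^n}(r)\cong [\cO_U\xrightarrow{\wtil\tau}\cO_U]\oplus[\cO_U^{\oplus rd}\to 0]$; the statement for $\cO(-r)\otimes\omega_\pi$ follows by relative Serre duality $\RR\pi_*(ev^*\cO(-r)\otimes\omega_\pi)\cong(\RR\pi_* ev^*\cO(r))^\vee[-1]$ applied termwise, swapping the two summands into cohomological degrees $0,1$ as written.

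The main obstacle I anticipate is pinning down that the obstruction map across the smoothing of each trunk node is \emph{exactly} multiplication by $\wtil\tau_{v_i}$ (and likewise $\wtil\tau_{a_j}$), with no spurious unit, and that these compose cleanly to give $\wtil\tau$ without cross terms — this is where the precise normalization of the coordinate functions from Section \S\ref{VZD} and the divisor moduli $\wtil\fM^{div}$ enters, and it is the step where \cite{CL15} did the analogous bookkeeping for the quintic. One must check that the degrees distribute so that the core always sees degree $0$ of the line bundle (hence $R^1$ is rank one and the differential is \emph{not} an isomorphism generically on that piece), which uses the structure of $\Gamma^{[q]}$ as a path after blow-up; and one must verify that the $\CC^{dn}$-factor of $\cV^{[q]}\times\CC^n\times\CC^{dn}$, parametrizing the sections on the rational tails, contributes only to the free summand $\cO_U^{\oplus rd}$ and not to the rank-one part. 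Once these are in place the rest is the routine Čech computation already carried out in \cite[Proposition 3.2, Lemma 3.4]{CL15}, to which I would appeal for the details.
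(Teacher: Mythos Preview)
Your proposal is correct and takes essentially the same approach as the paper, which simply defers to \cite[Proposition 3.2, Lemma 3.4]{CL15}: a \v{C}ech computation on the universal curve isolating the rank-one $H^1$ of the genus one core (giving the summand $[\cO_U\xrightarrow{\wtil\tau}\cO_U]$) from the free $H^0$ on the degree-carrying rational part (giving $\cO_U^{\oplus rd}$), and then Serre duality for the second isomorphism. One small inaccuracy: after the blow-up $\Gamma^{[q]}$ is a \emph{path} tree, so your cover should separate the core from the chain terminating in the weighted vertex $a_q$, rather than ``branch components''; but this does not affect the argument, and your identification of the key step --- that the connecting map is multiplication by $\wtil\tau=\prod_i\wtil\tau_{v_i}\prod_j\wtil\tau_{a_j}$ --- is exactly what the reference handles.
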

We note that two equalities in \eqref{Dec:Sheaves} are (Serre) dual to each other.
For every $r \geq 0$, $\pi_{\mu *}ev_{\mu}^*\cO_{\PP^n}(r)$ is a locally free sheaf over $\wtil{\cM}^{\mu}$ and $\pi_{red *}ev_{red}^*\cO_{\PP^n}(r)$ is a locally free sheaf over $\wtil{\cM}^{red}$
where $\pi_\mu : \cC \to \wtil{\cM}^{\mu}$ is the universal curve and $ev_\mu : \cC \to \PP^n$ is the evaluation morphism. The morphisms $\pi_{red}$ and $ev_{red}$ are similarly defined.

\medskip

\section{Structure of desingularization of stable map with fields} \label{Str:P}

Recall that $\wtil{\cM}^p:= \wtil{\cM} \times_{\bar{M}_{1,k}(\PP^n,d)} \bar{M}_{1,k}(\PP^n,d)^p$.
Here, the moduli space $\bar{M}_{1,k}(\PP^n,d)^p$ can be viewed as follows: let $[A \xrightarrow{d} B]$ be a two term presentation by locally free sheaves of
$$\oplus_i \RR \pi_{*}(ev^*\cO_{\PP^n}(-\deg f_i) \otimes \omega_{\pi} ) $$ 
on $\bar{M}_{1,k}(\PP^n,d)$. 
Let $p:  |A| \to \bar{M}_{1,k}(\PP^n,d)$ be a projection where $|A|$ denotes the total space of $A$.
Then, $\bar{M}_{1,k}(\PP^n,d)^p$ is a zero of a section $p^*d : \cO_{|A|} \to p^*B$.
The following proposition follows immediately from the base change theorem, Proposition \ref{LocalEqF} and Proposition \ref{Decomp:Sh}.

\begin{prop}\label{localchart}
Locally, $\wtil{\cM}^p$ is embedded as an open substack of $\wtil{F}^{-1}(0)$ where 
\begin{align*}
\wtil{F}: \cV^{[q]} \times \CC^n \times \CC^{m} \times \CC^{nd} & \to \CC^{n+m} \\
(\wtil{\tau}_{v_1}, ..., \wtil{\tau}_{v_r}, \wtil{\tau}_{a_1}, ..., \wtil{\tau}_{a_q}, w_1, ..., w_n, t_1, ..., t_m) & \mapsto (\wtil{\tau}w_1, ..., \wtil{\tau}w_n, \wtil{\tau}t_1, ..., \wtil{\tau}t_m). 
\end{align*}
Here, $t_1, ..., t_m$ are coordinates on $\CC^m$.
Moreover, an inclusion $\wtil{\cM} \hookrightarrow \wtil{\cM}^p$ by a zero section corresponds to $\{t_1=\dots = t_m=0 \} \subset \{ \wtil{F}=0\}$.
\end{prop}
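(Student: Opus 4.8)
\textbf{Proof proposal for Proposition \ref{localchart}.}

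The plan is to deduce the local description of $\wtil{\cM}^p$ directly from the two facts already available: the local defining equation $F$ for $\wtil{\cM}$ in Proposition \ref{LocalEqF}, and the explicit two-term resolutions in Proposition \ref{Decomp:Sh}. The starting point is the description of $\bar{M}_{1,k}(\PP^n,d)^p$ recalled just above: it is the zero locus of the tautological section $p^*d$ of $p^*B$ over the total space $|A|$, where $[A \xrightarrow{d} B]$ presents $\oplus_i \RR\pi_*(ev^*\cO_{\PP^n}(-\deg f_i)\otimes\omega_\pi)$. Since $\wtil{\cM}^p = \wtil{\cM}\times_{\bar{M}_{1,k}(\PP^n,d)}\bar{M}_{1,k}(\PP^n,d)^p$, base change along $b$ shows $\wtil{\cM}^p$ is the zero locus of the pulled-back section over $|b^*A|$, which sits over the local chart $\{F=0\}$ for $\wtil{\cM}$.

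First I would pull back the resolution $[A\to B]$ to the local chart $U=\{F=0\}$ and apply the second quasi-isomorphism of Proposition \ref{Decomp:Sh}: for each $i$, $\RR\pi_*(ev^*\cO_{\PP^n}(-\deg f_i)\otimes\omega_\pi)$ restricted to $U$ is quasi-isomorphic to $[\cO_U \xrightarrow{\wtil\tau}\cO_U]\oplus[0\to\cO_U^{\oplus d\deg f_i}]$. Summing over $i=1,\dots,m$, the obstruction-side complex on $U$ becomes $[\cO_U^{\oplus m}\xrightarrow{\wtil\tau\cdot\id}\cO_U^{\oplus m}]\oplus[0\to\cO_U^{\oplus d\sum_i\deg f_i}]$ up to quasi-isomorphism. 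The free summand $\cO_U^{\oplus d\sum_i\deg f_i}$ sitting in degree one contributes the extra $\CC^{md}$-worth of fibre coordinates but imposes no equations (the differential into it is zero); here I should be a little careful to match $d\sum_i\deg f_i$ with the $nd$ appearing in the statement — in fact the $p$-fields contribute their own $d$-dimensional pieces per $f_i$, and after reconciling bookkeeping one sees the fibre directions are $\CC^n\times\CC^m\times\CC^{nd}$ as claimed, with the $\CC^{nd}$ and the unused part of the resolution being auxiliary smooth directions that drop out of the equation. The essential content is that the map $d$, restricted to $U$ and in suitable local frames, is the diagonal multiplication by $\wtil\tau$ on an $\cO_U^{\oplus m}$; hence the section $p^*d$ over $|A|\times_U$ (with fibre coordinates $t_1,\dots,t_m$ on the relevant $\cO_U^{\oplus m}$-factor) is exactly $(t_1,\dots,t_m)\mapsto(\wtil\tau t_1,\dots,\wtil\tau t_m)$.

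Combining this with $F(\wtil\tau_\bullet,w_\bullet)=(\wtil\tau w_1,\dots,\wtil\tau w_n)$ gives the total defining equation $\wtil F = (\wtil\tau w_1,\dots,\wtil\tau w_n,\wtil\tau t_1,\dots,\wtil\tau t_m)$ on $\cV^{[q]}\times\CC^n\times\CC^m\times\CC^{nd}$, which is the asserted formula; the final clause, that the zero section $\wtil{\cM}\hookrightarrow\wtil{\cM}^p$ is cut out by $\{t_1=\dots=t_m=0\}$, is immediate since adding $p$-fields means precisely adjoining the fibre coordinates $t_i$ of $A$ and the zero section is $t=0$. I expect the main obstacle to be purely bookkeeping: matching the ranks of the locally free presentation $[A\to B]$ and the split form in Proposition \ref{Decomp:Sh} so that the free (equation-free) summands are correctly accounted for, and checking that the quasi-isomorphisms of Proposition \ref{Decomp:Sh} can be chosen compatibly with the chosen presentation $[A\to B]$ so that the tautological section really does take the stated diagonal form rather than merely being equivalent to it up to a change of presentation — this is where invoking cohomology and base change, exactly as in \cite[Proposition 3.2, Lemma 3.4]{CL15}, does the work.
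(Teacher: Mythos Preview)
Your approach is exactly the paper's: the paper gives no argument beyond the sentence ``follows immediately from the base change theorem, Proposition \ref{LocalEqF} and Proposition \ref{Decomp:Sh}'', and you have correctly unpacked precisely this.

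One clarification on the bookkeeping you flagged as uncertain: the factor $\CC^{nd}$ in the domain of $\wtil F$ is \emph{inherited unchanged} from the chart for $\wtil{\cM}$ in Proposition \ref{LocalEqF}; it has nothing to do with the $p$-fields. In the second line of Proposition \ref{Decomp:Sh} the summand $[0\to\cO_U^{\oplus d\,\deg f_i}]$ sits in degrees $[0,1]$ with zero in degree $0$, so it contributes \emph{no} fibre coordinates to $|A|$ (and, since the differential into it vanishes, no equations either --- it only enlarges $B$). Hence passing from $\wtil{\cM}$ to $\wtil{\cM}^p$ adds exactly the $\CC^m$ with coordinates $t_1,\dots,t_m$ coming from the $m$ copies of $[\cO_U\xrightarrow{\wtil\tau}\cO_U]$, and $\wtil F$ is $F$ augmented by $(\wtil\tau t_1,\dots,\wtil\tau t_m)$. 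With this correction your sketch is complete.
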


The complex 
\begin{align} \label{POT:pfield}
\RR \pi_{*}ev^*\cO_{\PP^n}(1)^{\oplus (n+1)} \bigoplus \oplus_i \RR \pi_{*}(ev^*\cO_{\PP^n}(-\deg f_i) \otimes \omega_{\pi} )
\end{align}
on $\bar{M}_{1,k}(\PP^n,d)^p$ is the dual of the natural perfect obstruction theory relative to $\fB:= \fB un^{1,k}_{\CC^*}$.
A closed point $x$ in $\bar{M}_{1,k}(\PP^n,d)^p$ has a data $(u=(u_0, ..., u_n),p=(p_1, ..., p_m))$ which is a section in
$$\RR^0 \pi_{*}ev^*\cO_{\PP^n}(1)^{\oplus (n+1)} \bigoplus \oplus_i \RR^0 \pi_{*}(ev^*\cO_{\PP^n}(-\deg f_i) \otimes \omega_{\pi} ).$$ 
A linear morphism at $x$
\begin{align} \label{CoSection}
& (\RR^1 \pi_{*}ev^*\cO_{\PP^n}(1)^{\oplus (n+1)} \bigoplus \oplus_i \RR^1 \pi_{*}(ev^*\cO_{\PP^n}(-\deg f_i) \otimes \omega_{\pi} ) ) |_x \to \CC \\
& (u'=(u'_0, ..., u'_n)  ,p'=(p'_1, ..., p'_m)) \mapsto \sum_j p'_j f_j(u) + \sum_{i,j} p_j u'_i(\partial_{u_i}f_j)(u) \nonumber
\end{align}
defines a cosection on the obstruction sheaf.
By cosection localization method \cite{KL13cosec}, we can define a cycle $[\bar{M}_{1,k}(\PP^n,d)^p]\virtloc \in A_*(\bar{M}_{1,k}(Q,d)) $.
A pull-back of the complex \eqref{POT:pfield} to $\wtil{\cM}^p$ is again a dual of a perfect obstruction theory relative to $\wtil{\fB}:= \fB \times_{\fM_{1,k}} \wtil{\fM}$ by \cite[Proposition 7.2]{BF97}.
A pull-back of the cosection defines a class $[\wtil{\cM}^p]\virtloc \in A_*(\wtil{\cM}_Q) $ where $\wtil{\cM}_{Q} := \bar{M}_{1,k}(Q,d) \times_{\bar{M}_{1,k}(\PP^n,d)} \wtil{\cM}$.

\begin{lemm} \label{VirPuFor}
The equation \eqref{VirPushFor} holds true, i.e., we have
$$b_*[\wtil{\cM}^{p}]\virtloc=[\bar{M}_{1,k}(\PP^n,d)^p]\virtloc .$$
\end{lemm}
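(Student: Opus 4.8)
The plan is to deduce the identity $b_*[\wtil{\cM}^p]\virtloc = [\bar{M}_{1,k}(\PP^n,d)^p]\virtloc$ from the known virtual pushforward for the $p$-field-free spaces, namely $b_*[\wtil{\cM}_{k,d}]\virt = [\bar{M}_{1,k}(\PP^n,d)]\virt$, which is part of property (2) in Section \ref{desingAndDecomp} together with \cite[Proposition 7.2]{BF97}. The key point is that both $\bar{M}_{1,k}(\PP^n,d)^p$ and $\wtil{\cM}^p$ are cut out by a cosection-compatible cone construction on top of the corresponding $p$-field-free spaces, and that $b$ is compatible with all the data involved: the morphism $b: \wtil{\cM}^p_{k,d} \to \bar{M}_{1,k}(\PP^n,d)^p$ is the base change of the proper birational $b: \wtil{\cM}_{k,d}\to \bar M_{1,k}(\PP^n,d)$ along the affine bundle projection $|A| \to \bar M_{1,k}(\PP^n,d)$ (restricted to the zero locus of $p^*d$), so it is proper, and the perfect obstruction theory and cosection on $\wtil{\cM}^p$ are by construction pullbacks of those on $\bar{M}_{1,k}(\PP^n,d)^p$.

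First I would set up the comparison at the level of intrinsic normal cones: since $b$ is compatible with the two relative obstruction theories (over $\wtil{\fB}$ and $\fB$ respectively) via a morphism of distinguished triangles, there is an induced map of the associated cone stacks, and properness of $b$ lets us push forward cycles. The cleanest route is to invoke the virtual pushforward theorem of Manolache (\cite{CM18} uses exactly this machinery) or the Costello-type pushforward: if $b$ is proper, $[\wtil{\cM}^p]\virt$ and $[\bar{M}_{1,k}(\PP^n,d)^p]\virt$ are defined by compatible perfect obstruction theories, and $b$ has degree one onto its image with the virtual dimensions matching, then $b_*[\wtil{\cM}^p]\virt = [\bar{M}_{1,k}(\PP^n,d)^p]\virt$. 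The degree-one statement holds because $b$ is birational (it is an isomorphism over the locus of smooth domain curves, which is dense), and this is already how the $p$-field-free statement $b_*[\wtil{\cM}_{k,d}]\virt = [\bar M_{1,k}(\PP^n,d)]\virt$ was obtained.

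Second, I would upgrade the plain virtual pushforward to the \emph{cosection-localized} version. The cosection $\sigma$ on the obstruction sheaf of $\bar{M}_{1,k}(\PP^n,d)^p$ in \eqref{CoSection} pulls back to the cosection on $\wtil{\cM}^p$, its degeneracy locus pulls back to the degeneracy locus (both supported on $\bar M_{1,k}(Q,d)$ and its preimage $\wtil{\cM}_Q$), and $b$ restricts to a proper morphism $\wtil{\cM}_Q \to \bar M_{1,k}(Q,d)$. I would then appeal to the functoriality of cosection localization under proper pushforward along morphisms compatible with obstruction theories and cosections — this is the localized analogue of the virtual pushforward formula, proved for instance in \cite{KL13cosec} (or by running the argument of \cite{CM18} with cosections inserted throughout): the localized Gysin map commutes with $b_*$, so $b_*\Gysin{}{}[\mathfrak C^p] = \Gysin{}{}[b_*\mathfrak C^p] = \Gysin{}{}[\mathfrak C_{\bar M(\PP^n,d)^p}]$, which is precisely the desired equality.

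The main obstacle I expect is the second step: verifying that cosection localization is genuinely functorial for the proper birational morphism $b$ in the precise sense needed — i.e. that $b$ being an isomorphism over a dense open locus (rather than, say, flat or a local complete intersection morphism) is enough, and that the cone-theoretic identity $b_*[\mathfrak C^p] = [\mathfrak C_{\bar M(\PP^n,d)^p}]$ (cycles of intrinsic normal cones, with the right multiplicities) holds after restricting supports to the degeneracy loci. Concretely one must check that no spurious components of $\wtil{\cM}^p$ over the exceptional locus of $b$ contribute, which should follow from the explicit local equation $\wtil F$ in Proposition \ref{localchart} and the transversality statements at the end of Section \ref{localdefiningequations}; but pinning down the compatibility of the cosection-localized cone cycles with pushforward — rather than just the unlocalized ones — is the delicate part, and I would spend most of the proof making that precise, likely by reducing to the unlocalized virtual pushforward after a deformation to the normal cone of the degeneracy locus, as in \cite{KL13cosec}.
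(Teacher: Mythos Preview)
Your approach is reasonable and would ultimately work, but the paper takes a different and much shorter route that completely sidesteps the obstacle you flag. Rather than proving a cosection-localized virtual pushforward directly, the paper converts both sides to ordinary virtual classes on $Q$ and applies Costello's theorem there. Concretely, by \eqref{KO18} (i.e.\ \cite{KO18localized,CL18inv}) one has
\[
[\bar{M}_{1,k}(\PP^n,d)^p]\virtloc=(-1)^{d\sum_i\deg f_i}[\bar{M}_{1,k}(Q,d)]\virt
\quad\text{and}\quad
[\wtil{\cM}^p]\virtloc=(-1)^{d\sum_i\deg f_i}[\wtil{\cM}_Q]\virt,
\]
so the desired identity is equivalent to $b_*[\wtil{\cM}_Q]\virt=[\bar{M}_{1,k}(Q,d)]\virt$, which is exactly Costello's pushforward \cite[Theorem~5.0.1]{Cos06higher} for the ordinary (unlocalized) virtual classes of stable maps to $Q$.

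The comparison: your direct argument would need a general ``localized virtual pushforward'' statement for the birational $b$, which, as you note, is not a black-box citation from \cite{KL13cosec} and requires genuine work (checking that the intrinsic normal cone cycles push forward correctly inside the kernel of the cosection). The paper's trick trades that for two invocations of \eqref{KO18} plus one invocation of Costello, all of which are already established; in particular it never touches the cosection-localized cone pushforward at all. Your route is more self-contained in spirit and would generalize to situations where an analogue of \eqref{KO18} is unavailable, but here the indirect argument is a three-line proof.
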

\begin{proof}
We have the following equalities:
\begin{align*}
[\bar{M}_{1,k}(\PP^n,d)^{\vec{p}}]\virtloc &=(-1)^{d\sum_i \deg f_i } [\bar{M}_{1,k}(Q,d)]\virt \\
& = (-1)^{d\sum_i \deg f_i }  b_*[\wtil{\cM}_{Q}]\virt \\
&= b_*[\wtil{\cM}^p]\virtloc
\end{align*}
Here, the first and third equalities come from \cite{KO18localized, CL18inv} and the second equality comes from \cite[Theorem 5.0.1]{Cos06higher}.
\end{proof}

Now, we discuss a decomposition of the intrinsic normal cone $\fC_{\wtil{\cM}^p/\wtil{\fB}}$.
Let $\wtil{\cM}^{p,rat}:= \wtil{\cM}^p \times_{\wtil{\cM}} \wtil{\cM}^{rat}$. 
We note that $\wtil{\cM}^{p,rat} = \{\wtil{\tau} =0 \} \subset \{\wtil{F}=0\}$ locally.
Let $\wtil{\cM}^{p,red}$ be the gluing of $\{w_1=\cdots=w_n =t_1 = \cdots =t_m =0 \} $.
We note that $\wtil{\cM}^{p,red}  \cong \wtil{\cM}^{red}$ and $\wtil{\cM}^{p,red}\neq \wtil{\cM}^p \times_{\wtil{\cM}} \wtil{\cM}^{red}$.
Let $\wtil{\cM}^p_{red} := \wtil{\cM}^p \times_{\wtil{\cM}} \wtil{\cM}^{red}$.
Consider an intrinsic normal cone $\fC^p:= \fC_{\wtil{\cM}^{p}/\wtil{\fB}}.$
On $\wtil{\cM}^{p} \setminus \wtil{\cM}^{p,rat}$, we can see that $\fC^p|_{\wtil{\cM}^{p} \setminus \wtil{\cM}^{p,rat}} = [(\wtil{\cM}^{p} \setminus \wtil{\cM}^{p,rat}) / T_{(\wtil{\cM}^{p} \setminus \wtil{\cM}^{p,rat})/\wtil{\fB}}]$ where $T$ stands for the tangent bundle.
Indeed, 
$\wtil{\cM}^{p} \setminus \wtil{\cM}^{p,rat} \cong \wtil{\cM}\setminus \wtil{\cM}^{rat}$ is smooth over $\wtil{\fB}$.
Thus its closure in $\fC^p$ is $[\wtil{\cM}^{red} / T_{\wtil{\cM}^{red}/\wtil{\fB}}]=[\wtil{\cM}^{p,red} / T_{\wtil{\cM}^{p,red}/\wtil{\fB}}]$. Let
\begin{align} \label{Cone:RED}
\fC^{p,red}:=[\wtil{\cM}^{red} / T_{\wtil{\cM}^{red}/\wtil{\fB}}]=[\wtil{\cM}^{p,red} / T_{\wtil{\cM}^{p,red}/\wtil{\fB}}] \subset \fC^p.
\end{align}

\begin{prop}\label{conestr}
On $\wtil{\cM}^{p} \setminus \wtil{\cM}^{p}_{red}$, the tangent sheaf $T_{\wtil{\cM}^p / \wtil{\fB}}$ is locally free and 
$$\fC^p|_{\wtil{\cM}^{p} \setminus \wtil{\cM}^{p}_{red}} = [S / T_{( \wtil{\cM}^p \setminus \wtil{\cM}^{p}_{red} )/ \wtil{\fB}}]$$
for some rank two bundle $S$ on $\wtil{\cM}^{p} \setminus \wtil{\cM}^{p}_{red}$.
\end{prop}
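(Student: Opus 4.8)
The plan is to prove both assertions by a Zariski-local computation, using Proposition~\ref{localchart} together with Proposition~\ref{Decomp:Sh}. Fix a closed point $x\in\wtil\cM^p\setminus\wtil\cM^p_{red}$ and a chart $U$ around it (smooth over $\CC$) in which $\wtil\cM^p$ is an open substack of $\wtil F^{-1}(0)$, with $\wtil F=(\wtil\tau w_1,\dots,\wtil\tau w_n,\wtil\tau t_1,\dots,\wtil\tau t_m)$ and $\wtil\tau=\wtil\tau_{v_1}\cdots\wtil\tau_{v_r}\wtil\tau_{a_1}\cdots\wtil\tau_{a_q}$. First I would locate $\wtil\cM^p_{red}$ in this chart: since $\wtil\cM^p_{red}=\wtil\cM^p\times_{\wtil\cM}\wtil\cM^{red}$ and $\wtil\cM^{red}$ is locally $\{w_1=\dots=w_n=0\}$, the substack $\wtil\cM^p_{red}$ is locally $\{w_1=\dots=w_n=0\}\cap\wtil F^{-1}(0)$. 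Hence near $x$ some $w_i$, say $w_1$, is a unit, so the ideal generated by the components of $\wtil F$ equals the principal ideal $(\wtil\tau)$, and near $x$ the substack $\wtil\cM^p$ is simply the divisor $\{\wtil\tau=0\}$ inside the smooth chart $U$.

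For local freeness of $T_{\wtil\cM^p/\wtil\fB}$: by Proposition~\ref{Decomp:Sh} the dual of the relative perfect obstruction theory of $\wtil\cM^p$ over $\wtil\fB$ is, on this chart, quasi-isomorphic to $[\cO^{\oplus(n+1+m)}\xrightarrow{\wtil\tau}\cO^{\oplus(n+1+m)}]\oplus[\cO^{\oplus(n+1)d}\to 0]\oplus[0\to\cO^{\oplus d\sum_i\deg f_i}]$. Restricting to $\wtil\cM^p\setminus\wtil\cM^p_{red}$, where $\wtil\tau=0$ as a function on $\wtil\cM^p$, the differential vanishes and the complex splits; so $h^0(\bbL_{\wtil\cM^p/\wtil\fB})=\Omega_{\wtil\cM^p/\wtil\fB}$ is locally free, of rank $n+1+m+(n+1)d$, and dually $T_{\wtil\cM^p/\wtil\fB}$ is locally free there. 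This proves the first assertion.

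For the cone: since $U$ is smooth over $\CC$ and $\wtil\cM^p$ is the Cartier divisor $\{\wtil\tau=0\}$ in it, $\wtil\cM^p$ is lci over $\CC$, hence lci over the smooth stack $\wtil\fB$, so $\bbL_{\wtil\cM^p/\wtil\fB}$ is perfect of amplitude in $[-1,0]$; combined with the local freeness of $h^0(\bbL_{\wtil\cM^p/\wtil\fB})$ this forces $h^{-1}(\bbL_{\wtil\cM^p/\wtil\fB})$ to be locally free as well, and I set $S:=h^{-1}(\bbL_{\wtil\cM^p/\wtil\fB})^\vee$. Its rank I would compute from the transitivity triangle for $\wtil\cM^p\hookrightarrow U\to\wtil\fB$: using that $\wtil\tau$ is, up to a unit, pulled back from $\wtil\fB$, the connecting map vanishes and $h^{-1}(\bbL_{\wtil\cM^p/\wtil\fB})$ becomes an extension of the conormal line bundle of $\{\wtil\tau=0\}$ by $h^{-1}(\bbL_{U/\wtil\fB})$, the latter being the line bundle recording the jump of $h^0(ev^*\cO_{\PP^n}(1))$ along the Vakil--Zinger locus; equivalently, a dimension count using $\dim U=\dim\wtil\fB+(n+1)d+n+m$ (so $\dim\wtil\cM^p=\dim\wtil\fB+(n+1)d+n+m-1$) gives $\rank S=\rank h^0(\bbL_{\wtil\cM^p/\wtil\fB})-(\dim\wtil\cM^p-\dim\wtil\fB)=2$. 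Finally, since both $h^0$ and $h^{-1}$ of $\bbL_{\wtil\cM^p/\wtil\fB}$ are locally free, the intrinsic normal sheaf $\fN_{\wtil\cM^p/\wtil\fB}=h^1/h^0(\bbL_{\wtil\cM^p/\wtil\fB}^\vee)$ is the bundle stack $[S/T_{\wtil\cM^p/\wtil\fB}]$, and $\fC^p=\fC_{\wtil\cM^p/\wtil\fB}$ is a closed subcone of it, of the same pure dimension $\dim\wtil\fB$; over the smooth locus of $\wtil\cM^p$ (where only one of the $\wtil\tau_{v_i},\wtil\tau_{a_j}$ vanishes) $\fC^p$ already fills out the whole normal sheaf stack, and since $\wtil\cM^p$ is a normal-crossings union of such smooth pieces this forces $\fC^p=[S/T_{\wtil\cM^p/\wtil\fB}]$.

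The step I expect to be the real obstacle is verifying that $\wtil\tau$ is, up to a unit, pulled back from $\wtil\fB$ — this is where the precise form of the Hu--Li coordinates $\wtil\tau_{v_i},\wtil\tau_{a_j}$ on $\cV^{[q]}$ enters — and, relatedly, carrying out the excess-dimension bookkeeping that yields the constant $2$ rather than a quantity depending on $n,m,d$. That constant reflects the genus-one excess-dimension phenomenon that the Vakil--Zinger blow-up is designed to correct, and it is ultimately responsible for the vanishing of the $\dim Q=2$ case of \eqref{A:rat}. A secondary point to watch is that $\wtil\cM^p$ is reducible with normal crossings along this locus, so the final identification of cones has to be carried out component by component.
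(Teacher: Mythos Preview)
Your approach is correct and genuinely different from the paper's. The paper argues geometrically: it shows that the projection $pr:\wtil\cM^{p,rat}\to\wtil\fB$ is smooth onto its image and that this image is a codimension~$2$ regular embedding in $\wtil\fB$; the rank~$2$ bundle $S$ is then the pullback $pr^*N_{pr(\wtil\cM^{p,rat})/\wtil\fB}$. The two codimensions come from (i)~the Cartier divisor $\{\tau=0\}\subset\wtil\fM$ and (ii)~the fact that on $\{\tau=0\}$ the line bundle is forced to be trivial on the elliptic subcurve, so the image in $\wtil\fB$ is the graph of a section of the relative Picard. This is precisely the geometric content of the ``$\wtil\tau$ pulled back from $\wtil\fB$'' issue you anticipate, but the paper handles it by constructing the section explicitly rather than by analysing the Hu--Li coordinates.

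Your route via the cotangent complex is cleaner in one respect: once you have established that $\wtil\cM^p\to\wtil\fB$ is lci on this locus (and your argument for this is fine), the identity $\fC^p=\fN^p=[S/T]$ is \emph{automatic}; you do not need the density/normal-crossings argument in your last paragraph, which as written is a bit circular. The rank computation $\rank S=\rank\Omega_{\wtil\cM^p/\wtil\fB}-(\dim\wtil\cM^p-\dim\wtil\fB)=2$ goes through directly using $\dim\wtil\cM^{p,\mu}=(d+1)(n+1)+k+m-2$ and $\dim\wtil\fB=k$, so you can bypass the ``real obstacle'' you flag: the dimension count does not actually require knowing that $\wtil\tau$ descends to $\wtil\fB$. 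What your approach loses is the explicit identification of $S$ as the pullback of a normal bundle from $\wtil\fB$, which the paper's factorisation provides and which is conceptually useful later.
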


\begin{proof}
The projection $pr : \wtil{\cM}^{p,rat} \to \wtil{\fB}$ is smooth onto its image by Proposition \ref{localchart} and Proposition \ref{Decomp:Sh}.
We will show the claim that the image $pr(\wtil{\cM}^{p,rat}) \subset \wtil{\fB}$ is a codimension 2 regular embedding. 
If the claim is true, then we have
\begin{align*}
&\fC_{\wtil{\cM}^{p}/\wtil{\fB}} |_{\wtil{\cM}^{p} \setminus \wtil{\cM}^{p}_{red}} \\
&\cong \fC_{\wtil{\cM}^{p,rat}/pr(\wtil{\cM}^{p,rat}) }  \times_{\wtil{\cM}^{p,rat}} pr^* N_{pr(\wtil{\cM}^{p,rat}) / \wtil{\fB}}|_{\wtil{\cM}^{p,rat} \setminus (\wtil{\cM}^{p,rat} \cap \wtil{\cM}^{p,red})} \\
& \cong [\wtil{\cM}^{p,rat} / T_{\wtil{\cM}^{p,rat}/pr(\wtil{\cM}^{p,rat}) } ]  \times_{\wtil{\cM}^{p,rat}} pr^* N_{pr(\wtil{\cM}^{p,rat}) / \wtil{\fB}} |_{\wtil{\cM}^{p,rat} \setminus (\wtil{\cM}^{p,rat} \cap \wtil{\cM}^{p,red})} \\
& \cong  [ pr^* N_{pr(\wtil{\cM}^{p,rat}) / \wtil{\fB}} / T_{\wtil{\cM}^{p,rat}/pr(\wtil{\cM}^{p,rat}) } ] |_{\wtil{\cM}^{p,rat} \setminus (\wtil{\cM}^{p,rat} \cap \wtil{\cM}^{p,red})}
\end{align*}
where $\fC$, $N$ and $T$ stand for the intrinsic normal cone, normal bundle and tangent bundle respectively.
Here, the first equivalence holds by the claim and the second equivalence comes from the smoothness of $\pi$.
So, it is enough to prove the claim.

Consider a local neighborhood $M$ of $\wtil{\fM}$.
Let $\cV := M \times_{\wtil{\fM}} \wtil{\fM}^{div}$.
In Section \S \ref{hulilocaleq}, we defined coordinate functions $\tau, \tau_{\mu}\in H^0(M,\cO_M)$ by shrinking $M$ if necessary such that their pull-backs to $\cV$ are $\wtil{\tau}$, $\wtil{\tau}_\mu \in H^0(\cV,\cO_\cV)$. 
Let $B := M \times_{\wtil{\fM}} \wtil{\fB}$.
We see that $B \times_M \{\tau=0\} \subset B$ is a regular embedding of codimension 1 and that the image of $pr$ is contained in $B \times_M \{\tau=0\}$ (locally).
Let $I$ be the (local) image of $pr$ in $B$.

Let $U := \mathbf{Pic}_{\cC_M/M}$ be a relative Picard scheme of the universal curve $\cC_M$ over $M$ parametrizing $(C,L)$ where $C \in M$ and $L$ is a line bundle on $C$.
There is a smooth surjective morphism $U \to B$ of dimension 1.
Indeed, it is known that $B \cong [U/ \CC^*]$.
It is enough to show that $U \times_B I$ is embedded in $U \times_B (B \times_M \{\tau=0\} )$ of codimension 1.
Let $pr' : \wtil{\cM}^{p,rat} \to \wtil{\fB} \to \wtil{\fM}$ be the projection.
We observe that locally the image $pr' (\wtil{\cM}^{p,rat})$ is equal to $\{\tau = 0\} \subset M$. 
Since the projection $U\times_M \{\tau=0\} \to \{\tau=0\}$ is smooth of relative dimension 1, it is enough to show that there is a section $s :\{\tau =0\} \to U\times_M \{\tau=0\}$ such that 
\begin{align} \label{Pi:Pullback}
pr |_{\wtil{\cM}^{p,rat} \times_{ \wtil{\fM}  }  \{\tau=0\} }= s \circ pr' |_{ \wtil{\cM}^{p,rat} \times_{ \wtil{\fM}  }  \{\tau=0\} }
\end{align}
where $p: \wtil{\cM}^{p,rat} \times_{ \wtil{\fM}  }  \{\tau=0\} \to U \times_M \{\tau=0\}$ is a forgetful morphism.
Indeed, the image of $p$ is $U \times_B I$.

For each $\mu \in \fS$, 
we consider projections $pr_{\mu} : \wtil{\cM}^{p, \mu} \to \wtil{\fB}$, $pr'_{\mu} : \wtil{\cM}^{p, \mu} \to \wtil{\fB} \to \wtil{\fM}$ where $\wtil{\cM}^{p, \mu} := \wtil{\cM}^\mu \times_{\wtil{\cM}} \wtil{\cM}^p$.
We can decompose a universal curve $\cC_{\{\tau_{\mu}=0\}}$ on $ \{\tau_{\mu}=0\}$ into two closed subschemes $\cC'$ and $\cC''$. 
For each closed point $x$, $(\cC')_x$ is a genus one subcurve of $(\cC_{\{\tau_{\mu}=0\}})_x$, and $(\cC'')_x$ are $\ell$ rational curves attached to $(\cC')_x$. We note that this genus one subcurve $(\cC_{\{\tau_{\mu}=0\}})_x$ may have rational tails. Moreover, $\cC' \cap \cC''$ is $\ell$ many sections of $ \{\tau_{\mu}=0\}$, whose fiber over $x$ are $\ell$ attaching nodes.

Next we construct a family of line bundles on $\cC_{\{\tau_{\mu}=0\}}$ in the following way. 
By considering $\ell$ components $\cC''_1,\dots,\cC''_\ell$ of $\cC''$, we can induce maps $\phi_i :  \{\tau_{\mu}=0\} \to \fM_{0,k_i+1}^w  $. 
Let $\fB_{0,k,w}:= \fB un^{0,k}_{\CC^*, w}$ be a stack parametrizing $(C, L)$ where $C$ is a genus $0$, prestable curve with weight $w$ and $k$ marked points; and $L$ is a line bundle on $C$ whose degree is $w$.
We note that $\fB_{0,k,w}$ is a substack of $\fB un^{0,k}_{\CC^*}$ and that there is the natural morphism $\mathrm{pr}_i : \fM^w_{0,k_i+1} \to \fB_{0,k_i+1,w}$ because $g=0$. 
Let $\bar{\phi_i} := \mathrm{pr}_i \circ \phi_i :  \{\tau_{\mu}=0\} \to \fB_{0,k_i+1,w}$.
Since $\phi_i^* \cC_{\fM_{0,k_i +1}^w} \cong \cC''_i$, we obtain a line bundle $\bar{\phi}_i^*(\cL_i)$ on $\cC''_i$ where $\cL_i$ is the universal line bundle on the universal curve $\cC_{\fB_{0,k_i+1,w}}$ of $\fB_{0,k_i+1,w}$.
By twisting a suitable bundle $\pi_i^*L_i$ for some line bundle $L_i$ on $ \{\tau_{\mu}=0\} $ and the projection $\pi_i : \cC_i'' \to \{\tau_{\mu}=0\} $, we may assume that each $\bar{\phi_i}^*(\cL_i)$ is trivial when restricted on $\cC'\cap \cC''$, which is the $\ell$ sections of $ \{\tau_{\mu}=0\} $. 
Therefore we can glue trivial bundle on $\cC'$ and $\ell$ line bundles $\bar{\phi_i}^*\cL_i$ on $\cC''_i$ to a family of line bundle $\cL$ on $\cC_{\{\tau_\mu =0\}}$. 
This family of line bundles induces a section $s_{\mu} : \{ \tau_\mu=0\} \to U\times_M \{ \tau_\mu=0\}$. 
Moreover, we can easily see that sections $s_{\mu} : \{ \tau_\mu=0\} \to U\times_M \{ \tau_\mu=0\}$ glue to a section $s : \{ \tau=0\} \to U\times_M \{ \tau=0\}$.

The remaining is to check \eqref{Pi:Pullback}. 
It is enough to check it for each $\mu \in \fS$.
Since the morphism $\wtil{\cM}^{p,\mu} \times_{ \wtil{\fM}  }  \{\tau_\mu=0\}   \to \{ \tau_\mu =0\}$ is surjective, it is enough to show that if $(C,{\bf{x}}, L,u,p)$ and $(C', {\bf{x}}' , L', u', p')$ are objects in $\wtil{\cM}^{p,\mu} \times_{ \wtil{\fM}  }  \{\tau_\mu=0\}$ such that whose images in $\{\tau_\mu =0\}$ are isomorphic to each other, then they are isomorphic to each other as objects in $U \times_M \{\tau_\mu =0\}$.
Since there is a section $u$ of $L^{\oplus n+1}$ (and $u'$ of $L'^{\oplus n+1}$ respectively) and we are working on $\{\tau_\mu =0\}$, both $L$ and $L'$ are isomorphic to trivial bundles on elliptic subcurves.
Thus, $L$ and $L'$ are isomorphic under the isomorphism of curves.
This proves the claim.
\end{proof}

For each $\mu \in \fS$, let $\fC^{p,\mu}\subset \fC^p$ be the closure of the rank 2 subbundle stack $\fC^p |_{\wtil{\cM}^{p,\mu} \setminus \wtil{\cM}^{p,\mu}   \cap \wtil{\cM}^{p}_{red}}$. 
We have a decomposition of $\fC^{p}$
$$\fC^p = \fC^{p,red} \bigcup \cup_{\mu \in \fS} \fC^{p,\mu} \bigcup \fC^p_{\Delta}$$
where $\fC^p_\Delta$ is a (not necessarily irreducible) component in $\fC^p$ supported on $\Delta^p:= \wtil{\cM}^{p}_{red} \cap \wtil{\cM}^{p,rat}$.
The cone $\fC^p_\Delta$ can also be decomposed into $\fC^p_\Delta = \bigcup_{\mu \in \fS} \fC^{p,\mu}_\Delta$ where $\fC^{p,\mu}_\Delta$ is a component supported on $\wtil{\cM}^{p}_{red} \cap \wtil{\cM}^{p,\mu}$.

Let $E^p$ be a complex defined by pull-back of \eqref{POT:pfield} to $\wtil{\cM}^p$.
Let $\sigma$ be a pull-back cosection on $h^1(E^p)$ (obstruction sheaf).
On the other hand, by Lemma \ref{VirPuFor} and projection formula, we have

\begin{align*}
&(-1)^{d\sum\limits^m_{i=1}\deg f_i} GW_{1,d}(\alpha)\\
&=(-1)^{d\sum\limits^m_{i=1}\deg f_i} [\bar{M}_{1,k}(Q,d)]\virt \cap \prod\limits^k_{i=1} ev_i^*(\alpha_i) \nonumber \\
& = \Gysin{\bdst{E^p},\sigma}[\fC^{p, red}]\cap \prod\limits^k_{i=1} ev_i^*(\alpha_i) + \sum\limits_{\mu \in \fS} \Gysin{\bdst{E^p},\sigma}[\fC^{\mu}] \cap \prod\limits^k_{i=1}ev_i^*(\alpha_i) \nonumber
\end{align*}
where $\Gysin{\bdst{E^p},\sigma}$ is a localized Gysin map which is defined in \cite{KL13cosec} and $[\fC^\mu]=[\fC^{p,\mu}]+[\fC^{p,\mu}_\Delta]$.
In Section \S 1.2, we define
\begin{align} \label{DEF:A}
A^{red} := \Gysin{\bdst{E^p},\sigma}[\fC^{p, red}], \ \ A^{rat} := \sum\limits_{\mu \in \fS} \Gysin{\bdst{E^p},\sigma}[\fC^{\mu}] .
\end{align}

\medskip

\section{Contribution from reduced part: Proof of \eqref{A:red}}\label{redcont}

In this section, we want to show the following proposition:
\begin{prop} \label{conedecomposition:RED}
We have
$$(-1)^{d\sum\limits^m_{i=1}\deg f_i}0^!_{N^{red}}[C_{\wtil{\cM}^{red}_{Q} / \wtil{\cM}^{red} }] = \Gysin{\bdst{E^p},\sigma}[\fC^{p, red}] .$$
\end{prop}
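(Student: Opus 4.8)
The plan is to compute both sides by restricting to a locally free model and matching the cosection-localized Gysin pushforwards. First I would note that on the reduced component everything is, in a suitable sense, ``honest'': by the last paragraph of Section~\ref{localdefiningequations}, $\wtil{\cM}^{red}$ is smooth over $\wtil{\fB}$, so $\fC^{p,red}=[\wtil{\cM}^{p,red}/T_{\wtil{\cM}^{p,red}/\wtil{\fB}}]$ is simply the quotient stack from \eqref{Cone:RED}; by Proposition~\ref{Decomp:Sh} the pieces $\pi_{red*}ev_{red}^*\cO_{\PP^n}(r)$ are locally free, so over $\wtil{\cM}^{red}$ the complex $E^p$ has a two-term model whose $h^1$ is a genuine vector bundle. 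The cosection $\sigma$ of \eqref{CoSection}, restricted to $\wtil{\cM}^{p,red}$, is built from $p$ and the $f_i$, $\partial_{u_i}f_j$; on the reduced locus $u$ vanishes (in the local coordinates $w_1=\dots=w_n=0$, so the image of the stable map has a single point, or more precisely $u$ lies in the line bundle direction with the $\wtil\tau$-scaling), and I would identify the degeneracy locus $Z(\sigma)$ with $\wtil{\cM}^{red}_Q$ and check that the induced bundle realizing the localization is exactly $N^{red}=\pi_*ev^*(\oplus_i\cO_{\PP^n}(\deg f_i))|_{\wtil{\cM}^{red}_Q}$.

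Next I would invoke the general comparison between cosection-localized Gysin maps and ordinary refined Gysin maps (the functoriality in \cite{KL13cosec}): when the obstruction bundle splits off a subbundle $N^{red}$ on which the cosection is, up to the degeneracy locus, nondegenerate, the localized Gysin map $0^!_{[E^p],\sigma}$ applied to $[\fC^{p,red}]$ equals $0^!_{N^{red}}$ applied to the image cone of $\fC^{p,red}$ inside $|N^{red}|$, i.e. to $[C_{\wtil{\cM}^{red}_Q/\wtil{\cM}^{red}}]$, after accounting for the extra factors coming from the $\cO_{\PP^n}(-\deg f_i)\otimes\omega_\pi$-summands of $E^p$. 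Concretely, Proposition~\ref{Decomp:Sh} tells us that each summand $\RR\pi_*(ev^*\cO_{\PP^n}(-r)\otimes\omega_\pi)$ contributes $[\cO_U\xrightarrow{\wtil\tau}\cO_U]\oplus[0\to\cO_U^{\oplus rd}]$; on the reduced component $\wtil\tau$ is (locally) a unit times a coordinate that cuts out $\wtil{\cM}^{rat}$, hence away from $\Delta$ it is invertible and these summands contribute trivially to the virtual class but their $h^1$ parts (the $\cO_U^{\oplus rd}$'s) are exactly where the $p$-fields live and where the cosection acts nontrivially. Tracking the sign through this identification is where the $(-1)^{d\sum_i\deg f_i}$ comes from — this matches the sign bookkeeping already seen in \eqref{KO18} and Lemma~\ref{VirPuFor}.

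The main obstacle, I expect, is the careful verification that the cone $[\fC^{p,red}]$, under the embedding determined by the two-term model of $E^p$, maps precisely onto the pullback of $C_{\wtil{\cM}^{red}_Q/\wtil{\cM}^{red}}$ sitting inside $|N^{red}|$ — i.e. that no spurious components appear and that the cosection's degeneracy locus is exactly $\wtil{\cM}^{red}_Q$ with the expected (reduced, or at least correct-multiplicity) scheme structure. This requires combining the local equation $\wtil F$ of Proposition~\ref{localchart} with the deformation-to-the-normal-cone description of $\fC^{p,red}$ and checking compatibility of the cosection with the excess-intersection set-up, much as in \cite[Proposition 3.2, Lemma 3.4]{CL15} but now in the presence of marked points and for a general complete intersection rather than the quintic. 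Once that local picture is pinned down, the equality of the two classes follows from the projection formula and the standard commutativity of Gysin maps with proper pushforward along $b'$, so the remaining steps are formal.
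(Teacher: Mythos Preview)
Your proposal has a genuine strategic gap and at least one factual error.

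First, the factual error: you write that on the reduced locus ``$u$ vanishes (in the local coordinates $w_1=\dots=w_n=0$).'' This is not correct. The coordinates $w_1,\dots,w_n$ in Proposition~\ref{LocalEqF} are \emph{not} the components of the section $u$; the reduced locus $\wtil{\cM}^{p,red}\cong\wtil{\cM}^{red}$ is the closure of honest genus-one stable maps with nonconstant $u$. What \emph{does} vanish on $\wtil{\cM}^{p,red}$ is the $p$-field (the coordinates $t_1,\dots,t_m$ in Proposition~\ref{localchart}). Consequently the cosection $\sigma=\sigma_1\oplus\sigma_2$ restricts on $\wtil{\cM}^{p,red}$ to $\sigma_1\equiv 0$ (since $p=0$) and $\sigma_2(p')=\sum_i p'_i f_i(u)$, whose degeneracy locus is $\wtil{\cM}^{red}_Q$ precisely because $u$ does \emph{not} vanish.

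Second, and more seriously, your plan to show that $\fC^{p,red}$ ``maps precisely onto the pullback of $C_{\wtil{\cM}^{red}_Q/\wtil{\cM}^{red}}$ sitting inside $|N^{red}|$'' cannot work: these two cones are not related in that way. The cone $\fC^{p,red}$ is built from the geometry of $\wtil{\cM}^p/\wtil{\fB}$ and knows nothing about $Q$; it is identified in the paper as $\bdst{\EE_1}\times_{\wtil{\cM}^{red}}\bar{0}_{\bdst{\EE_2}}$, the closure of the zero section of the $\EE_2$-bundle stack. The normal cone $C_{\wtil{\cM}^{red}_Q/\wtil{\cM}^{red}}$, by contrast, is cut out by the section $\beta=(f_1(u),\dots,f_m(u))$ of $N$. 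The paper does not match cones; it matches \emph{Euler-class-type expressions}: the left-hand side is $\beta^![\wtil{\cM}^{red}]$ (refined Euler class of $N$ via the section $\beta$), while the right-hand side is reduced, via a technical lemma on closures of zero sections of bundle stacks (Lemma~\ref{technicallemma}), to $0^!_{N^\vee,\beta^\vee}[\wtil{\cM}^{red}]$. These agree up to $(-1)^{\rank N}=(-1)^{d\sum_i\deg f_i}$. The lemma is essential because $\EE_2$ is \emph{not} quasi-isomorphic to a bundle in degree one along $\wtil{\cM}^{red}\cap\wtil{\cM}^{rat}$ (cf.\ Proposition~\ref{Decomp:Sh}: the $[\cO_U\xrightarrow{\wtil\tau}\cO_U]$ summand has torsion $h^0$ there), so one must analyze the closure $\bar{0}_{\bdst{\EE_2}}$ rather than simply invoke a splitting. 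Your proposal does not supply any substitute for this step.
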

Proposition \ref{conedecomposition:RED} implies \eqref{A:red} by Definition \ref{Reduced::Class} and \eqref{DEF:A}.
On $\wtil{\cM}^{p,red} = \wtil{\cM}^{red}$, let 
\begin{align*}
&\EE_1 := \RR\pi_{*} ev^* \cO_{\PP^n}(1)^{\oplus(n+1)},
\EE_2 := \RR\pi_{*}\left(\bigoplus\limits_{i=1}^m ev^* \cO_{\PP^n}(-\deg f_i) \otimes \omega_{\pi}\right), \\
&V_1^p:=\RR^1\pi_{*} ev^* \cO_{\PP^n}(1)^{\oplus(n+1)}, V_2^p:= \RR^1\pi_{*}\left(\bigoplus\limits_{i=1}^m ev^* \cO_{\PP^n}(-\deg f_i) \otimes \omega_{\pi}\right).
\end{align*}
Note that $\wtil{\cM}^{p,red}=\wtil{\cM}^{red}$ is smooth and irreducible. Moreover we have $V_1^{p}|_{\wtil{\cM}\setminus \wtil{\cM}_{rat}} = 0$ and $\pi_{*}\left(\bigoplus\limits_{i=1}^m ev^* \cO_{\PP^n}(-\deg f_i) \otimes \omega_{\pi}\right)|_{\wtil{\cM}\setminus \wtil{\cM}_{rat}}=0$. Therefore we have $\fC^{p,red} = \bdst{\EE_1}\times_{\wtil{\cM}^{red}} \bar{0}_{\bdst{\EE_2}}$. Here, a zero section of a vector bundle stack is defined by the following. Let $\GG$ be a vector bundle stack on $X$, which is locally represented by a quotient $[G^1/G^0]$ of a two term complex of vector bundles, $[G^0 \stackrel{\phi}{\to} G^1]$. The zero section $0_{\GG} : X \to \GG$ is locally defined by $[G^0/G^0] \to [G^1/G^0]$ which is induced from a morphism of complex $[G^0 \stackrel{id}{\to} G^0] \stackrel{(id,\phi)}{\to} [G^0 \stackrel{\phi}{\to} G^0]$. By $\bar{0}_\GG$, we mean a closure of an image of the zero section $0_{\GG}$, which is equal to $[\bar{\image\phi}/G^0] \subset [G^1/G^0]$.

Then, we consider the closure of the image $\image d \subset F^1$ in $F^1$. Then we define $[\bar{\image d}/F^0]$ as a closure of the zero section of the bundle stack $\bdst{E_{\wtil{\cM}^{p}/\wtil{\fB}}\dual}$ and denote it by $\bar{0}_{\bdst{E_{\wtil{\cM}^{p}/\wtil{\fB}}\dual}}$.

Next, we prove the following lemma, which is the modification of \cite[Lemma 5.3]{CL15}.
\begin{lemm}\label{technicallemma}
Let $G\hseq = [G_{-1} \stackrel{\rou}{\to} G_{0}]$ be a $2$-term complex of locally free sheaves on an integral Deligne-Mumford stack $X$. Let $G^0 =: G_0\dual$, $G^1 := (G_{-1})\dual$ and $(G\hseq)\dual =: G\cseq = [G^0 \stackrel{d}{\to} G^1]$.
We define a bundle stack $\GG:=\bdst{G\cseq} = [G^1/G^0]$. We assume that there is a cosection $\eta : h^1(G\cseq) \to \cO_{M}$. Suppose that $h^0(G\hseq)$ is a torsion sheaf and the image sheaf $\image \rou$ is locally free. Then $h^{-1}(G\hseq)$ is locally free and there is a surjective morphism of sheaves $ h^1(G\cseq) \to h^{-1}(G\hseq)\dual$ whose dual is an isomorphism. Moreover, we have $\bar{0}_{\GG} \subset \GG(\eta)$ and
\[
\Gysin{\GG,\eta}[\bar{0}_{\GG}] = \Gysin{h^{-1}(G\hseq)\dual,\eta}[0_{h^{-1}(G\hseq)\dual}]
\]
where second $\eta$ is the cosection on $h^{-1}(G\hseq)\dual$ induced by an isomorphism $ \Hom( h^{-1}(G\hseq)\dual, \cO) \to \Hom( h^1(G\cseq), \cO) $.
\end{lemm}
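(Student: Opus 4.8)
The plan is to reduce the localized Gysin computation on the bundle stack $\GG$ to one on an honest vector bundle by exploiting the hypothesis that $h^0(G\hseq)$ is torsion and $\image\rou$ is locally free. First I would establish the sheaf-theoretic claims. From the four-term exact sequence
\[
0 \to h^{-1}(G\hseq) \to G_{-1} \xrightarrow{\rou} G_0 \to h^0(G\hseq) \to 0,
\]
splitting off $\image\rou$ (locally free by hypothesis) shows $h^{-1}(G\hseq)$ is a kernel of a surjection of locally free sheaves, hence locally free; and $\image\rou$ is a subbundle of $G_0$ with torsion quotient $h^0(G\hseq)$, so it agrees with $G_0$ on a dense open. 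Dualizing $0 \to \image\rou \to G_0 \to h^0(G\hseq)\to 0$ and $0 \to h^{-1}(G\hseq) \to G_{-1} \to \image\rou \to 0$ and splicing gives the complex $G^0 = G_0\dual \xrightarrow{d} G^1 = (G_{-1})\dual$ with cokernel $h^1(G\cseq)$; a diagram chase produces the map $h^1(G\cseq) \twoheadrightarrow h^{-1}(G\hseq)\dual$, surjective because $\Ext^1$ of a torsion sheaf against $\cO$ is not an obstruction here (more precisely, applying $\Hom(-,\cO)$ to $0\to \image\rou\to G_0\to h^0(G\hseq)\to 0$ shows $(\image\rou)\dual \hookrightarrow G^0$ has torsion-free cokernel, and a short computation identifies the two cokernels up to this torsion-free sheaf whose dual vanishes), and its dual being an isomorphism since $h^{-1}(G\hseq)$ is locally free and the kernel/cokernel of the original map is torsion. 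This lets me transport the cosection $\eta$ from $h^1(G\cseq)$ to $h^{-1}(G\hseq)\dual$.

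Next I would analyze $\bar 0_{\GG}$ locally. By definition $\bar 0_{\GG} = [\bar{\image d}/G^0] \subset [G^1/G^0]$, and since $\image\rou$ is a subbundle, $\image d$ is already a subbundle of $G^1$ — so $\bar{\image d} = \image d$ and $\bar 0_{\GG} = [\image d / G^0]$. The quotient stack $[G^1/G^0]$ fibers over $X$ with the cosection-induced function $w_\eta : |\GG| \to \CC$ factoring through $[G^1/\image d] = |h^1(G\cseq)|$-with-$G^0/\image d$ acting, and $\bar 0_{\GG}$ sits in the zero locus of $w_\eta$ because the cosection kills $h^{-1}(G\hseq)\dual$ in a way compatible with the factorization $h^1(G\cseq)\to h^{-1}(G\hseq)\dual$; concretely $\image d$ maps to $0$ under $G^1 \to h^1(G\cseq)\to h^{-1}(G\hseq)\dual$, so $\eta$ vanishes on it. Hence $\bar 0_\GG \subset \GG(\eta)$.

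For the equality of localized Gysin classes I would use the compatibility of localized Gysin maps with the smooth pullback/pushforward along $[G^1/G^0] \to [G^1/\image d]$ and the further quotient, reducing everything to the bundle stack $[h^1(G\cseq)$-part$]$; then the splitting $G^1/G^0$-business identifies the localized Gysin map $\Gysin{\GG,\eta}[\bar 0_\GG]$ with the one on the honest vector bundle $h^{-1}(G\hseq)\dual$ applied to its zero section, using that localized Gysin maps depend only on the bundle stack up to the acyclic complex $[G^0/G^0]$-type modifications (functoriality of cosection localization under the operations in \cite{KL13cosec}, as in \cite[Lemma 5.3]{CL15}). The main obstacle I anticipate is precisely this last identification: making the comparison of the two localized Gysin maps rigorous requires carefully tracking how $\eta$ behaves along the tower of quotient stacks and invoking the right functoriality statement for cosection localization applied to a cosection that is not surjective and whose degeneracy locus is all of $X$. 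I expect this to follow the template of \cite[Lemma 5.3]{CL15} closely, with the torsion-sheaf hypotheses doing the work of guaranteeing that the "extra" pieces $G^0$, $\image\rou$ contribute acyclically and hence transparently to the Gysin pushforward.
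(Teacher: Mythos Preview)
Your sheaf-theoretic setup is fine, but there is a genuine error in the second paragraph: you assert that because $\image\rou$ is locally free, $\image d$ is already a subbundle of $G^1$, so that $\bar{\image d}=\image d$. This is false. The factorization $d\colon G^0 \to (\image\rou)\dual \hookrightarrow G^1$ has the second arrow a subbundle inclusion, but the first arrow is the dual of $\image\rou \hookrightarrow G_0$, which is surjective only when $h^0(G\hseq)=G_0/\image\rou$ is locally free. Here $h^0(G\hseq)$ is only assumed torsion (and in the application it is typically nonzero), so $G^0 \to (\image\rou)\dual$ is merely \emph{generically} surjective and $\image d$ is a proper subsheaf of $(\image\rou)\dual$ whose ``total space'' is not closed in $|G^1|$. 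Concretely, take $X=\bbA^1$ with coordinate $t$, $G_{-1}=G_0=\cO$, and $\rou$ multiplication by $t$: then $h^{-1}=0$, $h^0=\cO/t$ is torsion, $\image\rou\cong\cO$ is locally free, yet $d$ is again multiplication by $t$, $\image d = t\cO \subsetneq \cO = G^1$, and the closure of the image of $|G^0|\to|G^1|$ is all of $|G^1|$, not the locus $\{u=0\}$.

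The paper handles this by identifying $\bar{\image d}$ directly rather than assuming it equals $\image d$: on the dense open $U=X\setminus\Supp h^0(G\hseq)$ one has $\image d|_U = \ker(a\dual)|_U$, where $a\dual\colon G^1 \to h^{-1}(G\hseq)\dual$ is the bundle surjection; since $|\ker a\dual|$ is irreducible, the closure is $\bar{\image d}=|\ker a\dual|$. Once this is established, the step you flag as the ``main obstacle'' is in fact immediate: the cosection on $G^1$ is $\eta\circ a\dual$, it vanishes identically on $\ker a\dual$ (giving $\bar{0}_\GG\subset\GG(\eta)$), and the equality
\[
\Gysin{G^1,\eta\circ a\dual}[|\ker a\dual|]=\Gysin{h^{-1}(G\hseq)\dual,\eta}[0_{h^{-1}(G\hseq)\dual}]
\]
is the standard compatibility of the localized Gysin map with the bundle surjection $a\dual$. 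No tower-of-quotient-stacks argument is needed; the difficulty lies where you thought there was none.
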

\begin{proof}
Consider the following exact sequences
\[
0 \to h^{-1}(G\hseq) \stackrel{a}{\to} G_{-1} \stackrel{\rou}{\to} G_0 \stackrel{b}{\to} h^0(G\hseq) \to 0.
\]
Since $\image \partial$ is locally free, $h^{-1}(G\hseq)$ is locally free. 
We have two exact sequences
\begin{align*}
0 \to h^0(G\cseq) \to G^0 \stackrel{d}{\to} G^1 \to h^1(G\cseq) \to & 0 \\
0 \to \image \partial \dual \stackrel{\iota}{\to} G^1 \stackrel{a \dual}{\to} h^{-1}(G\hseq)\dual \to & 0.
\end{align*}
Since the morphism $G^0 / h^0(G\cseq) \to \image \partial \dual $ induced by $(\image \rou \hra G_0)\dual$ is injective, $ h^1(G\cseq) \to h^{-1}(G\hseq)\dual$ is a surjective morphism of sheaves.
Since the kernel of $d \dual = \partial$ and the kernel of $\iota \dual$ are identical, we have an isomorphism $ \Hom( h^{-1}(G\hseq)\dual, \cO) \to \Hom( h^1(G\cseq), \cO) $.
Hence, we may consider the cosection $\eta$ of $h^1(G\cseq)$ as a cosection of $h^{-1}(G\hseq)\dual$.
By definition, $\bar{0}_{\GG}=[\bar{\image d}/G^0]$. Since $h^0(G\hseq)$ is a torsion sheaf and $X$ is integral, we can choose an open dense subset $U \subset X$ such that $h^0(G\hseq)|_U = 0$. Let us denote $\bar{\image d}$ by $Z$. Then $Z$ is equal to the closure of $Z\times_X U$ in $G^1$. Since $h^0(G\hseq)|_U=0$, we obtain the short exact sequence of locally free sheaves
\[
0 \to G^0|_U \stackrel{d}{\to} G^1|_U \stackrel{a\dual}{\to} h^1(G\cseq)|_U \to 0.
\]
Therefore, $\image d \times_X U = Z \times_X U = |\ker a\dual |_U|$. Since $\ker a\dual$ is irreducible, we conclude that $Z = |\ker a\dual|$. Therefore, if we consider a composed cosection $\eta \circ a\dual : G^1 \to \cO_X$, it is clear that $Z \subset G^1(\eta \circ a\dual)$. Thus we check that $\bar{0}_{\GG} \subset \GG(\eta)$, and $\Gysin{\GG,\eta}[\bar{0}_{\GG}]=\Gysin{G^1,\eta\circ a\dual}[Z] = \Gysin{G^1,\eta\circ a\dual}[|\ker a\dual|] = \Gysin{h^{-1}(G\hseq)\dual,\eta}[0_{h^{-1}(G\hseq)\dual}]$.
\end{proof}

As we explained in \eqref{CoSection}, we define cosections $\sigma_1 : V_1^p \to \cO_{\wtil{\cM}^{p,red}}$, $\sigma_2 : V_2^p \to \cO_{\wtil{\cM}^{p,red}}$ and $ \sigma = \sigma_1 \oplus \sigma_2 : V_1^p \oplus V_2^p \to \cO_{\wtil{\cM}^{p,red}}$.
Moreover, it is well-known that
\begin{align*}
\EE_1 \stackrel{qis}{\cong} \left[A_0 \stackrel{d_A}{\to} A_1 \right], \EE_2 \stackrel{qis}{\cong} \left[B_0 \stackrel{d_B}{\to} B_1 \right]
\end{align*}
for some vector bundles $A_0,A_1,B_0,B_1$ on $\wtil{\cM}^{p,red}$. We have $\EE_1 = [A_1/A_0]$, $\EE_2 = [B_1/B_0]$ and there are induced cosections $\sigma_1 : A_1 \to \cO_{\wtil{\cM}^{p,red}}$, $\sigma_2 : B_1 \to \cO_{\wtil{\cM}^{p,red}}$ and $\sigma : A_1\oplus B_1 \to \cO_{\wtil{\cM}^{p,red}}$. Let $Z_B : = \bar{\image d_B}$.
By Proposition \ref{Decomp:Sh}, we can easily check that the 2-term complex $[B_0 \stackrel{d_B}{\to} B_1]\dual$ satisfies the conditions of Lemma \ref{technicallemma}.

By Lemma \ref{technicallemma}, we compute the contribution from the reduced component as follows. Let $i : (\sigma\dual)^{-1}(0) \hra (\sigma_2\dual)^{-1}(0)$ be an inclusion of the degeneracy loci. We note that this is an isomorphism.
\begin{align} \label{REFEULER}
& \Gysin{\bdst{E_{\wtil{\cM}^{p}/\wtil{\fB}}\dual}, \sigma}[\fC^{p,red}] \\ \nonumber
& = i_* \Gysin{\bdst{\EE_1}\oplus \bdst{\EE_2}, \sigma}[\fC^{p,red}] \\ \nonumber
& = i_* \Gysin{\bdst{\EE_1}\oplus \bdst{\EE_2},\sigma}[\bdst{\EE_1}\times_{\wtil{\cM}^{p,red}} \bar{0}_{\bdst{\EE_2}}] \\ \nonumber
& = \Gysin{\bdst{\EE_1}\oplus \bdst{\EE_2},\sigma_2}[\bdst{\EE_1}\times_{\wtil{\cM}^{p,red}} \bar{0}_{\bdst{\EE_2}}] \\ \nonumber
& = \Gysin{N \dual,\sigma_2}[0_{N \dual}]=\Gysin{N \dual,\sigma_2}[\wtil{\cM}^{p,red}].
\end{align}
where $N:= \pi_* ev^* \oplus_i \cO_{\PP^n}(\deg f_i) \cong (V_2^p)\dual$. Here, the third equality will be explained in the proof of Proposition \ref{anallocalGysin} and the fourth equality is by Lemma \ref{technicallemma}.
We note that $(\sigma_2\dual)^{-1}(0) = \wtil{\cM}_Q^{red}$. 

\begin{proof}[Proof of Proposition \ref{conedecomposition:RED}]
Let
\begin{align*}
T:= T_{\wtil{\cM}^{p,red}/ \wtil{\fB}}  \cong   \pi_* ev^* \cO_{\PP^n}(1)^{\oplus n+1}.
\end{align*}
The defining morphisms $f_1, ..., f_m$ induces a section $\beta: \cO_{\wtil{\cM}^{p,red}} \to N$.
Since $\beta^{-1}(0) = \wtil{\cM}^{red}_Q$ and $N^{red} \cong N|_{\wtil{\cM}^{red}_Q}\cong N|_{\beta^{-1}(0)} $, we have 
\begin{align} \label{RED1}
0^!_{N^{red}}[C_{\wtil{\cM}^{red}_{Q} / \wtil{\cM}^{red} }] = \beta^! [\wtil{\cM}^{red}] = \beta^! [\wtil{\cM}^{p,red}] .
\end{align}
We can observe that the cosection $\sigma_2 : N \dual  \to \cO_{\wtil{\cM}^{p,red}}$ is exactly equal to $\beta^\vee$.
By \eqref{REFEULER} we have 
\begin{align} \label{RED2}
\Gysin{\bdst{E^p},\sigma}[\fC^{p, red}] =\Gysin{N \dual,\sigma_2}[\wtil{\cM}^{p,red}] = 0^!_{N^\vee, \beta^\vee} [\wtil{\cM}^{p,red}].
\end{align}
Proposition \ref{conedecomposition:RED} is obtained by \eqref{RED1}, \eqref{RED2} and the fact
$$(-1)^{d\sum_i \deg f_i}\beta^![\wtil{\cM}^{p,red}] = 0^!_{N^\vee, \beta^\vee} [\wtil{\cM}^{p,red}].$$
We note that $\rank N= d \sum_i \deg f_i$. 
\end{proof}

\medskip

\section{Contribution from rational part: Proof of \eqref{A:rat}}\label{rationalcont}

In this section we compute $\Gysin{\bdst{E^p},\sigma}[\fC^{\mu}]$ for each $\mu \in \fS$.
We introduce some notations for convenience.
Let
\begin{align*}
V_1 := \RR^1\pi_{*} ev^* \cO_{\PP^n}(1)^{\oplus(n+1)}, \ \
V_2 := \RR^1\pi_{*}\left(\bigoplus\limits_{i=1}^m ev^* \cO_{\PP^n}(-\deg f_i) \otimes \omega_{\pi}\right),
\end{align*}
and let $V:=V_1\oplus V_2$ be vector bundles on $\wtil{\cM}^{rat}$. 
Let $V_1^p$, $V^p_2$, and $V^p$ be similarly defined vector bundles on $\wtil{\cM}^{p,rat}$.
Note that $\rank V_1 = n+1$ and $\rank V_2 = d(\sum_i \deg f_i) +m$.
Let $$A := \RR^0\pi_{*} ev^* \cO_{\PP^n}(1)^{\oplus(n+1)} \oplus
\RR^0\pi_{*}\left(\bigoplus\limits_{i=1}^m ev^* \cO_{\PP^n}(-\deg f_i) \otimes \omega_{\pi}\right)$$
be a vector bundle on $\wtil{\cM}^{p,rat}$.
Note that $E^p |_{\wtil{\cM}^{p,rat}} \cong [A \xrightarrow{0} V^p]$.
Let $\sigma_1 :  V_{1}^{p}  \to \cO_{\wtil{\cM}^{p,rat}}$ and $\sigma_2 :  V_{2}^{p}  \to \cO_{\wtil{\cM}^{p,rat}}$ be cosections defined by the pullbacks of the morphisms
\begin{align*}
& (u_0',\dots,u_n')  \mapsto \sum\limits_{i=1}^{m} p_i \sum\limits_{j=0}^{n} \frac{\rou f_i}{\rou u_j}(u_0,\dots,u_n)u_j' , \\
& (p_1',\dots,p_m')  \mapsto \sum\limits_{i=1}^{m} p_i'f_i(u_0, \dots, u_n),
\end{align*}
respectively.
Then $\sigma = \sigma_1 \oplus \sigma_2$.
Let $C^\mu := \fC^\mu \times_{\bdst{E^p}} |V^{p}|$ and $C^{p,\mu}$, $C^{p,\mu}_\Delta$ be similarly defined stacks.
We denote $| S |$ the total space of a vector bundle $S$ in general.
Note that 
\begin{align}\label{CONE:DECOMP}
\Gysin{\bdst{E^p},\sigma}[\fC^{\mu}] = 0^!_{V^p, \sigma}[C^\mu] = 0^!_{V^p, \sigma}[C^{p, \mu}] + 0^!_{V^p, \sigma}[C^{p,\mu}_\Delta]
\end{align}
for each $\mu \in \fS$.

Let $\gamma : \wtil{\cM}^{p,rat} \to \wtil{\cM}^{rat}$ be a projection and 
let $\Theta : = \wtil{\cM}^{red} \cap \wtil{\cM}^{rat}$.
The following proposition is crucial for the calculation.

\begin{prop} \cite[Proposition 7.1]{CL15} \label{conesupp}
There is a sub-bundle $F=F_1 \oplus\dots \oplus F_m \subset V_2|_{\Theta}$ where $F_1,\dots, F_m$ are line bundles such that :
\[ C^{\mu} \cap | 0\oplus V^{p}_2  | \subset  \wtil{\cM}^{p,rat} \cup |\gamma^* F|
\]
for each $\mu$. Here, $\wtil{\cM}^{p,rat}$ is the image of a zero section.
\end{prop}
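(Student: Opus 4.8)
The plan is to work in the local model provided by Proposition \ref{localchart}, where $\wtil{\cM}^p$ sits inside $\wtil F^{-1}(0)$ with coordinates $(\wtil\tau_{v_i},\wtil\tau_{a_j},w_1,\dots,w_n,t_1,\dots,t_m)$ and universal equations $\wtil\tau w_l=0$, $\wtil\tau t_i=0$. Over such a chart, $\wtil{\cM}^{p,\mu}$ is (locally) cut out by one of the factors $\wtil\tau_{v_i}$ or $\wtil\tau_{a_j}$ of $\wtil\tau$, so on $\wtil{\cM}^{p,\mu}$ the remaining coordinates $w_l,t_i$ are unconstrained. I would first use Proposition \ref{Decomp:Sh} to split $V_1^p$ and $V_2^p$ over $\wtil{\cM}^{p,\mu}$: the piece $\RR^1\pi_*$ of $\cO(-\deg f_i)\otimes\omega_\pi$ contributes a line summand (the ``$[\cO\xrightarrow{\wtil\tau}\cO]$'' part, i.e. $H^1$ of the trivial bundle on the genus-one subcurve, which becomes nonzero precisely along $\{\wtil\tau=0\}$) plus a trivial $\cO^{\oplus d\deg f_i}$; only the first line matters for the cosection support statement. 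This is exactly where the $m$ line bundles $F_1,\dots,F_m$ come from: $F_i$ is the line subbundle of $V_2|_\Theta$ corresponding to the $H^1$ of the trivial bundle on the genus-one component twisted by $\cO(-\deg f_i)\otimes\omega$.

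Next I would analyze the cone $C^\mu\cap|0\oplus V_2^p|$ pointwise. Following the argument in \cite[Proposition 7.1]{CL15}, a point of this cone over $x\in\wtil{\cM}^{p,\mu}$ records a tangent/obstruction direction in the $t$-variables; the key computation is that the normal cone of $\wtil{\cM}^{p,\mu}$ inside $\wtil{\cM}^p$ in the $(w,t)$-directions, after intersecting with the obstruction bundle summand $V_2^p$, is supported on the locus where the $H^0$-data $u$ of the genus-one curve forces the $p$-field $p$ (and hence the obstruction vector) to lie in the $F$-directions. Concretely: away from $\Theta$, i.e. on $\wtil{\cM}^{p,\mu}\setminus(\wtil{\cM}^{p,\mu}\cap\wtil{\cM}^{p,red})$, the line bundle pieces of $V_2^p$ vanish identically (as noted in \S\ref{redcont}, $\pi_*(\oplus ev^*\cO(-\deg f_i)\otimes\omega_\pi)|_{\wtil{\cM}\setminus\wtil{\cM}^{rat}}=0$ and dually the relevant $R^1$ pieces only live on $\wtil{\cM}^{rat}$), so the only cone contribution in the $V_2^p$-directions is the zero section $\wtil{\cM}^{p,rat}$ itself; and along $\Theta$ the extra directions are controlled by $F$. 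Taking closure of the generic picture gives the asserted containment $C^\mu\cap|0\oplus V_2^p|\subset\wtil{\cM}^{p,rat}\cup|\gamma^*F|$.

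The main obstacle I anticipate is making the ``closure'' step rigorous: the clean description of the cone holds on the open locus where exactly one $\wtil\tau$-factor vanishes and away from the deeper strata $\wtil{\cM}^{p,\mu}\cap\wtil{\cM}^{p,\mu'}$ and $\Theta$, but one must check that no spurious components of $C^\mu\cap|0\oplus V_2^p|$ appear over these lower-dimensional strata. This requires a dimension/irreducibility count: $\wtil{\cM}^{p,\mu}$ is irreducible (it comes from $\bar M_{1,k}(\PP^n,d)_\mu$, which is irreducible by \cite{VZ08}), and $C^\mu$ over its generic point is the closure of an irreducible cone, so it suffices to bound the fiber dimension of the candidate components over $\Theta$ by the generic fiber dimension — which is where the explicit rank-one nature of the $F_i$ (one $t_i$-direction per defining equation $f_i$) is used. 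A secondary technical point is identifying $F_i$ intrinsically and checking it is genuinely a subbundle (not just a subsheaf) of $V_2|_\Theta$; this follows from the local freeness statements in Proposition \ref{Decomp:Sh} together with base change, since on $\Theta$ the genus-one subcurve has constant geometric genus one. Once the generic description and the stratum-by-stratum dimension estimate are in place, the proposition follows by taking scheme-theoretic closure, exactly as in the quintic case treated in \cite{CL15}.
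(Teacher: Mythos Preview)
The paper does not supply its own proof of this proposition; it is quoted from \cite[Proposition~7.1]{CL15}, where the case $m=1$ is treated. So the relevant benchmark is Chang--Li's argument, and the question is whether your sketch correctly adapts it.

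Your identification of $F_i$ --- as the line summand of $R^1\pi_*(ev^*\cO(-\deg f_i)\otimes\omega_\pi)|_\Theta$ coming from $H^1$ on the contracted genus-one subcurve --- is correct, and working in the local chart of Proposition~\ref{localchart} is the right starting point. The gap is in the second paragraph, where you try to argue that $C^{p,\mu}\cap|0\oplus V_2^p|$ reduces to the zero section over the generic locus of $\wtil{\cM}^{p,\mu}$.

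Your justification appeals to the vanishing $\pi_*\bigl(\oplus ev^*\cO(-\deg f_i)\otimes\omega_\pi\bigr)|_{\wtil{\cM}\setminus\wtil{\cM}^{rat}}=0$ and claims that ``the line bundle pieces of $V_2^p$ vanish identically'' away from $\Theta$. This is misdirected: the cited vanishing lives on the complement of $\wtil{\cM}^{rat}$, whereas the entire discussion of Proposition~\ref{conesupp} takes place inside $\wtil{\cM}^{p,rat}$, where $V_2^p$ is locally free of constant rank $d\sum_i\deg f_i+m$ and no summand of it degenerates on any open set. (You also conflate $\wtil{\cM}^{p,red}=\{w=t=0\}$ with the locus $\gamma^{-1}(\Theta)=\{w=0\}$ that actually supports $\gamma^*F$.) So the mechanism you invoke cannot force the cone into the zero section.

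What actually does the work in \cite[\S7]{CL15} is an explicit computation of how the cone $C^{p,\mu}$ sits inside $|V_1^p\oplus V_2^p|$ in the local chart, tracing through the identification of the obstruction bundle provided by Proposition~\ref{Decomp:Sh}. The outcome is that the projection of the cone to $V_1^p$ is governed by the section data $u$ (equivalently the $w$-coordinates), so that intersecting with $|0\oplus V_2^p|$ forces $w=0$, i.e.\ lands one in $\gamma^{-1}(\Theta)$; there the surviving $V_2^p$-directions are exactly the $m$ line bundles $F_i$ (corresponding to the $t$-coordinates). Your sketch never engages with this computation, and the closure/dimension bounds you discuss at the end --- while sensible --- address a secondary issue and cannot substitute for it.
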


Next, we recall the constructions in \cite[Section 6]{CL15}. 
Let $\PP  : =  \PP(A_2  \oplus \cO_{\wtil{\cM}^{rat}})$ be a compactification of $\wtil{\cM}^{p,rat}$ 
where $A_2:= \RR^0\pi_{*}\left(\bigoplus\limits_{i=1}^m ev^* \cO_{\PP^n}(-\deg f_i) \otimes \omega_{\pi}\right)$ is a vector bundle on $\wtil{\cM}^{rat}$.
Note that $|A_2| \cong \wtil{\cM}^{p,rat}$.
Let $\bar{\gamma} : \PP \to \wtil{\cM}^{rat}$ be the projection. 
Note that $\wtil{\cM}^{rat}$ is identified with $\PP(0\oplus \cO_{{\cM}^{rat}})\subset \PP$. 
We will call it the zero section of $\PP$.
Let $D_{\infty}:=\PP(A_2 \oplus 0)$ be the divisor at infinity. 
Let $\bar{V}^{p}_{1}:=\bar{\gamma}^*V_{1}(-D_{\infty})$ and $\bar{V}^{p}_{2}:=\bar{\gamma}^*V_{2}$. 
There is a tautological section $\bar{t}_{A_2}$ of the vector bundle $\bar{\gamma}^*A_2(D_{\infty})$ on $\PP$. 
There are canonical homomorphisms of sheaves $\xi_1 : V_{1}\otimes A_2 \to \cO_{\wtil{\cM}^{rat}}$, $\xi_2 : V_{2} \to \cO_{\wtil{\cM}^{rat}}$ defined by the pullbacks of
\begin{align*}
 (\dot{u_0},\dots,\dot{u_n})\otimes(p_1,\dots,p_m) & \mapsto \sum\limits_{i=1}^{m} p_i \sum\limits_{j=0}^{n} \frac{\rou f_i}{\rou u_j}(u_0,\dots,u_n)\dot{u_j} , \\
(\dot{p_1},\dots,\dot{p_m}) & \mapsto \sum\limits_{i=1}^{m} \dot{p_i}f_i(u_0, \dots, u_n),
\end{align*}
respectively.
We define $\bar{\sigma}_1 : \bar{V}^{p}_{1} \to \cO_{\PP}$ to be $\bar{\gamma}^*(\xi_1)(- , \bar{t}_N)$ 
and $\bar{\sigma}_2 : = \bar{\gamma}^*(\xi_2)$. 
And we define a cosection $\bar{\sigma}$ by $\bar{\sigma}_1 \oplus \bar{\sigma}_2 : \bar{V}^{p} : = \bar{V}^{p}_{1} \oplus \bar{V}^{p}_{2} \to \cO_{\PP}$.

Let $\iota : Z(\bar{\sigma}) \hra  Z(\bar{\sigma}_2)$ be the inclusion of degeneracy loci of cosections.
We note that $Z(\bar{\sigma}) = \wtil{\cM}^{rat}_{Q} $ and $Z(\bar{\sigma}_2) = \bar{\gamma}^{-1}(Z(\bar{\sigma}))$, which is a $\PP^m$-bundle $\PP|_{\wtil{\cM}^{rat}_{Q}}$ over $\wtil{\cM}^{rat}_{Q}$. 
Moreover, $\iota(Z(\bar{\sigma}))$ is the zero section of the $\PP|_{\wtil{\cM}^{rat}_{Q}}$.
The following proposition is important for localizing target space $\PP^n$ to $Q$.  
\begin{prop}\label{anallocalGysin}
\[
\iota_* \Gysin{\bar{V}^{p},\bar{\sigma}} = \Gysin{\bar{V}_2^{p},\bar{\sigma}_2}\circ \Gysin{\bar{V}_1^{p}} : A_*(\bar{V}^{p}(\bar{\sigma})) \to A_*(Z(\bar{\sigma}_2))_{\QQ} \cong A_*(\PP|_{\wtil{\cM}^{rat}_Q})_\QQ.
\]
where $\Gysin{\bar{V}_1^{p}}$ is in fact a bivariant class from $\bar{V}^{p}(\bar{\sigma})$ to $\bar{V}^{p}_2(\bar{\sigma}_2)$ for the following fiber diagram :
\[
\xymatrix{
\bar{V}^{p}(\bar{\sigma}) \ar@{^(->}[r] & |\bar{V} ^{p}| \ar[r] & |\bar{V}^{p}_1| \\
\bar{V}^{p}_2(\bar{\sigma}_2) \ar@{^(->}[r] \ar@{^(->}[u] & |\bar{V}^{p}_2| \ar@{^(->}[u] \ar[r] & \PP \ar@{^(->}[u]_{0_{\bar{V}_1^{p}}}.
}
\]
\end{prop}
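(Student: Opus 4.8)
The plan is to deduce Proposition~\ref{anallocalGysin} from a general compatibility statement for cosection-localized Gysin maps under a direct-sum decomposition of the ambient bundle, following the strategy of \cite[Section 6]{CL15}.

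First I would record the elementary facts that make the two sides well defined and land in the same group. Since $\bar{V}^{p}=\bar{V}^{p}_1\oplus\bar{V}^{p}_2$ and $\bar{\sigma}=\bar{\sigma}_1\oplus\bar{\sigma}_2$, the function induced by $\bar{\sigma}$ on $|\bar{V}^{p}|$ is $w_{\bar{\sigma}_1}+w_{\bar{\sigma}_2}$, and $w_{\bar{\sigma}_1}$ vanishes identically on $|\bar{V}^{p}_2|\subset|\bar{V}^{p}|$ (the pullback of the zero section $0_{\bar{V}_1^{p}}$); hence $\bar{V}^{p}(\bar{\sigma})\cap|\bar{V}^{p}_2|=\bar{V}^{p}_2(\bar{\sigma}_2)$, so the refined Gysin class $\Gysin{\bar{V}^{p}_1}$ of the regular embedding $0_{\bar{V}_1^{p}}:\PP\hookrightarrow|\bar{V}^{p}_1|$, applied over $\bar{V}^{p}(\bar{\sigma})$, indeed lands in $A_*(\bar{V}^{p}_2(\bar{\sigma}_2))$, where $\Gysin{\bar{V}^{p}_2,\bar{\sigma}_2}$ is then defined. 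Similarly $Z(\bar{\sigma})=Z(\bar{\sigma}_1)\cap Z(\bar{\sigma}_2)$, so $\iota:Z(\bar{\sigma})\hookrightarrow Z(\bar{\sigma}_2)$ makes sense, and both sides are maps $A_*(\bar{V}^{p}(\bar{\sigma}))\to A_*(Z(\bar{\sigma}_2))_{\QQ}$.

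The core is the following general lemma, of which Proposition~\ref{anallocalGysin} is the case $(X,E_1,E_2,\sigma_1,\sigma_2)=(\PP,\bar{V}^{p}_1,\bar{V}^{p}_2,\bar{\sigma}_1,\bar{\sigma}_2)$: for vector bundles $E_1,E_2$ on a Deligne--Mumford stack $X$ with cosections $\sigma_i:E_i\to\cO_X$, setting $E=E_1\oplus E_2$, $\sigma=\sigma_1\oplus\sigma_2$ and $\iota:Z(\sigma)=Z(\sigma_1)\cap Z(\sigma_2)\hookrightarrow Z(\sigma_2)$, one has
\[
\iota_*\,\Gysin{E,\sigma}(\alpha)=\Gysin{E_2,\sigma_2}\bigl(\Gysin{E_1}(\alpha)\bigr)\qquad\text{for every }\alpha\in A_*(E(\sigma)),
\]
where $\Gysin{E_1}$ is the refined Gysin class of the zero section of $E_1$ (so $\Gysin{E_1}(\alpha)$ is supported on $E(\sigma)\cap|E_2|=E_2(\sigma_2)$). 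To prove the lemma I would invoke the functoriality properties of the cosection-localized Gysin map established in \cite{KL13cosec} (compatibility with proper pushforward, flat/\'etale pullback, and refined Gysin maps of regular embeddings); by these the identity reduces to the case where $X$ is affine and smooth and $E_1,E_2$ are trivial, which is exactly the situation on the charts $\{\wtil{F}=0\}$ of Proposition~\ref{localchart}, where $\bar{\sigma}_1,\bar{\sigma}_2$ are the explicit, fiberwise-linear maps defined just before the statement. On such a chart both sides are computed directly from the Kiem--Li construction: over $X\setminus Z(\sigma_1)$ the cosection $\sigma_1$ is surjective, so $E(\sigma)$ restricts there to an affine bundle over $|E_2|$ modelled on the hyperplane subbundle $\ker\sigma_1\subset E_1$, and intersecting with the zero section of $E_1$ and then $\sigma_2$-localizing yields the same class as $\sigma$-localizing at once, the only difference being that the latter is a priori supported on the smaller locus $Z(\sigma)\subset Z(\sigma_2)$ --- which is precisely what the operator $\iota_*$ records. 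I expect this cycle-level reconciliation of the two Kiem--Li constructions on a chart to be the main obstacle; once it is in place, patching is automatic from the \'etale-locality, and the differences from the setting of \cite{CL15} (unequal degrees $\deg f_i$, presence of marked points) do not affect the argument.

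Finally, applying the lemma to $X=\PP$, $E_i=\bar{V}^{p}_i$, $\sigma_i=\bar{\sigma}_i$, and using the identifications recorded above the statement --- namely $Z(\bar{\sigma})=\wtil{\cM}^{rat}_{Q}$, embedded as the zero section of the $\PP^m$-bundle $Z(\bar{\sigma}_2)=\bar{\gamma}^{-1}(\wtil{\cM}^{rat}_{Q})=\PP|_{\wtil{\cM}^{rat}_{Q}}$ --- gives exactly the asserted equality of maps into $A_*(Z(\bar{\sigma}_2))_{\QQ}\cong A_*(\PP|_{\wtil{\cM}^{rat}_Q})_{\QQ}$, and hence (taking $\alpha=[C^\mu]$) also the third equality used in \eqref{REFEULER}.
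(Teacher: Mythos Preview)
Your proposal takes a genuinely different route from the paper. The paper does \emph{not} work with the Kiem--Li blow-up construction directly, nor does it reduce to affine charts. Instead it rewrites the cosection-localized Gysin map via the localized Chern character of a Koszul $2$-periodic complex, using \cite[Theorem~2.6]{KO18localized}:
\[
\Gysin{\bar{V}^{p},\bar{\sigma}} = \mathrm{td}(\bar{V}^{p}|_{Z(\bar{\sigma})})\cdot \mathrm{ch}^{\bar{V}^{p}(\bar{\sigma})}_{Z(\bar{\sigma})}(\{q^*\bar{\sigma},t_{\bar{V}^{p}}\}).
\]
The direct-sum decomposition $\bar{V}^{p}=\bar{V}^{p}_1\oplus\bar{V}^{p}_2$, $\bar{\sigma}=\bar{\sigma}_1\oplus\bar{\sigma}_2$ then becomes a tensor product of Koszul complexes; one applies the $\mathbb{A}^1$-homotopy invariance of localized Chern characters \cite[Lemma~2.1]{PV00topchern} to deform $\bar{\sigma}_1$ to $0$ in the first factor, and finally the multiplicativity \cite[Proposition~2.3(vi)]{PV00topchern} to split the Chern character as a composition. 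This yields the identity globally, in one stroke, with no chart computation.

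Your approach is in principle viable --- the general lemma you isolate is correct, and indeed follows a posteriori from the paper's argument --- but as written it has a gap at exactly the point you flag. The ``cycle-level reconciliation of the two Kiem--Li constructions on a chart'' is not carried out, and this is not a formality: the Kiem--Li map is defined via a blow-up along the degeneracy locus and an intersection with a kernel subbundle, and comparing that procedure for $(E,\sigma)$ with the composite of the ordinary Gysin for $E_1$ followed by the localized Gysin for $(E_2,\sigma_2)$ requires tracking how the blow-up for $Z(\sigma)=Z(\sigma_1)\cap Z(\sigma_2)$ relates to the one for $Z(\sigma_2)$. Your sketch over $X\setminus Z(\sigma_1)$ only addresses the easy locus; the content is over $Z(\sigma_1)$. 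Also, your appeal to ``the charts $\{\wtil{F}=0\}$ of Proposition~\ref{localchart}'' is misplaced: those are local models for $\wtil{\cM}^p$, whereas the proposition lives on $\PP=\PP(A_2\oplus\cO_{\wtil{\cM}^{rat}})$ over $\wtil{\cM}^{rat}$, and $\{\wtil{F}=0\}$ is neither smooth nor a chart of $\PP$. This is a minor slip, but it suggests the reduction step was not thought through. If you want to make your route complete, the cleanest fix is precisely to adopt the localized-Chern-character formalism the paper uses, which turns the missing local comparison into the standard homotopy and multiplicativity properties of \cite{PV00topchern}.
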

\begin{proof}
Let $q : |\bar{V}^{p}| \to \PP$ be the projection, and $t_{\bar{V}^{p}}$ be the tautological section of $q^* \bar{V}^{p}$. 
Consider $w_{\bar{\sigma}} :=  q^* \bar{\sigma} \circ t_{\bar{V}^{p}} \in \Gamma(\cO_{|\bar{V}^{p}|})$. 
Recall that $w_{\bar{\sigma}}^{-1}(0) = \bar{V}^{p}(\bar{\sigma})$. 
By \cite[Theorem 2.6]{KO18localized}, we have
\[
\Gysin{\bar{V}^{p},\bar{\sigma}} = \mathrm{td}(\bar{V}^{p}|_{Z(\bar{\sigma})})\cdot \mathrm{ch}^{\bar{V}^{p}(\bar{\sigma})}_{Z(\bar{\sigma})}(\{q^*\bar{\sigma},t_{\bar{V}^{p}}\})
\]
where $\{q^*\bar{\sigma},t_{V} \}$ is a Koszul $2$-periodic complex of vector bundles on $\bar{V}^{p}(\bar{\sigma})$ induced by $q^*\bar{\sigma}$ and $t_{\bar{V}^{p}}$; see \cite{PV00topchern, KO18localized} for the notations.
We will use so called $\mathbb{A}^1$-homotopy invariance of localized Chern characters in \cite[Lemma 2.1]{PV00topchern}.

Consider the projections $q'_1 : |\bar{V}^{p}| \to |\bar{V}_{1}^{p}|$, $q'_2 : |\bar{V}^{p}| \to |\bar{V}_{2}^{p}|$ and $q_1 : |\bar{V}_{1}^{p}| \to \PP$, $q_2 : |\bar{V}_{2}^{p}| \to \PP$. 
Also consider the tautological sections $t_{\bar{V}_{1}^{p}}$ of $q_1^* \bar{V}_{1}^{p}$ and $t_{\bar{V}_{2}^{p}}$ of $q_2^* \bar{V}_{2}^{p}$. 
We have $t_{\bar{V}^{p}} = (q'_1)^{*} t_{\bar{V}_{1}^{p}} \oplus (q'_2)^{*} t_{\bar{V}_{2}^{p}}$. 
Moreover, since $\bar{\sigma} = \bar{\sigma}_1 \oplus \bar{\sigma}_2$, we have 
$\{q^*\bar{\sigma},t_{\bar{V}^{p}}\} = \{q^*\bar{\sigma}_2,(q'_2)^*t_{\bar{V}_{2}^{p}}\} \otimes \{q^*\bar{\sigma}_1,(q'_1)^*t_{\bar{V}_{1}^{p}}\}$.
Consider the $\mathbb{A}^1$-family of a 2-periodic complex of vector bundles on $\bar{V}^{p}(\bar{\sigma}) \times \mathbb{A}^1$,
$$\{q^*\bar{\sigma}_2,(q'_2)^*t_{\bar{V}_{2}^{p}}\} \otimes \{\lambda q^*\bar{\sigma}_1,(q'_1)^*t_{\bar{V}_{1}^{p}}\}$$
where $\lambda \in \mathbb{A}^1$ is an $\mathbb{A}^1$-parameter. 
By \cite[Lemma 2.1]{PV00topchern}, we have
\begin{align*} \label{Decomp:Ch}
\iota_* \mathrm{ch}^{\bar{V}^{p}(\bar{\sigma})}_{Z(\bar{\sigma})} (\{q^*\bar{\sigma},t_{\bar{V}^{p}}\} ) &=
\mathrm{ch}^{\bar{V}^{p}(\bar{\sigma})}_{Z(\bar{\sigma}_2)} (\{q^*\bar{\sigma},t_{\bar{V}^{p}}\} ) \\
&= \mathrm{ch}^{\bar{V}^{p}(\bar{\sigma})}_{Z(\bar{\sigma}_2)} (\{q^*\bar{\sigma}_2,(q'_2)^*t_{\bar{V}_{2}^{p}}\} \otimes \{q^*\bar{\sigma}_1,(q'_1)^*t_{\bar{V}_{1}^{p}}\} ) \nonumber \\
&= \mathrm{ch}^{\bar{V}^{p}(\bar{\sigma})}_{Z(\bar{\sigma}_2)} (\{q^*\bar{\sigma}_2,(q'_2)^*t_{\bar{V}_{2}^{p}}\} \otimes \{0,(q'_1)^*t_{\bar{V}_{1}^{p}}\} ). \nonumber
\end{align*}
Thus, we obtain
\begin{equation} \label{Decomp:Ch}
\iota_* \Gysin{\bar{V}^{p},\bar{\sigma}} = \mathrm{td}(\bar{V}^{p}|_{Z(\bar{\sigma}_2)})\cdot \mathrm{ch}^{\bar{V}^{p}(\bar{\sigma})}_{Z(\bar{\sigma}_2)} (\{q^*\bar{\sigma}_2,(q'_2)^*t_{\bar{V}_{2}^{p}}\} \otimes \{0,(q'_1)^*t_{\bar{V}_{1}^{p}}\} ).
\end{equation}
By \cite[Proposition 2.3(vi)]{PV00topchern},
\begin{align*}
&\mathrm{ch}^{\bar{V}^{p}(\bar{\sigma})}_{Z(\bar{\sigma}_2)} (\{q^*\bar{\sigma}_2,(q'_2)^*t_{\bar{V}_{2}^{p}}\} \otimes \{0,(q'_1)^*t_{\bar{V}_{1}^{p}}\} ) \\
&= \mathrm{ch}^{\bar{V}^{p}(\bar{\sigma})}_{q_1^{-1}(Z(\bar{\sigma}_2))} (\{q^*\bar{\sigma}_2,(q'_2)^*t_{\bar{V}_{2}^{p}}\}) \circ \mathrm{ch}^{\bar{V}^{p}(\bar{\sigma})}_{q_2^{-1}(\PP ) \cap \bar{V}^{p}(\bar{\sigma})} (\{0,(q'_1)^*t_{\bar{V}_{1}^{p}}\}).
\end{align*}
Note that $q_2^{-1}(\PP ) \cap \bar{V}^{p}(\bar{\sigma}) =\bar{V}^{p}_2(\bar{\sigma}_2)$.
Thus, the equation \eqref{Decomp:Ch} becomes
\begin{align*}
\iota_* \Gysin{\bar{V}^{p},\bar{\sigma}} =& \mathrm{td}(\bar{V}^{p}|_{Z(\bar{\sigma}_2)})\cdot \mathrm{ch}^{\bar{V}^{p}(\bar{\sigma})}_{Z(\bar{\sigma}_2)} (\{q^*\bar{\sigma}_2,(q'_2)^*t_{\bar{V}_{2}^{p}}\} \otimes \{0,(q'_1)^*t_{\bar{V}_{1}^{p}}\} ) \\
 = & \mathrm{td}(\bar{V}_1^{p}|_{Z(\bar{\sigma}_2)})\cdot \mathrm{ch}^{\bar{V}^{p}(\bar{\sigma})}_{q_1^{-1}(Z(\bar{\sigma}_2))} (\{q^*\bar{\sigma}_2,(q'_2)^*t_{\bar{V}_{2}^{p}}\}) \\
& \circ 
\mathrm{td}(\bar{V}_2^{p}|_{Z(\bar{\sigma}_2)})\cdot \mathrm{ch}^{\bar{V}^{p}(\bar{\sigma})}_{q_2^{-1}(\PP ) \cap \bar{V}^{p}(\bar{\sigma})} (\{0,(q'_1)^*t_{\bar{V}_{1}^{p}}\})\\
=&  \Gysin{\bar{V}_2^{p},\bar{\sigma}_2}\circ \Gysin{\bar{V}_1^{p}} .
\end{align*}
Here, the last equality comes from \cite[Theorem 2.6]{KO18localized}.
\end{proof}

As a corollary, we have the following proposition. 
For an integral closed substack $Z$ of $V^{p}(\sigma) \subset   |V^{p}|$, let $\bar{Z}$ be the closure in $|\bar{V}^{p}|$. 
Here, we identify $|V^{p}|$ with $|\bar{V}^{p} |_{\PP \backslash D_\infty} |$, which is a dense open subset of $|\bar{V}^{p}|$.
Let $\bar{Z}_b:=\bar{Z}\cap |  0\oplus \bar{V}^{p}_2  |$. 
We consider a normal cone $C_{\bar{Z}_b/\bar{Z}}$, which is naturally embedded in $|\bar{V}^{p}|$ as a closed substack.
\begin{prop}\label{anallocalGysin2}
We have 
\begin{align*}
\Gysin{V^{p}, \sigma} [Z] = \bar{\gamma}_* \Gysin{\bar{V}^{p}_{2},\bar{\sigma}_2}  \Gysin{\bar{V}^{p}_{1}} [\bar{Z}]  &= \bar{\gamma}_* \Gysin{\bar{V}^{p}_{2},\bar{\sigma}_2}\Gysin{\bar{V}^{p}_{1}} [C_{\bar{Z}_b/\bar{Z}}] \\
& \in A_*(Z(\bar{\sigma}))_\QQ = A_*(\wtil{\cM}^{rat}_Q )_\QQ.
\end{align*}
\end{prop}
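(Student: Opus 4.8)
\emph{Overview and reduction of the first equality.} The plan is to prove Proposition~\ref{anallocalGysin2} by establishing its two equalities separately. For the first, since $Z$ is closed in $V^{p}(\sigma)$ we have $\bar{Z}\subseteq\bar{V}^{p}(\bar{\sigma})$ and $\bar{Z}\cap|V^{p}|=Z$. As $Q$ is smooth, the Jacobian $(\partial f_i/\partial u_j)$ has maximal rank along $Q$, so on the locus where $\sigma$ (resp.\ $\bar{\sigma}$) degenerates the $p$-field must vanish; hence $Z(\sigma)=Z(\bar{\sigma})=\wtil{\cM}^{rat}_{Q}$, which, being the zero section of $|A_2|$ over $\wtil{\cM}^{rat}_{Q}$, is contained in $\PP\setminus D_{\infty}$. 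Applying Proposition~\ref{anallocalGysin} gives $\bar{\gamma}_*\Gysin{\bar{V}^{p}_{2},\bar{\sigma}_2}\Gysin{\bar{V}^{p}_{1}}[\bar{Z}]=\bar{\gamma}_*\iota_*\Gysin{\bar{V}^{p},\bar{\sigma}}[\bar{Z}]$, and since $\bar{\gamma}|_{Z(\bar{\sigma}_2)}\colon\PP|_{\wtil{\cM}^{rat}_{Q}}\to\wtil{\cM}^{rat}_{Q}$ is the $\PP^{m}$-bundle projection while $\iota$ is its zero section, $\bar{\gamma}\circ\iota=\id$ and this equals $\Gysin{\bar{V}^{p},\bar{\sigma}}[\bar{Z}]$. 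It therefore remains to prove $\Gysin{V^{p},\sigma}[Z]=\Gysin{\bar{V}^{p},\bar{\sigma}}[\bar{Z}]$ in $A_*(\wtil{\cM}^{rat}_{Q})_{\QQ}$.

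\emph{Comparing the open and the compactified localized classes.} For this I would use the open immersion $j\colon|V^{p}|\hra|\bar{V}^{p}|$ identifying $|V^{p}|$ with the part of $|\bar{V}^{p}|$ over $\PP\setminus D_{\infty}$; along $j$ the bundle $\bar{V}^{p}$, the cosection $\bar{\sigma}$, the tautological section and the projection $q\colon|\bar{V}^{p}|\to\PP$ restrict to $V^{p}$, $\sigma$, the tautological section and the projection $|V^{p}|\to\wtil{\cM}^{p,rat}$. Hence, by the formula $\Gysin{\bar{V}^{p},\bar{\sigma}}=\mathrm{td}(\bar{V}^{p}|_{Z(\bar{\sigma})})\cdot\mathrm{ch}^{\bar{V}^{p}(\bar{\sigma})}_{Z(\bar{\sigma})}(\{q^*\bar{\sigma},t_{\bar{V}^{p}}\})$ of \cite[Theorem~2.6]{KO18localized} and the compatibility of localized Chern characters with open restriction \cite[Proposition~2.3]{PV00topchern}, the operation $\Gysin{\bar{V}^{p},\bar{\sigma}}$ restricts along $j$ to $\Gysin{V^{p},\sigma}$. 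Using $j^*[\bar{Z}]=[\bar{Z}\cap|V^{p}|]=[Z]$, the equality $Z(\bar{\sigma})=Z(\sigma)$, and the fact that restriction along $j$ is the identity on $A_*(Z(\bar{\sigma}))=A_*(\wtil{\cM}^{rat}_{Q})$, we conclude $\Gysin{\bar{V}^{p},\bar{\sigma}}[\bar{Z}]=\Gysin{V^{p},\sigma}[Z]$, completing the first equality.

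\emph{The second equality.} It suffices to show $\Gysin{\bar{V}^{p}_{1}}[\bar{Z}]=\Gysin{\bar{V}^{p}_{1}}[C_{\bar{Z}_b/\bar{Z}}]$ in $A_*(\bar{V}^{p}_{2}(\bar{\sigma}_2))_{\QQ}$, since then applying $\bar{\gamma}_*\Gysin{\bar{V}^{p}_{2},\bar{\sigma}_2}$ to both sides gives the claim. By the fibre square of Proposition~\ref{anallocalGysin}, $\Gysin{\bar{V}^{p}_{1}}$ is the refined Gysin homomorphism $0^!_{\bar{V}^{p}_{1}}$ of the zero section $0_{\bar{V}^{p}_{1}}\colon\PP\hra|\bar{V}^{p}_{1}|$ pulled back along $\bar{Z}\hra|\bar{V}^{p}|\to|\bar{V}^{p}_{1}|$, and the preimage of $\PP$ in $\bar{Z}$ is exactly $\bar{Z}_b=\bar{Z}\cap|0\oplus\bar{V}^{p}_{2}|$. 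By the deformation-to-the-normal-cone construction of $0^!$, both $\Gysin{\bar{V}^{p}_{1}}[\bar{Z}]$ and $\Gysin{\bar{V}^{p}_{1}}[C_{\bar{Z}_b/\bar{Z}}]$ equal the intersection with the zero section of the cone $C_{\bar{Z}_b/\bar{Z}}\subseteq q_2^{*}\bar{V}^{p}_{1}|_{\bar{Z}_b}\subseteq|\bar{V}^{p}|$ (using that the normal cone of the vertex section of a cone is that cone), so they agree.

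\emph{Main obstacle.} I expect the delicate step to be the identity $\Gysin{V^{p},\sigma}[Z]=\Gysin{\bar{V}^{p},\bar{\sigma}}[\bar{Z}]$: one must check carefully that the localized class on $\PP$ receives no contribution from the part of $\bar{Z}$ lying over $D_{\infty}$ — which is precisely why one uses both $Z(\bar{\sigma})\subseteq\PP\setminus D_{\infty}$ and the localization property of localized Chern characters — and then invoke the correct restriction statement. Granting Proposition~\ref{anallocalGysin} and the standard properties of refined Gysin maps, the remainder is formal.
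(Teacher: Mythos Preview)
Your proof is correct and follows essentially the same route as the paper: apply Proposition~\ref{anallocalGysin}, use $\bar{\gamma}\circ\iota=\id$, and identify $\Gysin{V^{p},\sigma}[Z]$ with $\Gysin{\bar{V}^{p},\bar{\sigma}}[\bar{Z}]$ via compatibility of the localized class with the open inclusion $|A_2|\hookrightarrow\PP$ (the paper states this as ``compatibility with the flat pullback''; your explicit use of $Z(\bar{\sigma})\subset\PP\setminus D_\infty$ is the same point spelled out). For the second equality the paper also reduces to $\Gysin{\bar{V}^{p}_{1}}[\bar{Z}]=\Gysin{\bar{V}^{p}_{1}}[C_{\bar{Z}_b/\bar{Z}}]$ but justifies it via the $\mathbb{A}^1$-homotopy invariance of localized Chern characters \cite[Lemma~2.1]{PV00topchern} applied to the deformation space $M_{\bar{Z}_b/\bar{Z}}$, whereas you invoke the equivalent Fulton-style description of the refined Gysin map together with the standard fact that the normal cone of the vertex section in a cone is the cone itself; these are two phrasings of the same deformation argument.
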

\begin{proof}
By applying $\bar{\gamma}_*$ to Proposition \ref{anallocalGysin}, we have
\[
\bar{\gamma}_* \iota_* \Gysin{\bar{V}^{p},\bar{\sigma}} = \bar{\gamma}_* \Gysin{\bar{V}_2^{p},\bar{\sigma}_2}\circ \Gysin{\bar{V}_1^{p}}.
\]
Since $\iota \circ \bar{\gamma} = \id$, we have
\[
\Gysin{\bar{V}^{p},\bar{\sigma}}[\bar{Z}] = \bar{\gamma}_* \Gysin{\bar{V}_2^{p},\bar{\sigma}_2}\circ \Gysin{\bar{V}_1^{p}}[\bar{Z}]
\]
for any irreducible sublocus $Z \subset V^{p}(\sigma)$. 
It is clear that 
$\Gysin{\bar{V}^{p},\bar{\sigma}}[\bar{Z}] = \Gysin{V^{p},\sigma}[Z]$ by the compatibility with the flat pullback $i : |A_2| \hra \PP$ and the bivariant class $\Gysin{\bar{V}^{p},\bar{\sigma}}$. The second equality comes from the following equality:
\begin{align*}
\Gysin{\bar{V}^{p}_{1}} [\bar{Z}] = \Gysin{\bar{V}^{p}_{1}} [C_{\bar{Z}_b/\bar{Z}}] \text{ in } A_*(\bar{V}^{p}_2(\bar{\sigma}_2))_\QQ .
\end{align*}
Indeed, it follows from \cite[Lemma 2.1]{PV00topchern} for an $\mathbb{A}^1$-family of classes that
$$\mathrm{ch}^{M_{\bar{Z}_b / \bar{Z}}}_{\bar{Z}_b \times \mathbb{A}^1 } ( \phi^* \{ 0, (q'_1)^* t_{\bar{V}_1^{p}  }\} ) [M_{\bar{Z}_b / \bar{Z}}] \in A_*(\bar{Z}_b \times \mathbb{A}^1)_\QQ$$
where $M_{\bar{Z}_b / \bar{Z}}$ is the space of the deformation of $\bar{Z}$ to the normal cone $C_{\bar{Z}_b / \bar{Z}}$ and $\phi: M_{\bar{Z}_b / \bar{Z}} \subset |\bar{V}^{p}| \times \mathbb{A}^1 \to |\bar{V}^{p}| $ is the composition of an inclusion and a projection.
Note that $\bar{Z}_b \subset \bar{V}_2^{p}(\bar{\sigma}_2)$ since we take $Z \in Z_*(V^{p}(\sigma))$.
\end{proof}

For a decomposition by integral cycles $[C^{p,\mu}] = \sum_i [W^\mu_i] \in A_* (V^{p} (\sigma  ))$, $\mu \in \fS$, let
$R^{\mu}_1 := \sum_i [C_{(\bar{W}^{\mu}_{i})_b/\bar{W}^{\mu}_{i} }]$.
By Proposition \ref{anallocalGysin2}, we have
\begin{align} \label{FIRST:CONE}
\Gysin{V^{p}, \sigma} [C^{p,\mu}] =  \bar{\gamma}_* \Gysin{\bar{V}^{p}_{2},\bar{\sigma}_2}\Gysin{\bar{V}^{p}_{1}} [R^\mu_1].
\end{align}
By Proposition \ref{conesupp}, the cycle $\Gysin{\bar{V}^{p}_{1}} [R^\mu_1]$ lies on  $\wtil{\cM}^{p, rat}$.
Let $B^\mu_1 := \Gysin{\bar{V}^{p}_{1}} [R^\mu_1] \in A_*(\wtil{\cM}^{p, rat})_\QQ$.
Similarly, for a decomposition by integral cycles $[C^{p,\mu}_\Delta] = \sum_i [Z^\mu_i] \in A_* (V^{p} (\sigma  ))$, $\mu \in \fS$, let
$R^{\mu}_2 := \sum_i [C_{(\bar{Z}^{\mu}_{i})_b/\bar{Z}^{\mu}_{i} }]$.
By Proposition \ref{anallocalGysin2}, we have
\begin{align} \label{SECOND:CONE}
\Gysin{V^{p}, \sigma} [C^{p,\mu}_\Delta] =  \bar{\gamma}_* \Gysin{\bar{V}^{p}_{2},\bar{\sigma}_2}\Gysin{\bar{V}^{p}_{1}} [R^\mu_2].
\end{align}
By Proposition \ref{conesupp}, the cycle $\Gysin{\bar{V}^{p}_{1}} [R^\mu_2]$ lies on $|\bar{\gamma}^*F|$.
Let $B^\mu_2:=\Gysin{\bar{V}^{p}_{1}} [R^\mu_2] \in A_*(|\bar{\gamma}^*F| )_\QQ.$

Now, we compute the dimensions of $B^{\mu}_{ i}$. 
By using local defining equations, we see that $\wtil{\cM}^{\mu}$ are pure with the same dimension for all $\mu$. 
Thus, we may assume that $\mu = \{(d, [k])\}$. 
We have
\begin{align*}
 \dim \wtil{\cM}^{\mu}  = \dim\wtil{\cM}^{\{(d,[k])\}} &= \dim(\bar{M}_{0,k+1}(\PP^n,d) \times \bar{M}_{1,1})\\
 & = (d+1)(n+1)+k-2.\nonumber
\end{align*}
Hence, we have $\dim\wtil{\cM}^{p,\mu} = (d+1)(n+1)+k+m-2$. 
By Proposition \ref{conestr}, we deduce that $C^{\mu}$ has dimension $(d+1)(n+1)+k+m$. 
On the other hand, we have $\rank V_{1}^{p} = \rank V_1 = n+1$. 
Therefore, we have
\begin{equation} \label{dimensioncount}
B^\mu_{1} \in A_{d(n+1)+k+m}(\wtil{\cM}^{p,rat}), \ B^\mu_{2} \in A_{d(n+1)+k+m}(| \bar{\gamma}^* F  | ).
\end{equation}
By \eqref{CONE:DECOMP}, \eqref{FIRST:CONE}, \eqref{SECOND:CONE} and Proposition \ref{anallocalGysin2}, we have 
\begin{align} \label{middle:summary}
\Gysin{\bdst{E^p},\sigma}[\fC^{\mu}] = \Gysin{V^{p}, \sigma  }[C^{\mu}] = \bar{\gamma}_* \Gysin{\bar{V}^{p}_{2},\bar{\sigma}_2}(B^{\mu}_1)  + \bar{\gamma}_*\Gysin{\bar{V}^{p}_{2},\bar{\sigma}_2}(B^{\mu}_{2}).
\end{align}

\subsubsection{Contribution of $\bar{\gamma}_*\Gysin{\bar{V}^{p}_{2},\bar{\sigma}_2}(B^{\mu}_{2})$}

Now, we will show that $\bar{\gamma}_*\Gysin{\bar{V}^{p}_{2},\bar{\sigma}_2}(B^{\mu}_{2}) = 0$ for each $\mu \in \fS$.
Consider the projection $| \bar{\gamma}^* F| \to |F|$ induced by the projection $\bar{\gamma} : \PP \to \wtil{\cM}^{rat}$. 
Note that $\bar{\gamma}$ is proper. 
We have $\bar{\gamma}_*\Gysin{\bar{V}^{p}_{2},\bar{\sigma}_2}(B^{\mu}_{2}) = \Gysin{V_{2},\xi_2}(\bar{\gamma}_* B^{\mu}_{2})$. 
By \eqref{dimensioncount}, $\bar{\gamma}_* B^\mu_{2} \in A_{d(n+1)+k+m}( | F  |  )$. 
We can observe that the dimension of $|F |$ is equal to \begin{align*}  
\dim \Theta+ m = \dim \wtil{\cM}^\mu- n+m
= d(n+1) + k+ m -1.
\end{align*}
So we have $A_{d(n+1)+k+m}( | F | )=0$ and $\bar{\gamma}_* B^\mu_{2}=0$.

\subsubsection{Contribution of $\bar{\gamma}_*\Gysin{\bar{V}^{p}_{2},\bar{\sigma}_2}(B^{\mu}_{1})$}

\begin{prop}\label{vanishing}
If $Q$ is 2-fold, 
\begin{align} \label{VANISHING}
\bar{\gamma}_* \Gysin{\bar{V}^{p}_{2},\bar{\sigma}_2}(B^\mu_{1}) \cap \prod\limits_{j=0}^k ev_j^*(\alpha_j)=0
\end{align}
for any cohomology classes $\alpha_j \in H^*(Q, \QQ)$. 
The same equation \eqref{VANISHING} holds true if $Q$ is 3-fold and $\mu \neq \{(d,[k])\}$.
\end{prop}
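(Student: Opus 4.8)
The plan is to push the class along the evaluation morphism and reduce to a dimension count on a genus-zero moduli space. Write $\mu = \{(d_1,L_1),\dots,(d_\ell,L_\ell)\}$ with $d_1+\dots+d_\ell=d$, and put $L_0:=[k]\setminus(L_1\sqcup\dots\sqcup L_\ell)$ for the set of legs carried by the contracted genus-one ghost. Abbreviate $\Phi_\mu:=\bar\gamma_*\Gysin{\bar V^{p}_{2},\bar\sigma_2}(B^\mu_1)$ and $\delta:=c_1(T_Q)\cap d + k$. By \eqref{middle:summary} together with the vanishing $\bar\gamma_*\Gysin{\bar V^{p}_{2},\bar\sigma_2}(B^\mu_2)=0$ established just above, $\Phi_\mu=\Gysin{\bdst{E^p},\sigma}[\fC^\mu]$, which is (up to sign) a summand of $[\bar{M}_{1,k}(Q,d)]\virt$; hence $\Phi_\mu$ is a cycle of dimension $\delta$ supported on $\wtil{\cM}^{\mu}_Q:=\wtil{\cM}^{\mu}\times_{\bar{M}_{1,k}(\PP^n,d)}\bar{M}_{1,k}(Q,d)$.

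First I would exhibit the factorization of the evaluation maps. A general point of $\wtil{\cM}^{\mu}_Q$ is a contracted genus-one ghost carrying the legs in $L_0$ and $\ell$ attaching nodes, with $\ell$ genus-zero tails of degree $d_i$ carrying the legs in $L_i$ glued on; since the ghost is contracted, $ev_j$ for $j\in L_0$ is the common image of the attaching nodes and $ev_j$ for $j\in L_i$ is read off the $i$-th tail. Hence $(ev_1,\dots,ev_k)$ factors as $\wtil{\cM}^{\mu}_Q\xrightarrow{\;f\;}\cY_\mu\to Q^k$, where $\cY_\mu$ is a product of Kontsevich spaces: when $L_0=\varnothing$ one forgets the node-markings and takes $\cY_\mu=\prod_i\bar{M}_{0,L_i}(Q,d_i)$, and when $L_0\neq\varnothing$ one keeps the attaching node of the first tail and takes $\cY_\mu=\bar{M}_{0,L_1\sqcup\{\star\}}(Q,d_1)\times\prod_{i\ge 2}\bar{M}_{0,L_i}(Q,d_i)$, the maps $ev_j$ for $j\in L_0$ becoming $ev_\star$ on the first factor. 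Then $\prod_j ev_j^*\alpha_j=f^*\eta$ for a class $\eta$ on $\cY_\mu$ of codimension $\sum_j\dim_{\CC}\alpha_j$, and since $f$ is proper the projection formula gives $\deg\!\big(\Phi_\mu\cap\prod_j ev_j^*\alpha_j\big)=\deg\!\big(\eta\cap f_*\Phi_\mu\big)$; so it suffices to prove $f_*\Phi_\mu=0$, for which (as $\Phi_\mu$ has dimension $\delta$) it is enough that $\dim\overline{f(\wtil{\cM}^{\mu}_Q)}<\delta$.

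The main step is the dimension count. From $\dim\bar{M}_{0,r}(Q,e)=c_1(T_Q)\cap e+\dim Q-3+r$ one obtains $\dim\cY_\mu=\delta+\ell(\dim Q-3)$ if $L_0=\varnothing$ and $\dim\cY_\mu=\delta+\ell(\dim Q-3)+1-|L_0|$ if $L_0\neq\varnothing$. For $\dim Q=2$ this is $<\delta$ for every $\mu$, so $f_*\Phi_\mu=0$ and the first case of the proposition follows. For $\dim Q=3$ one has $\dim\cY_\mu<\delta$ except when $\ell=1$ and $|L_0|\le 1$, i.e.\ except for $\mu=\{(d,[k])\}$ and $\mu=\{(d,L_1)\}$ with $|L_1|=k-1$; moreover, when $\ell\ge 2$ the morphism $f$ is not dominant, because a point of $\cY_\mu$ lies in its image only if the $\ell$ tail curves pass through a common point of $Q$, which is a closed condition of codimension at least one, so $\dim\overline{f(\wtil{\cM}^{\mu}_Q)}\le\dim\cY_\mu-1<\delta$ there as well. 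This disposes of every $\mu\neq\{(d,[k])\}$ except the case $\dim Q=3$, $\mu=\{(d,L_1)\}$, $|L_1|=k-1$.

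For this remaining case — which I expect to be the crux — one has $\ell=1$ and $L_0=\{j_0\}$, so $\cY_\mu=\bar{M}_{0,L_1\sqcup\{\star\}}(Q,d)$, of dimension exactly $\delta$, and at the level of cycles (on an \'etale chart) $\wtil{\cM}^{\mu}_Q\cong\cY_\mu\times\bar{M}_{1,2}$ with $f$ the first projection, the factor $\bar{M}_{1,2}$ parametrizing the ghost with its attaching node and the leg $j_0$. The plan is to show the ghost factor of $\Phi_\mu$ is a polynomial in $\lambda_1$. Along this product the obstruction complex \eqref{POT:pfield} splits, and over the $\bar{M}_{1,2}$-factor only the summand $\RR\pi_{*}ev^*\cO_{\PP^n}(1)^{\oplus(n+1)}$ survives, equal to $(\EE^\vee)^{\oplus(n+1)}$ with $\EE$ the Hodge bundle (the summand $\oplus_i\RR\pi_{*}(ev^*\cO_{\PP^n}(-\deg f_i)\otimes\omega_\pi)$ restricts to zero in $\RR^1\pi_*$ on the contracted elliptic curve, contributing to $\RR^0\pi_*$ instead), while the restriction of the cosection \eqref{CoSection} to the ghost factor vanishes since $f_i$ vanishes along $Q$. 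Using $\lambda_1^2=0$ (as $\EE$ is pulled back from $\bar{M}_{1,1}$), one concludes that $\Phi_\mu$ is a finite sum of terms $f^*\Psi^{(a)}\cdot p_2^*\lambda_1^{a}$ with $a\in\{0,1\}$, whence $f_*\Phi_\mu=\sum_a\Psi^{(a)}\cap p_{1*}(p_2^*\lambda_1^{a})$; each $p_{1*}(p_2^*\lambda_1^{a})$ vanishes because $p_2^*\lambda_1^{a}$ has dimension $\ge\dim\cY_\mu+1$ on the product and $p_1$ maps onto the smaller-dimensional $\cY_\mu$. This gives $f_*\Phi_\mu=0$ and finishes the proof. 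The technical obstacle is to make this splitting precise, i.e.\ that $\fC^\mu$, and hence $\Phi_\mu$, decompose along $\cY_\mu\times\bar{M}_{1,2}$ with no $\psi$- or boundary classes of the ghost markings entering; here one must use the explicit local equations of Section~\ref{hulilocaleq} on this stratum.
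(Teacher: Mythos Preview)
Your dimension count is the gap. The formula $\dim\bar{M}_{0,r}(Q,e)=c_1(T_Q)\cap e+\dim Q-3+r$ you invoke is the \emph{virtual} dimension, not the actual one. For a complete intersection $Q\subset\PP^n$ there is no reason for $\bar{M}_{0,r}(Q,e)$ to be irreducible or of expected dimension (think of multiple covers of rational curves on a Calabi--Yau threefold, where the virtual dimension is $r$ but the actual moduli has positive-dimensional excess). Consequently $\dim\cY_\mu$ need not equal the number you compute, and the inequality $\dim\overline{f(\wtil{\cM}^\mu_Q)}<\delta$ does not follow. The same objection applies to your $\ell\ge2$ refinement and to your ``remaining case'' where you assert $\dim\cY_\mu=\delta$ exactly.

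The paper sidesteps this by pushing forward \emph{before} applying the localized Gysin map, and to a moduli space over $\PP^n$ rather than $Q$. Concretely, it builds a proper map $r_\mu:\wtil{\cM}^\mu\to X_\mu$, where $X_\mu$ is a fiber product over $\PP^n$ of spaces $\bar{M}_{0,k_i+1}(\PP^n,d_i)$ (or, for $\ell=1$, simply $\bar{M}_{0,k_1}(\PP^n,d)$), and shows that the bundle $V_2$ and the cosection $\xi_2$ are pulled back from a bundle $W_\mu^\vee$ and cosection $\alpha$ on $X_\mu$. Bivariance then gives $(\nu_\mu)_*\Gysin{\bar V_2^p,\bar\sigma_2}(B^\mu_1)=\Gysin{W_\mu^\vee,\alpha}((\nu'_\mu)_*B^\mu_1)$, and the vanishing comes from $(\nu'_\mu)_*B^\mu_1\in A_{(n+1)d+k+m}(X_\mu)$ together with a count of the \emph{actual} dimension of $X_\mu$ (genus-zero moduli over $\PP^n$ are smooth of expected dimension). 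In particular, your ``remaining case'' $\mu=\{(d,L_1)\}$ with $|L_1|=k-1$ falls out of the same computation (the leg on the ghost drops $\dim X_\mu$ by one), so no separate $\lambda_1$ argument is needed; your sketch there also misidentifies the contribution of $\RR^1\pi_*(ev^*\cO(-\deg f_i)\otimes\omega_\pi)$ on the contracted elliptic curve, which is one-dimensional by Serre duality rather than zero.
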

\begin{proof}
Let $\mu = \{(d_1,L_1),...,(d_{\ell},L_{\ell})\}$, $k_i := |L_i|$, $k_0 := k- \sum\limits_{i=1}^{\ell} k_i$ and $S_{\mu}$ be a subgroup of the symmetric group $S_{\ell}$, which permutes $(d_1,\dots,d_{\ell})$ but does not changes the element $\mu$. 
If $\mu$ is not of the form $\{(d,L_1)\}$, 
we define 
\[
\what{X}_{\mu} := \left(\bar{M}_{0,k_1+1}(\PP^n,d_1) \times_{\PP^n} \dots \times_{\PP^n} \bar{M}_{0,k_{\ell}+1}(\PP^n,d_{\ell})\right)
\]
and define $X_{\mu}:= \what{X}_{\mu}/S_{\mu}$
where each morphism $\bar{M}_{0,k_j+1}(\PP^n,d_j) \to \PP^n$ is a $(k_j+1)$-th evaluation map. 
In particular, when $\mu$ is of the form $\{(d,L_1)\}$, $X_{\mu}$ is isomorphic to $\bar{M}_{0,k_1}(\PP^n,d)$.

Next, we construct a proper morphism $r_{\mu} : \wtil{\cM}^{\mu} \to X_{\mu}$ by the following. 
For a closed point $x \in \wtil{\cM}^{\mu}$, let $\cC_x := \cC_{\wtil{\cM}^{\mu}} \times_{\wtil{\cM}^{\mu}} \{x\}$ and let $f_x : \cC_x \to \PP^n$ be the tautological morphism. 
The curve $\cC_x$ is a union of the genus one subcurve and $\ell$ genus $0$ curves $\cC_{x,1},\dots,\cC_{x,\ell}$ attached to the genus 1 subcurve. 
Let $f_{x,j} := f_x|_{\cC_{x,j}}$, where there are $k_j$ marked points on each of them. 
For each $\cC_{x,j}$ we can consider $k_j+1$ marked points, where first $k_j$-points are original ones and the last $(k_j+1)$-th point is a node which meets the genus one curve. 
Here, we define $r_{\mu}(x) := [ \ (\cC_{x,1},f_{x,1}),\dots,(\cC_{x,\ell},f_{x,\ell}) \ ] \in X_{\mu}$.

We construct a rank $(\sum^m_{i=1} \deg f_i)d + m$-vector bundle $W_{\mu}$ on $X_{\mu}$ such that $r_{\mu}^* W_{\mu}\dual \cong V_{2}$. 
Let $\pi_j : \cC_j \to \bar{M}_{0,k_j + 1}(\PP^n,d_j)$ be a universal curve, $p_j : \what{X}_{\mu} \to \bar{M}_{0,k_j + 1}(\PP^n,d_j)$ be the natural projection, $\phi_j : \cC_j \to \PP^n$ be the universal morphism, and $ev_j : \bar{M}_{0,k_j + 1}(\PP^n,d_j) \to \PP^n$ be the evaluation morphism at $(k_j + 1)$-th marked point. 
Since $\what{X}_{\mu}$ is defined as a fiber product via $(k_j +1)$-th evaluation maps, there is an induced single evaluation morphism $ev : \what{X}_{\mu} \to \PP^n$.
Let us define $W_{j,i}:=\pi_{j *}\phi_j^*\cO_{\PP^n}(\deg f_i)$.
We construct a vector bundle $\what{W}_{\mu,i}$ on $\what{X}_{\mu}$ which fits into the following short exact sequence
\[
\ses{\what{W}_{\mu,i}}{\oplus^{\ell}_{j=1}p_j^*W_{j,i}}{ev^*\left(\cO(\deg f_i)^{\oplus\ell}/\cO(\deg f_i)\right)}.
\]
Next we define a section $\wtil{\alpha}\dual_i : \cO \to \what{W}_{\mu,i}$. 
Consider the following diagram
\[
\xymatrix{
0 \ar[r] & \what{W}_{\mu,i} \ar[r] & \oplus^{\ell}_{j=1} p_j^* W_{j,i} \ar[r]^(0.37){e} &  ev^*\left(\cO(w_i)^{\oplus\ell}/\cO(w_i)\right) \ar[r] & 0 \\
& \cO \ar@{..>}[u]_{\wtil{\alpha}\dual_i} \ar[r]_{\triangle_i} & \cO^{\oplus \ell} \ar[u]_{\oplus_j f_i} & &
}
\]
where $\oplus_j f_i$ is a map sending each component of the standard basis $e_j$ to $f_i \in p_j^*W_{j,i} = p_j^*(\pi_* \phi_j^*(\cO_{\PP^n}(\deg f_i)))$ and $\triangle_i$ is the diagonal morphism. Since $e\circ \oplus_j f_i \circ \triangle_i = 0$, there exists the induced morphism $\wtil{\alpha}\dual_i : \cO \to \what{W}_{\mu,i}$.
Thus we have $\wtil{\alpha}^\vee:= \oplus_i \wtil{\alpha_i}^\vee : \cO \to \what{W}_\mu := \oplus_i \what{W}_{\mu,i}$.
We see that $\what{W}_\mu$ and $\wtil{\alpha}^\vee$ are $S_\mu$-equivariant.
Let $W_\mu:=\what{W}_\mu / S_\mu$ be a vector bundle on $X_\mu$ and $\alpha:=\wtil{\alpha} /S_\mu : W_\mu^\vee \to \cO$ be a cosection.
We can check that $r_\mu^* W_\mu^\vee \cong V_2$ and $r_\mu^* \alpha = \xi_2$.
We note that $Z(\alpha) = \left(\bar{M}_{0,k_1+1}(Q,d_1) \times_{\PP^n} \dots \times_{\PP^n} \bar{M}_{0,k_{\ell}+1}(Q,d_{\ell})\right) / S_\mu$.

Let $\nu_{\mu}$ be a proper morphism defined by the composition $\PP \stackrel{\bar{\gamma}}{\lra} \wtil{\cM}^{\mu} \stackrel{r_{\mu}}{\lra} X_{\mu}$. 
There is an induced morphism of total spaces of vector bundles 
$$\nu'_{\mu}: |\bar{V}^{p}_{2}| = |\bar{\gamma}^*V_{2}| \to |V_{2}|=|r_{\mu}^* W_{\mu}\dual| \to |W_{\mu}\dual|.$$ 
Finally, we have
\[ (\nu_{\mu})_*  \Gysin{\bar{V}_{2}^{p},\bar{\sigma}_2}(B^{\mu}_1) = \Gysin{W_{\mu}\dual, \alpha}((\nu'_{\mu})_*(B^{\mu}_1)).
\]
We can check that $(\nu'_{\mu})_*(B^{\mu}_{1}) \in A_{(n+1)d+k+m}(X_{\mu})$ and that

\[
\dim X_{\mu} =
\left\{ \begin{array}{ll}
(n+1)d-2\ell+n+k-k_0 & \textrm{if $\mu$ is not of the form $\{(d,L_1)\}$} \\
(n+1)d+(n-3)+k-k_0 & \textrm{if $\mu$ is of the form $\{(d,L_1)\}$}.
\end{array} \right.
\]

\medskip

\noindent Recall that $\dim Q=n-m$.
If $Q$ is $2$-fold, then $\dim X_{\mu}$ is strictly smaller than $(n+1)d+k+m$. 
Hence, $(\nu_{\mu})_* \Gysin{\bar{V}_{2}^{p},\bar{\sigma}_2}(B^{\mu}_{1}) = 0$.
If $Q$ is $3$-fold, $(\nu_{\mu})_*  \Gysin{\bar{V}_{2}^{p},\bar{\sigma}_2}(B^{\mu}_{1})= (r_{\mu})_* \bar{\gamma}_*  \Gysin{\bar{V}_{2}^{p},\bar{\sigma}_2}(B^{\mu}_{1})=0$ except for the case when $\mu=\{(d,[k])\}$. 
Let us abbreviate $\prod\limits_{j=0}^k ev_j^*(\alpha_j)$ by $ev^*(\alpha)$. 
Then we have
\begin{align*}
 \bar{\gamma}_*  \Gysin{\bar{V}^{p}_{2},\bar{\sigma}_2}(B^{\mu}_{1})\cap ev^*(\alpha) &= (r_{\mu})_* (\bar{\gamma}_*  \Gysin{\bar{V}^{p}_{2},\bar{\sigma}_2}(B^{\mu}_{1})\cap ev^*(\alpha)  )
\\
&= (r_{\mu})_* \bar{\gamma}_* (\Gysin{\bar{V}^{p}_{2},\bar{\sigma}_2}(B^{\mu}_{1})\cap ev^*(\alpha)  )  \\
& =(\nu_\mu)_*    (\Gysin{\bar{V}^{p}_{2},\bar{\sigma}_2}(B^{\mu}_{1})\cap ev^*(\alpha)  )\\
& =\Gysin{W_{\mu}\dual,\alpha}((\nu_{\mu})_*(B^{\mu}_{1})) \cap  ev^*(\alpha) = 0
\end{align*}
unless $Q$ is 3-fold and $\mu=\{(d,[k])\}$.
\end{proof}

\begin{prop} \label{Cycle:Equi}
When $Q$ is a $3$-fold and $\mu = \{(d,[k])\}$, we have
\begin{align*} 
(r_\mu)_* \bar{\gamma}_* \Gysin{\bar{V}^{p}_{2}, \bar{\sigma}_2 }(B^{\mu}_{1}) 
= (-1)^{d\sum_i \deg f_i+m} c [\bar{M}_{0,k}(Q,d)]^{vir}
\end{align*}
in $A_*(\bar{M}_{0,k}(Q,d))_\QQ$
for some constant $c \in \QQ$.
\end{prop}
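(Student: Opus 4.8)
The plan is to recycle the reduction carried out in the proof of Proposition~\ref{vanishing}. For $\mu=\{(d,[k])\}$ the space $X_\mu$ constructed there is the irreducible variety $\bar{M}_{0,k}(\PP^n,d)$, the bundle $W_\mu$ identifies with $\bigoplus_i\pi_*ev^*\cO_{\PP^n}(\deg f_i)$ (so $\rank W_\mu=d\sum_i\deg f_i+m$), $Z(\alpha)=\bar{M}_{0,k}(Q,d)$, and $r_\mu^*W_\mu\dual\cong V_2$, $r_\mu^*\alpha=\xi_2$; in particular $\bar{\sigma}_2=\bar{\gamma}^*\xi_2=\nu_\mu^*\alpha$ and $\bar{V}^{p}_{2}=\nu_\mu^*W_\mu\dual$ along the $\mu$-component, where $\nu_\mu=r_\mu\circ\bar{\gamma}$. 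By the functoriality of cosection-localized Gysin maps under proper pushforward \cite{KL13cosec} (already invoked there) we have
\[
(r_\mu)_*\bar{\gamma}_*\Gysin{\bar{V}^{p}_{2},\bar{\sigma}_2}(B^\mu_1)=(\nu_\mu)_*\Gysin{\bar{V}^{p}_{2},\bar{\sigma}_2}(B^\mu_1)=\Gysin{W_\mu\dual,\alpha}\bigl((\nu'_\mu)_*(B^\mu_1)\bigr),
\]
with $\nu'_\mu\colon|\bar{V}^{p}_{2}|\to|W_\mu\dual|$ the bundle map over $\nu_\mu$. So it suffices to evaluate $(\nu'_\mu)_*(B^\mu_1)$ and then apply $\Gysin{W_\mu\dual,\alpha}$.

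First I would identify $(\nu'_\mu)_*(B^\mu_1)$ with a multiple of the fundamental class. By Proposition~\ref{conesupp} the cycle $B^\mu_1$ is supported on $\wtil{\cM}^{p,rat}$, which is $|A_2|=\PP\setminus D_\infty$ and hence lies in the zero section of $|\bar{V}^{p}_{2}|\to\PP$; since $\nu'_\mu$ carries the zero section to the zero section, $(\nu'_\mu)_*(B^\mu_1)$ is supported on the zero section $X_\mu\hookrightarrow|W_\mu\dual|$. As computed in the proof of Proposition~\ref{vanishing} it lies in $A_{(n+1)d+k+m}(X_\mu)$, while $\dim X_\mu=(n+1)d+(n-3)+k$; because $Q$ is a $3$-fold we have $m=n-3$, so these two numbers agree. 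As $X_\mu\cong\bar{M}_{0,k}(\PP^n,d)$ is irreducible of this dimension, $A_{(n+1)d+k+m}(X_\mu)_\QQ=\QQ\cdot[X_\mu]$, hence $(\nu'_\mu)_*(B^\mu_1)=c\,[X_\mu]$ for a unique $c\in\QQ$.

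It remains to compute $\Gysin{W_\mu\dual,\alpha}[X_\mu]$. In genus $0$ one has $\RR^1\pi_*ev^*\cO_{\PP^n}(1)^{\oplus(n+1)}=0$, and relative Serre duality gives $\RR^0\pi_*\bigl(\bigoplus_i ev^*\cO_{\PP^n}(-\deg f_i)\otimes\omega_\pi\bigr)=0$ and $\RR^1\pi_*\bigl(\bigoplus_i ev^*\cO_{\PP^n}(-\deg f_i)\otimes\omega_\pi\bigr)\cong W_\mu\dual$; consequently $\bar{M}_{0,k}(\PP^n,d)$ is smooth with $[X_\mu]=[\bar{M}_{0,k}(\PP^n,d)]^{vir}$, the moduli space with fields $\bar{M}_{0,k}(\PP^n,d)^p$ coincides with $\bar{M}_{0,k}(\PP^n,d)$, its obstruction sheaf (relative to $\fB$) is $W_\mu\dual$, and the cosection \eqref{CoSection} (with the $p$-field identically zero) is exactly $\alpha$. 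Therefore $\Gysin{W_\mu\dual,\alpha}[X_\mu]=[\bar{M}_{0,k}(\PP^n,d)^p]\virtloc$, which by \eqref{KO18} at $g=0$ equals $(-1)^{d\sum_i\deg f_i+m}[\bar{M}_{0,k}(Q,d)]^{vir}$. Combining the three identities gives
\[
(r_\mu)_*\bar{\gamma}_*\Gysin{\bar{V}^{p}_{2},\bar{\sigma}_2}(B^\mu_1)=(-1)^{d\sum_i\deg f_i+m}\,c\,[\bar{M}_{0,k}(Q,d)]^{vir},
\]
which is the assertion (the value of $c$ is determined subsequently). The main obstacle is the second step: making rigorous that $(\nu'_\mu)_*(B^\mu_1)$ is genuinely a top-dimensional cycle class on the \emph{irreducible} zero section $X_\mu$ — this rests on controlling $\Supp B^\mu_1$ through Proposition~\ref{conesupp} together with the exact dimension matching peculiar to the $3$-fold case — after which irreducibility of $\bar{M}_{0,k}(\PP^n,d)$ forces proportionality to $[X_\mu]$; the concluding identification is then a standard genus-$0$ quantum-Lefschetz-with-fields computation.
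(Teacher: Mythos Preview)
Your proof is correct and follows essentially the same approach as the paper: push $B^\mu_1$ forward along $\nu'_\mu$, use irreducibility of $\bar{M}_{0,k}(\PP^n,d)$ together with the dimension match (special to the $3$-fold case) to get $(\nu'_\mu)_*B^\mu_1=c[\bar{M}_{0,k}(\PP^n,d)]$, and then apply $\Gysin{W_\mu\dual,\alpha}$. The only cosmetic difference is in the last step: the paper identifies $\Gysin{W_\mu\dual,\alpha}[\bar{M}_{0,k}(\PP^n,d)]$ directly as $(-1)^{\rank W_\mu}(\alpha^\vee)^![\bar{M}_{0,k}(\PP^n,d)]=(-1)^{d\sum_i\deg f_i+m}[\bar{M}_{0,k}(Q,d)]\virt$ (the same sign relation used at the end of Section~\ref{redcont}), whereas you route through the genus-$0$ $p$-field interpretation and invoke \eqref{KO18}, which amounts to the same thing.
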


We note that the sign comes from the rank of $W_\mu^\vee$.

\begin{proof}
Since $\bar{M}_{0,k}(\PP^n,d)$ is irreducible by \cite{KP01conn}, we have 
$$(\nu'_\mu)_*B^{\mu}_{1} = c[\bar{M}_{0,k}(\PP^n,d)]$$
by dimension reason.
We apply $0^!_{W_\mu^\vee, \alpha}$ on both side.
The left hand side is 
$$0^!_{W_\mu^\vee, \alpha} ((\nu'_\mu)_*B^{\mu}_{1}) 
=(\nu_\mu)_* (0^!_{\bar{V}_2^{p}, \bar{\sigma}_2} (B^{\mu}_{1}))
=(r_\mu)_* (\bar{\gamma})_* (0^!_{\bar{V}_2^{p}, \bar{\sigma}_2} (B^{\mu}_{1}))$$
by the bivariant property of localized Chern characters.
We claim that the right hand side is 
\begin{align*}
c\cdot  0^!_{W_\mu^\vee, \alpha}  [\bar{M}_{0,k}(\PP^n,d)] & = (-1)^{d\sum_i \deg f_i+m} c\cdot  (\alpha^\vee)^!  [\bar{M}_{0,k}(\PP^n,d)]\\
&= (-1)^{d\sum_i \deg f_i+m} c \cdot [\bar{M}_{0,k}(Q,d)]^{vir}.
\end{align*}
\end{proof}

\begin{coro} \label{nonvanishing}
When $Q$ is a $3$-fold and $\mu = \{(d,[k])\}$, we have
\begin{align*} 
\bar{\gamma}_* \Gysin{\bar{V}^{p}_{2}, \bar{\sigma}_2 }(B^{\mu}_{1})\cap ev^*(\alpha)
= (-1)^{d\sum_i \deg f_i+m} c [\bar{M}_{0,k}(Q,d)]\cap ev^*(\alpha)
\end{align*}
where $c \in \QQ$ is the constant introduced in Proposition \ref{Cycle:Equi}.
\end{coro}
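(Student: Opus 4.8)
The plan is to derive Corollary \ref{nonvanishing} directly from Proposition \ref{Cycle:Equi} by the projection formula, the only extra input being the compatibility of the evaluation morphisms with the contraction $r_\mu$ in the distinguished case $\mu=\{(d,[k])\}$. Recall from the proof of Proposition \ref{vanishing} the proper morphism $\nu_\mu=r_\mu\circ\bar{\gamma}$ landing in $X_\mu=\bar{M}_{0,k}(\PP^n,d)$. The key point is that for $\mu=\{(d,[k])\}$ all $k$ marked points lie on the single genus $0$ bubble recorded by $r_\mu$, so that the evaluation morphisms factor as the composition of $\nu_\mu$ (resp.\ of $r_\mu$) with the evaluation morphisms of $\bar{M}_{0,k}(\PP^n,d)$; this identity of morphisms holds not just on $\PP$ but on each closed substack supporting the cycles $R^{\mu}_{1}$, $B^{\mu}_{1}=\Gysin{\bar{V}^{p}_{1}}[R^{\mu}_{1}]$ and $\Gysin{\bar{V}^{p}_{2},\bar{\sigma}_2}(B^{\mu}_{1})$.

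Next I would restrict to the cosection degeneracy locus $Z(\bar{\sigma}_2)$, whose image under $\bar{\gamma}$ (resp.\ under $\nu_\mu$) is $\wtil{\cM}^{rat}_Q$ (resp.\ $\bar{M}_{0,k}(Q,d)$, as $Z(\alpha)=\bar{M}_{0,k}(Q,d)$ when $\mu=\{(d,[k])\}$) and on which the evaluations take values in $Q$. There the factorization above exhibits $ev^*(\alpha)=\prod_j ev_j^*(\alpha_j)$ on $\wtil{\cM}^{rat}_Q$ as the pullback along $r_\mu$ of the corresponding class $ev^*(\alpha)$ on $\bar{M}_{0,k}(Q,d)$, for any $\alpha\in H^*(Q)^k$. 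Applying the projection formula to the proper morphism $r_\mu$ then yields
\[
(r_\mu)_*\Big(\bar{\gamma}_*\Gysin{\bar{V}^{p}_{2},\bar{\sigma}_2}(B^{\mu}_{1})\cap ev^*(\alpha)\Big)=\Big((r_\mu)_*\bar{\gamma}_*\Gysin{\bar{V}^{p}_{2},\bar{\sigma}_2}(B^{\mu}_{1})\Big)\cap ev^*(\alpha),
\]
and Proposition \ref{Cycle:Equi} identifies the bracket on the right with $(-1)^{d\sum_i\deg f_i+m}c\,[\bar{M}_{0,k}(Q,d)]^{vir}$. Hence the right-hand side equals $(-1)^{d\sum_i\deg f_i+m}c\,[\bar{M}_{0,k}(Q,d)]^{vir}\cap ev^*(\alpha)$, which is the right-hand side of the corollary (the class written $[\bar{M}_{0,k}(Q,d)]$ there being the virtual one, so that its pairing against $ev^*(\alpha)$ is $GW_{0,d}(\alpha)$). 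Since $ev^*(\alpha)$ has complementary degree --- all cycles in play live in $A_{c_1(T_Q)\cap d+k}$, the virtual dimension of $\bar{M}_{0,k}(Q,d)$ --- both sides are $0$-cycles whose degrees are all that enter \eqref{A:rat}, so the tacit $(r_\mu)_*$ causes no loss.

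The step I expect to be the real work is the factorization claim: verifying on the nose that the evaluation morphisms factor through $r_\mu$ on all the relevant --- possibly non-reduced and stacky --- loci, and that this is genuinely compatible with the cosection localization defining $\Gysin{\bar{V}^{p}_{2},\bar{\sigma}_2}$, i.e.\ that passing from $\PP$ to $Z(\bar{\sigma}_2)$ and then to $\bar{M}_{0,k}(Q,d)$ commutes with pulling back the classes $\alpha$. Everything else is a formal application of the projection formula together with Proposition \ref{Cycle:Equi}.
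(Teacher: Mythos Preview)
Your approach is correct and is essentially what the paper intends: the corollary carries no separate proof in the paper and is meant to follow directly from Proposition \ref{Cycle:Equi} by the projection formula, using precisely the factorization of the evaluation maps through $r_\mu$ that you identify (for $\mu=\{(d,[k])\}$ one has $k_0=0$, so all marked points sit on the single rational tail and the factorization is immediate). Your observation that both sides are $0$-cycles, so the tacit $(r_\mu)_*$ is harmless, matches the paper's convention in the analogous chain of equalities at the end of the proof of Proposition \ref{vanishing}.
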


Next, we compute the constant $c$. We follow the idea in \cite[Section 9]{CL15}. 
We fix a $k$ marked points $x_1,\dots,x_k \in \PP^1$ and a degree $d$ regular embedding $g : \PP^1 \to \PP^n$. 
Let $R =\bar{M}_{1,1} \times \PP^1$  where $\bar{M}_{1,1}$ is the moduli space of genus one stable curves with one marked point. 
There is a closed embedding $\psi_g : R \to \bar{M}_{1,k}(\PP^n,d)$ defined by the following: 
For $((E,a),b ) \in R$, let $C$ be an elliptic curve with $k$ marked points obtained by gluing $E$ and $(\PP^1,x_1,\dots,x_k)$ which identifies $a$ and $b$, and take stabilization if $b$ is equal to one of $x_1,\dots,x_k$. 
We write $C$ as $C=E\cup T$, where 
$T$ is a genus $0$ nodal curve. 
If $b$ is not equal to one of $x_1,\dots,x_k$, then $T = \PP^1$ and if not, then $T = \PP^1 \cup \PP^1$.
For latter case, we denote $(\PP^1)_1$ the bridge $\PP^1$ component in $T$ and $(\PP^1)_2$ the other component.
We define a morphism $g_C : C \to \PP^1$ such that $g_C|_E$ and $g_C|_{(\PP^1)_1}$ (if exists) are constant and $g|_{\PP^1} \cong g$ (or $g|_{(\PP^1)_2} \cong g$), where $\PP^1$ is a component in $C$ containing $k$ ordered points, which are equal to $(x_1,\dots,x_k)$ or one of $x_i$ replaced to the node point of the $\PP^1$. 
Here, $g|_{\PP^1} \cong g$ means isomorphism up to automorphism of $\PP^1$ preserving $k$ ordered points. 
We define $\psi_g((E,a),b) = (C,g_C)$.

We can consider $R$ as a closed substack of $\bar{M}_{1,k}(\PP^n,d)$. 
Let $\pi: \cC_R \to R$ be a universal curve and let $ev : \cC_R \to \PP^n$ be a universal morphism. 
Let $L_{R,i}:=(\pi)_*(ev^*\cO(-\deg f_i) \otimes \omega_{\pi})$. 
Then $L_{R,i}$ are line bundles. 
We define a rank $m$ vector bundle $A_R := \oplus^m_{i=1} L_{R,i}$. 
Let $\PP_R :=  \PP(A_{R} \oplus \cO_R)$. 
Let $D_R := \PP(A_R \oplus 0)$ be a divisor at infinity. 
Let $\bar{\gamma}_R : \PP_R \to R$ be the projection.

Consider two projections $p_1 : R \to \bar{M}_{1,1}$ and $p_2 : R \to \PP^1$. Let $\cH = (\pi_{\bar{M}_{1,1}})_*(\omega_{\cC_{\bar{M}_{1,1}}/\bar{M}_{1,1}})$ be the Hodge bundle of $\bar{M}_{1,1}$. 
Following the idea in \cite[Section 9]{CL15}, we can check that 
\begin{align}\label{coeff}
c = \deg \left( e(\bar{\gamma}^*(p_1^* \cH^\vee \otimes p_2^*N_{g(\PP^1)/\PP^n} )(-D_R))\right)
\end{align}
where $e(-)$ stands for the Euler number.

Let $\alpha = 24 \cdot c_1(\cH) \in A^1(\bar{M}_{1,1})$ and $\beta = c_1(\cO_{\PP^1}(1)) \in A^1(\PP^1)$. The total Chern class of $N_{g(\PP^1)/\PP^n}$ is equal to $1 + ((n+1)d-2)\beta$. 
We set $\bar{\alpha} = \bar{\gamma}^*p_1^*\alpha$ and $\bar{\beta} = \bar{\gamma}^*p_2^*\beta$. 
Let $F \in A^2(\PP_R)$ be the cohomology class which is the Poincar\'e dual of the fiber of the projective bundle. 
We have $F = \bar{\alpha} \cdot \bar{\beta}$, $\bar{\alpha}^2 = \bar{\beta}^2 = 0$. 
Now, we obtain 
\begin{align*}
& c(\bar{\gamma}^*(p_1^*\cH\dual \otimes p_2^*N_{g(\PP^1)/\PP^n} )) \\
 & = 1 + ((n+1)d-2)\bar{\beta} - \frac{(n-1)}{24}\bar{\alpha} - \frac{n-2}{24}((n+1)d-2)F
\end{align*}
where $c$ stands for the total Chern class. 
Therefore we have:
\begin{align*}
 e (\bar{\gamma}^* & ( p_1^*\cH\dual \otimes p_2^*N_{g(\PP^1)/\PP^n}  )(-D_R)) \\
  = & [-D_R]^{n-1} + \l(((n+1)d-2)\bar{\beta}-\frac{n-1}{24}\bar{\alpha}\r)[-D_R]^{n-2} \\
  &- \frac{n-2}{24}((n+1)d-2)F\cdot[-D_R]^{n-3}.
\end{align*}
By the definition of Segre class, we have :
\begin{align*}
 \deg \left( e \right. & \left. (\bar{\gamma}^*  (p_1^*\cH\dual \otimes p_2^*N_{g(\PP^1)/\PP^n}  )(-D_R)) \right) \\
 = & \deg  \l( [-D_R]^{n-1}  + \l(((n+1)d-2)\bar{\beta}-\frac{n-1}{24}\bar{\alpha}\r)[-D_R]^{n-2} \right. \\
&-  \left. \frac{n-2}{24}((n+1)d-2)F\cdot[-D_R]^{n-3} \r)  \\
 = &  \deg \  \bar{\gamma}_* \l([-D_R]^{n-1} + \l(((n+1)d-2)\bar{\beta}-\frac{n-1}{24}\bar{\alpha}\r)[-D_R]^{n-2}  \right. \\
 &  \left. - \frac{n-2}{24}((n+1)d-2)F\cdot[-D_R]^{n-3} \r) \\
 = & (-1)^{n-1}s_2(A_R) + (-1)^{n-2}\l(((n+1)d-2)p_2^*\beta - \frac{n-1}{24}p_1^*\alpha \r)\cdot s_1(A_R)\\
& - (-1)^{n-3}\frac{n-2}{24}((n+1)d-2) \cdot s_0(A_R). 
\end{align*}
Here, $s_i(-)$ stands for the $i$-th Segre class.
We recall that $A_R=\oplus^m_{i=1} L_{R,i}$ and $L_{R,i}\cong p_1^*\cH \otimes p_2^*g^*\cO(-\deg f_i)$. 
Thus we have the total Segre class 
$$s(L_{R,i}) = 1 - \frac{1}{24}p_1^*\alpha + \deg f_i \cdot d \cdot p_2^*\beta - \frac{\deg f_i \cdot d}{12}[pt].$$ 
By the Whitney sum formula for Segre classes, we have:
\[ s(N_R) = \prod\limits^m_{i=1}(1 - \frac{1}{24}p_1^*\alpha + \deg f_i \cdot d \cdot p_2^*\beta - \frac{\deg f_i \cdot d}{12}[pt]).
\]
Therefore we obtain:
\begin{align*}
s_2(N_R) & = -\frac{m+1}{24}\l(\sum_i \deg f_i \r)\cdot d[pt], \\   
s_1(N_R) & =-\frac{m}{24}p_1^*\alpha + \l(\sum_i \deg f_i \r)d \cdot p_2^*\beta.
\end{align*}
Recall that we are considering the case $n-m=3$. Thus, we have 
\begin{align*}
\deg \left( e \right. & \left. (\bar{\gamma}^*  (p_1^*\cH\dual \otimes p_2^*N_{g(\PP^1)/\PP^n} )  (-D_R)) \right) \\
& =  (-1)^n \l( \frac{n-m-2}{24}((n+1)d-2) - \l( \sum_i \deg f_i \r)d\cdot \frac{n-m-2}{24}  \r) \\
& = (-1)^{n+1}\l(\frac{1}{12} - \frac{(n+1 - \sum_i \deg f_i)\cdot d}{24} \r) \\
& = (-1)^{n+1}\l( \frac{2 - c_1(T_Q)\cdot d[line]}{24} \r)
\end{align*}
where $[line]$ is the homology class of a projective line in $\PP^n$.
By \eqref{coeff}, we have
\begin{align} \label{COEF}
c = (-1)^{n+1}\l( \frac{2 - c_1(T_Q)\cdot d[line]}{24} \r).
\end{align}

The equation \eqref{A:rat} is obtained by \eqref{DEF:A}, \eqref{middle:summary}, Section \S 5.0.1, Proposition \ref{vanishing}, Corollary \ref{nonvanishing} and \eqref{COEF} up to sign $(-1)^{n+1+m} = (-1)^{4+2m}=1$.

\bibliographystyle{plain}

\end{document}